\renewcommand{\baselinestretch}{1.80}
\numberwithin{equation}{section}
\newtheorem*{assumption*}{\assumptionnumber}
\providecommand{\assumptionnumber}{}
\newenvironment{assumption}[2]
{%
	\renewcommand{\assumptionnumber}{Assumption $\mathcal{#1}$.#2}%
	\begin{assumption*}%
		\protected@edef\@currentlabel{$\mathcal{#1}$.#2}%
	}
	{%
	\end{assumption*}
}
\theoremstyle{plain}
\newtheorem{thm}{Theorem}[section]
\newtheorem{lem}[thm]{Lemma}
\newtheorem{coro}[thm]{Corollary}
\theoremstyle{definition}
\newtheorem{example}{Example}[section]
\newtheorem{rem}{Remark}[section]
\newcommand{\ignore}[1]{}
\begin{document}

\title{\vspace*{-2cm} Asymptotic Spectral Theory for Spatial Data}

\author{
Wai Leong Ng\thanks{Department of Mathematics, Statistics and Insurance, School of Decision Sciences, The Hang Seng University of Hong Kong, Shatin, New Territories, Hong Kong. Email: {\ttfamily wlng@hsu.edu.hk}}\\
The Hang Seng University of Hong Kong
\and
Chun Yip Yau\thanks{Department of Statistics, The Chinese University of Hong Kong, Shatin, New Territories, Hong Kong.  Email: {\ttfamily cyyau@sta.cuhk.edu.hk}
} \\
The Chinese University of Hong Kong
\and 
}
\maketitle

\begin{abstract}
In this paper we study the asymptotic theory for spectral analysis of stationary random fields on $\mathbb{Z}^2$, including linear and nonlinear fields. Asymptotic properties of Fourier coefficients and periodograms, including limiting distributions of Fourier coefficients, and the uniform consistency of kernel spectral density estimators are obtained under various mild conditions on moments and dependence structures. The validity of the aforementioned asymptotic results for estimated spatial fields is also established.
	
\bigskip

\noindent\textbf{Mathematics Subject Classification:} 62M15, 62E20, 62M30.

\noindent\textbf{Keywords and phrases:} Fourier coefficients, geometric moment contraction, periodograms, 
spectral analysis, spectral density estimation.
\end{abstract}

\section{Introduction}

In a wide variety of disciplines, such as digital imaging, ecology, earth sciences, astronomy and meteorology, analysis of spatial data 
plays an important role. In particular, random field theory provides a theoretical modeling framework for statistical inference on spatial data, see \cite{cressie1993} and \cite{stein1999interpolation}. One popular approach for modeling and inferring spatial dependence is to consider autocovariance functions in spatial domain. Another approach for modeling spatial dependence is using spectral analysis in frequency domain. 
The primary goals of spectral analysis include estimating the spectral density function and establishing asymptotic properties of Fourier coefficients and periodograms,  
which are relevant for many applications, e.g., frequency domain bootstrap methods, specification and testing of parametric models, detecting anisotropies, signal extraction, interpolation, prediction, and smoothing; see, e.g., \cite{vidal2009automatic,YR1993}. There are two main types of estimators in nonparametric spectral density estimation for random fields, namely smoothed periodogram estimators and lag-windowed estimators, and they are closely related since the smoothed periodogram estimator can be viewed as a numerical integration approximation to the lag-windowed estimator;   
see \cite{heyde1993smoothed,ivanov1986,leonenko1999limit,priestley1964analysis,RM1984,vidal2009automatic}. 

Recently, there has been an increased interest in nonlinear random fields, see \cite{EM2013,LL2007,PD2017,HL2004}. 
However, existing asymptotic spectral theories of random fields often require linearity, summability of joint cumulants functions, strong mixing property or Gaussianity.
For example, \cite{GA2018} proposed a truncated autoregressive spectral density estimator and discussed its asymptotic properties under linearity; \cite{VG1989} established the uniform convergence of spectral density estimators 
on a homogeneous Gaussian random field; \cite{vidal2009automatic} constructed a lag-windowed estimator and a smoothed periodogram estimator under the linearity assumption, and proved the uniform convergence of the estimators. 
In this paper, we focus on the spectral theory for stationary random fields on $\mathbb{Z}^2$, and study the asymptotic properties of Fourier coefficients and periodograms, and establish the uniform consistency property of kernel spectral density estimators of random fields under mild moment and dependence structure conditions, which are applicable to a variety of nonlinear and non-Gaussian random fields.



The rest of the paper is organized as follows. In Section \ref{sec:setting}, we describe the setting and background on random fields on $\mathbb{Z}^2$. 
Section \ref{sec:assp} discusses the assumptions on the weak dependence structures of the random fields, and the kernel functions used in the spectral density estimation, which are required for establishing the main results. 
In Section \ref{sec:asymp_fourier}, we establish the asymptotic normality of the Fourier coefficients and the asymptotic behaviors of the Fourier coefficients and periodograms. Section \ref{sec:unf_kern} considers the uniform consistency of the kernel spectral density estimators. Section \ref{sec:asymp_estimated} establishes the asymptotic results for estimated spatial fields. Proofs and technical details are given in Section \ref{proof}.

\section{Setting and Preliminary}\label{sec:setting}
In this section, we describe the basic settings and preliminary about the random fields of interest. First, we introduce some notations. 
For any vector $\mathbf{a}=(a_1,a_2,\ldots,a_q) \in \mathbb{R}^q$, denote $|\mathbf{a}|=\prod_{i=1}^q |a_i|$, $\|\mathbf{a}\|=\max_{i=1,\ldots,q}\{|a_i|\}$ and $\|\mathbf{a}\|_p=(\sum_{i=1}^q |a_i|^p)^{1/p}$ for $p \geq 1$. 
Let $T=\{(t_1,t_2)\in\mathbb{Z}^2, 1 \leq t_k \leq d_k, k=1,2\}$ be a spatial rectangular lattice, and denote $\mathbf{d}_T=(d_1,d_2) \in \mathbb{N}^2$. 
For any set $G$, denote the cardinality of $G$ by $|G|$. For any random variable $X \in \mathbb{L}^p$, denote the $\mathbb{L}^p$ norm as $\|X\|_p=(\mathbb{E}(|X|^p))^{1/p}$. For any two real sequences $\{a_n\}$ and $\{b_n\}$, denote by $a_n \asymp b_n$ when $a_n = O(b_n)$ and $b_n = O(a_n)$, and by $a_n \sim b_n$ when $a_n/b_n \rightarrow 1$. For any $x\in\mathbb{R}$, $\lfloor x \rfloor$ is the greatest integer that is less than or equal to $x$. All vectors are column vectors unless specified otherwise, hence for any $\mathbf{a}=(a_1,a_2,\ldots,a_q) \in \mathbb{R}^q$ and $\mathbf{b}=(b_1,b_2,\ldots,b_q) \in \mathbb{R}^q$, the dot product between vectors $\mathbf{a}$ and $\mathbf{b}$ is defined as the vector multiplication $\mathbf{a}'\mathbf{b}=\mathbf{b}'\mathbf{a}=\sum_{i=1}^q a_ib_i$.

Let $\left\{ V(\mathbf{t}):\mathbf{t}\in \mathbb{Z}^2\right\}$ be a stationary random field on a two-dimensional grid with mean $\mu=\mathbb{E}(V(\mathbf{0}))$. Assume that we have observed $\{V(\mathbf{t}): \mathbf{t} \in T\}$ on a rectangular lattice $T=\{(t_1,t_2)\in\mathbb{Z}^2, 1 \leq t_k \leq d_k, k=1,2\}$ with $\mathbf{d}_T=(d_1,d_2) \in \mathbb{N}^2$ and cardinality $|T|=d_1d_2$. Throughout the paper, $T \rightarrow \infty$ denotes both $d_1$, $d_2 \rightarrow \infty$. 

\subsection{Fourier Coefficients for Spatial Data}
Define the Fourier coefficients $x(\mathbf{j})$ and $y(\mathbf{j})$ as
\begin{equation}
x(\mathbf{j}) = \frac{1}{\sqrt{|T|}}\sum_{\mathbf{t}\in{T}}V(\mathbf{t})\text{cos}(-\lambda_\mathbf{j}'\mathbf{t})\,, \quad		
y(\mathbf{j}) = \frac{1}{\sqrt{|T|}}\sum_{\mathbf{t}\in{T}}V(\mathbf{t})\text{sin}(-\lambda_\mathbf{j}'\mathbf{t})\,, \label{eq:a}
\end{equation}
for $\mathbf{j}=(j_1,j_2)\in T$ and $\lambda_\mathbf{j}=\left(\frac{2{\pi}j_1}{d_1}, \frac{2{\pi}j_2}{d_2}\right)$. Note that the Fourier coefficients $x(\mathbf{j})$, $y(\mathbf{j})$ depend on $T$, but to keep the notation simple we suppress this dependence. From the definition of the Fourier coefficients, we have
\begin{equation}\label{symprop}
x(\mathbf{d}_T-\mathbf{j})=x(\mathbf{j}),\ y(\mathbf{d}_T-\mathbf{j})=-y(\mathbf{j})\,.
\end{equation}

By using the symmetry property in \eqref{symprop}, we now partition $T$ as $T=N\cup\tilde{N}\cup M$ such that the Fourier coefficients defined on $\tilde{N}$ are determined by the Fourier coefficients defined on $N$.  
Also, the information about the covariance structure and mean of the random field are contained in $N$ and $M$ respectively. Hence, a spatial lattice can be reconstructed from the Fourier coefficients defined on $N$ and $M$.
In particular, 
when $d_1, d_2$ are both odd, define
$$N=\left\{(t_1,t_2): 1 \leq t_1 \leq d_1,~1 \leq t_2 \leq \frac{d_2-1}{2} \right\} \cup \left\{(t_1,t_2): 1 \leq t_1 \leq \frac{d_1-1}{2},~ t_2 = d_2 \right\}.$$
When $d_1$ is odd and $d_2$ is even (similar for $d_1$ is even and $d_2$ is odd), define 
$$N=\left\{(t_1,t_2): 1 \leq t_1 \leq d_1,~1 \leq t_2 \leq \frac{d_2}{2}-1 \right\} \cup \left\{(t_1,t_2): 1 \leq t_1 \leq \frac{d_1-1}{2},~ t_2 = \frac{d_2}{2}, d_2 \right\}.$$
When $d_1,d_2$ are both even, define 
$$N=\left\{(t_1,t_2): 1 \leq t_1 \leq d_1,~1 \leq t_2 \leq \frac{d_2}{2}-1 \right\} \cup \left\{(t_1,t_2): 1 \leq t_1 \leq \frac{d_1}{2}-1,~ t_2 = \frac{d_2}{2}, d_2 \right\}.$$
Then, using the symmetry in \eqref{symprop}, define the subset $\tilde{N}$ of $T$ as 
\begin{equation*}
\tilde{N}=\left\{(t_1,t_2)\in T: t_i= \left\{\begin{array}{cc} d_i-s_i\,, & \mbox{ if $s_i<d_i$}  \\
d_i\,,       & \mbox{ if $s_i=d_i$} 
\end{array}\right. , \mbox{for $i=1,2$}\,, (s_1,s_2)\in N\right\}\,.
\end{equation*}
Note that the Fourier coefficients at $\mathbf{j} \in \tilde{N}$ can be completely determined by the Fourier coefficients at $\mathbf{j}\in N$. From \eqref{eq:a}, for all $c \in \mathbb{R}$, the Fourier coefficients of $\{V(\mathbf{t})-c: \mathbf{t}\in T\}$ at $\mathbf{j} \in N$ are the same.  In other words, the Fourier coefficients in $N$ and $\tilde{N}$ are invariant under additive constants and thus contain no information about the mean. 
In contrast, all of the information about the mean is contained in the Fourier coefficients $x(\mathbf{j})$ for $\mathbf{j}\in M$, where 
\begin{equation}\label{eq:M}
M=\left\{ \begin{array}{cc} \{ (d_1,d_2)\} & \mbox{, when $d_1,d_2$ are odd\,,}  \\
\{ (d_1,d_2), (d_1/2,d_2)\}          & \mbox{, when only $d_1$ is even\,,} \\
\{ (d_1,d_2), (d_1,d_2/2)\}          & \mbox{, when only $d_2$ is even\,,} \\
\{ (d_1,d_2), (d_1/2,d_2),(d_1,d_2/2),(d_1/2,d_2/2)\}          & \mbox{, when $d_1,d_2$ are even\,.} 
\end{array}\right.
\end{equation}
Denote the sample mean as $\bar{V}_T=|T|^{-1}\sum_{\mathbf{t}\in{T}}V(\mathbf{t})$.
Table \ref{mean} summarizes the values of the Fourier coefficients in $M$. Figure \ref{symmetry} illustrates the sets $N$ and $M$ in the rectangular lattices with different $d_1$ and $d_2$.

\renewcommand{\baselinestretch}{1}
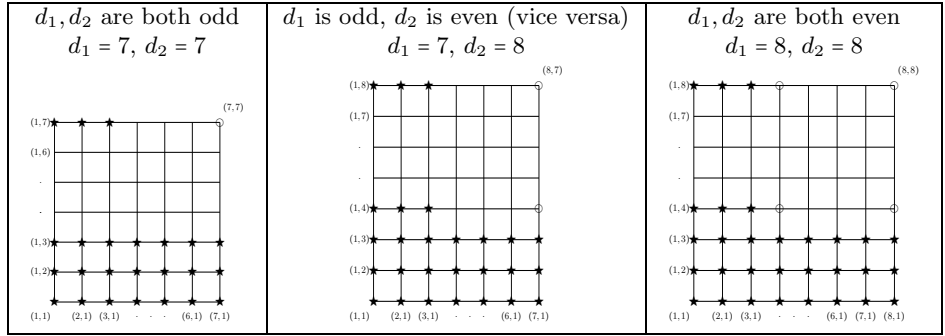
\begin{figure}[hptb]
	\centering \caption{\small {\it Illustration for extracting Fourier coefficients at $\mathbf{j} \in N$. In the grid, $\star$ are coefficients in the set $N$ and $\circ$ are coefficients containing information about the mean in the set $M$.}}
	{\footnotesize
		\begin{tabular}{|c|c|c|}
			\hline
			$d_1,d_2$ are both odd & $d_1$ is odd, $d_2$ is even (vice versa) & $d_1,d_2$ are both even\\
			$d_1=7$, $d_2=7$& $d_1=7$, $d_2=8$& $d_1=8$, $d_2=8$\\ 
			\resizebox{3.0cm}{3.0cm}{
				\begin{tikzpicture}
				\draw[-] (0,0,0) -- (6,0,0);
				\draw[-] (0,0,0) -- (0,6,0); 
				\draw[-] (0,1,0) -- (6,1,0);
				\draw[-] (1,0,0) -- (1,6,0);
				\draw[-] (0,2,0) -- (6,2,0);
				\draw[-] (2,0,0) -- (2,6,0);
				\draw[-] (0,3,0) -- (6,3,0);
				\draw[-] (3,0,0) -- (3,6,0);
				\draw[-] (0,4,0) -- (6,4,0);
				\draw[-] (4,0,0) -- (4,6,0);
				\draw[-] (0,5,0) -- (6,5,0);
				\draw[-] (5,0,0) -- (5,6,0);
				\draw[-] (0,6,0) -- (6,6,0);
				\draw[-] (6,0,0) -- (6,6,0);
				\node at (-0.5,-0.5,0) {$(1,1)$};
				\node at (-0.5,1,0) {$(1,2)$} node at (-0.5,2,0) {$(1,3)$} node at (-0.5,3,0) {.} node at (-0.5,3.,0) {.} node at (-0.5,4,0) {.} node at (-0.5,5,0) {$(1,6)$} node at (-0.5,6,0) {$(1,7)$};
				\node at (1,-0.5,0) {$(2,1)$} node at (2,-0.5,0) {$(3,1)$} node at (3,-0.5,0) {.} node at (3.5,-0.5,0) {.} node at (4,-0.5,0) {.} node at (5,-0.5,0) {$(6,1)$} node at (6,-0.5,0) {$(7,1)$};
				\node at (6.5,6.5,0) {$(7,7)$};
				\node at (0,0,0) {$\bigstar$} node at (1,0,0) {$\bigstar$} node at (2,0,0) {$\bigstar$} node at (3,0,0) {$\bigstar$} node at (4,0,0) {$\bigstar$} node at (5,0,0) {$\bigstar$} node at (6,0,0) {$\bigstar$};
				\node at (0,1,0) {$\bigstar$} node at (1,1,0) {$\bigstar$} node at (2,1,0) {$\bigstar$} node at (3,1,0) {$\bigstar$} node at (4,1,0) {$\bigstar$} node at (5,1,0) {$\bigstar$} node at (6,1,0) {$\bigstar$};
				\node at (0,2,0) {$\bigstar$} node at (1,2,0) {$\bigstar$} node at (2,2,0) {$\bigstar$} node at (3,2,0) {$\bigstar$} node at (4,2,0) {$\bigstar$} node at (5,2,0) {$\bigstar$} node at (6,2,0) {$\bigstar$};
				\node at (0,6,0) {$\bigstar$} node at (1,6,0) {$\bigstar$} node at (2,6,0) {$\bigstar$} node at (6,6,0) {$\bigcirc$} ;
				\end{tikzpicture}
			} & 
			\resizebox{3.0cm}{3.5cm}{
				\begin{tikzpicture}
				\draw[-] (0,0,0) -- (6,0,0);
				\draw[-] (0,0,0) -- (0,7,0); 
				\draw[-] (0,1,0) -- (6,1,0);
				\draw[-] (1,0,0) -- (1,7,0);
				\draw[-] (0,2,0) -- (6,2,0);
				\draw[-] (2,0,0) -- (2,7,0);
				\draw[-] (0,3,0) -- (6,3,0);
				\draw[-] (3,0,0) -- (3,7,0);
				\draw[-] (0,4,0) -- (6,4,0);
				\draw[-] (4,0,0) -- (4,7,0);
				\draw[-] (0,5,0) -- (6,5,0);
				\draw[-] (5,0,0) -- (5,7,0);
				\draw[-] (0,6,0) -- (6,6,0);
				\draw[-] (6,0,0) -- (6,7,0);
				\draw[-] (0,7,0) -- (6,7,0);
				\node at (-0.5,-0.5,0) {$(1,1)$};
				\node at (-0.5,1,0) {$(1,2)$} node at (-0.5,2,0) {$(1,3)$} node at (-0.5,3,0) {$(1,4)$} node at (-0.5,4,0) {.} node at (-0.5,5,0) {.} node at (-0.5,6,0) {$(1,7)$} node at (-0.5,7,0) {$(1,8)$};
				\node at (1,-0.5,0) {$(2,1)$} node at (2,-0.5,0) {$(3,1)$} node at (3,-0.5,0) {.} node at (3.5,-0.5,0) {.} node at (4,-0.5,0) {.} node at (5,-0.5,0) {$(6,1)$} node at (6,-0.5,0) {$(7,1)$};
				\node at (6.5,7.5,0) {$(8,7)$};
				\node at (0,0,0) {$\bigstar$} node at (1,0,0) {$\bigstar$} node at (2,0,0) {$\bigstar$} node at (3,0,0) {$\bigstar$} node at (4,0,0) {$\bigstar$} node at (5,0,0) {$\bigstar$} node at (6,0,0) {$\bigstar$};
				\node at (0,1,0) {$\bigstar$} node at (1,1,0) {$\bigstar$} node at (2,1,0) {$\bigstar$} node at (3,1,0) {$\bigstar$} node at (4,1,0) {$\bigstar$} node at (5,1,0) {$\bigstar$} node at (6,1,0) {$\bigstar$};
				\node at (0,2,0) {$\bigstar$} node at (1,2,0) {$\bigstar$} node at (2,2,0) {$\bigstar$} node at (3,2,0) {$\bigstar$} node at (4,2,0) {$\bigstar$} node at (5,2,0) {$\bigstar$} node at (6,2,0) {$\bigstar$};
				\node at (0,7,0) {$\bigstar$} node at (1,7,0) {$\bigstar$} node at (2,7,0) {$\bigstar$} node at (6,7,0) {$\bigcirc$};
				\node at (0,3,0) {$\bigstar$} node at (1,3,0) {$\bigstar$} node at (2,3,0) {$\bigstar$} node at (6,3,0) {$\bigcirc$};
				\end{tikzpicture}
			}& 
			\resizebox{3.5cm}{3.5cm}{
				\begin{tikzpicture}
				\draw[-] (0,0,0) -- (7,0,0);
				\draw[-] (0,0,0) -- (0,7,0); 
				\draw[-] (0,1,0) -- (7,1,0);
				\draw[-] (1,0,0) -- (1,7,0);
				\draw[-] (0,2,0) -- (7,2,0);
				\draw[-] (2,0,0) -- (2,7,0);
				\draw[-] (0,3,0) -- (7,3,0);
				\draw[-] (3,0,0) -- (3,7,0);
				\draw[-] (0,4,0) -- (7,4,0);
				\draw[-] (4,0,0) -- (4,7,0);
				\draw[-] (0,5,0) -- (7,5,0);
				\draw[-] (5,0,0) -- (5,7,0);
				\draw[-] (0,6,0) -- (7,6,0);
				\draw[-] (6,0,0) -- (6,7,0);
				\draw[-] (0,7,0) -- (7,7,0);
				\draw[-] (7,0,0) -- (7,7,0);
				\node at (-0.5,-0.5,0) {$(1,1)$};
				\node at (-0.5,1,0) {$(1,2)$} node at (-0.5,2,0) {$(1,3)$} node at (-0.5,3,0) {$(1,4)$} node at (-0.5,4,0) {.} node at (-0.5,5,0) {.} node at (-0.5,6,0) {$(1,7)$} node at (-0.5,7,0) {$(1,8)$};
				\node at (1,-0.5,0) {$(2,1)$} node at (2,-0.5,0) {$(3,1)$} node at (3,-0.5,0) {.} node at (3.5,-0.5,0) {.} node at (4,-0.5,0) {.} node at (5,-0.5,0) {$(6,1)$} node at (6,-0.5,0) {$(7,1)$} node at (7,-0.5,0) {$(8,1)$};
				\node at (7.5,7.5,0) {$(8,8)$};
				\node at (0,0,0) {$\bigstar$} node at (1,0,0) {$\bigstar$} node at (2,0,0) {$\bigstar$} node at (3,0,0) {$\bigstar$} node at (4,0,0) {$\bigstar$} node at (5,0,0) {$\bigstar$} node at (6,0,0) {$\bigstar$} node at (7,0,0) {$\bigstar$};
				\node at (0,1,0) {$\bigstar$} node at (1,1,0) {$\bigstar$} node at (2,1,0) {$\bigstar$} node at (3,1,0) {$\bigstar$} node at (4,1,0) {$\bigstar$} node at (5,1,0) {$\bigstar$} node at (6,1,0) {$\bigstar$} node at (7,1,0) {$\bigstar$};
				\node at (0,2,0) {$\bigstar$} node at (1,2,0) {$\bigstar$} node at (2,2,0) {$\bigstar$} node at (3,2,0) {$\bigstar$} node at (4,2,0) {$\bigstar$} node at (5,2,0) {$\bigstar$} node at (6,2,0) {$\bigstar$} node at (7,2,0) {$\bigstar$};
				\node at (0,7,0) {$\bigstar$} node at (1,7,0) {$\bigstar$} node at (2,7,0) {$\bigstar$} node at (3,7,0) {$\bigcirc$} node at (7,7,0) {$\bigcirc$};
				\node at (0,3,0) {$\bigstar$} node at (1,3,0) {$\bigstar$} node at (2,3,0) {$\bigstar$} node at (3,3,0) {$\bigcirc$} node at (7,3,0) {$\bigcirc$};
				\end{tikzpicture}
			}\\
			\hline
		\end{tabular}
	}\label{symmetry}
\end{figure}

\renewcommand{\baselinestretch}{1}
\begin{table}[hptb]
	\centering \caption{\footnotesize {\it Fourier coefficients in $M$ contain information about the mean.}}
	{\scriptsize
		\begin{tabular}{|c|c|c|c|c|}
			\hline
			$\mathbf{j}$ & $(d_1,d_2)$ & $(d_1/2,d_2)$ & $(d_1,d_2/2)$ & $(d_1/2,d_2/2)$ \\ 
			x($\mathbf{j}$) & $\sqrt{|T|}\bar{V}_T$ & $\frac{1}{\sqrt{|T|}} \sum_{\mathbf{t} \in T} (-1)^{t_1} V(\mathbf{t})$ & $\frac{1}{\sqrt{|T|}} \sum_{\mathbf{t} \in T} (-1)^{t_2} V(\mathbf{t})$ & $\frac{1}{\sqrt{|T|}}  \sum_{\mathbf{t} \in T} (-1)^{t_1+t_2} V(\mathbf{t})$\\
			y($\mathbf{j}$) & 0 & 0 & 0 & 0 \\
			\hline
		\end{tabular}
	}\label{mean}
\end{table}

\subsection{Kernel Spectral Density Estimators for Spatial Data}

Kernel estimator is introduced by \cite{RM1956} and has been studied extensively in nonparametric estimation of probability density and spectral density functions. Similar to \cite{vidal2009automatic} and \cite{RM2006}, 
we consider the following kernel spectral density estimator 
\begin{equation}\label{kernelesti}
\widehat{f}_T(\mathbf{\lambda})=\frac{1}{4 \pi^2}\sum_{\mathbf{j} \in\mathbb{Z}^2} \left(\frac{K\left(\frac{\lambda_1-\lambda_{j1}}{{h_T}_1},\frac{\lambda_2-\lambda_{j2}}{{h_T}_2}\right)}{\sum_{\mathbf{k} \in\mathbb{Z}^2}K\left(\frac{\lambda_{k1}}{{h_T}_1}, \frac{\lambda_{k2}}{{h_T}_2}\right)}\right)I(\mathbf{j})\,, \ \ \text{for} \ \lambda = (\lambda_1,\lambda_2) \in [0, 2\pi]^2\,,
\end{equation}
where $K(\cdot)$ is the kernel function, $\lambda_\mathbf{j}=(\lambda_{j1},\lambda_{j2})=\left(\frac{2 \pi j_1}{d_1}, \frac{2 \pi j_2}{d_2}\right)$ for $\mathbf{j}=(j_1,j_2)\in \mathbb{Z}^2$, and similarly, $\lambda_\mathbf{k}=(\lambda_{k1},\lambda_{k2})=\left(\frac{2 \pi k_1}{d_1}, \frac{2 \pi k_2}{d_2}\right)$ for $\mathbf{k}=(k_1,k_2)\in \mathbb{Z}^2$, $\mathbf{h}_T=({h_T}_1,{h_T}_2) \in \mathbb{R}^2$ is the bandwidth satisfying ${h_T}_k \rightarrow 0$, ${h_T}_1 \asymp {h_T}_2$ and  ${h_T}_kd_k \rightarrow \infty$ for $k=1,2$, and 
\begin{eqnarray}\label{periodogram_I}
	I(\mathbf{j})=\left\{ \begin{array}{ll}
		0\,, & \mbox{if \ } \mathbf{j}\in D= \{(c_1d_1,c_2d_2): (c_1,c_2) \in \mathbb{Z}^2\}\,, \\
		x^2(\mathbf{j})+y^2(\mathbf{j})\,, & \mbox{if \ } \mathbf{j}\in \mathbb{Z}^{2}\backslash D\,,
	\end{array}
	\right.
\end{eqnarray}
is the periodogram at frequency $\lambda_\mathbf{j}=\left(\frac{2 \pi j_1}{d_1}, \frac{2 \pi j_2}{d_2}\right)$. The periodogram can be set to $0$ on $D$ since it only contains information about the mean.	Note that it is asymptotically equivalent to the usual smoothed periodogram estimators and lag-windowed estimators in the literature, see Lemma \ref{lemequivalent} in Section \ref{proof}. For more details about smoothed periodogram estimators and lag-windowed estimators for spectral density functions, see \cite{heyde1993smoothed,priestley1964analysis,RM2006,RM1984,vidal2009automatic}.

\section{Assumptions}\label{sec:assp}
In this section, we impose some assumptions on the underlying random fields as well as kernel functions which are required for establishing the asymptotic results in Sections \ref{sec:asymp_fourier} to \ref{sec:asymp_estimated}. 

\subsection{Assumptions on the underlying random fields}
We impose the following assumptions about the lattice structure and underlying random fields to establish the asymptotic results.

\begin{assumption}{P}{1}\label{asstasymp}
	For all sufficiently large $|T|$, there exist $0 < \xi \leq 1/2$ and $c_1,~c_2 > 0$ such that $d_1 > c_1 |T|^\xi$ and $d_2 > c_2 |T|^\xi$.
\end{assumption}


\begin{assumption}{P}{2}\label{asssummable}
	The random field $\left\{ V(\mathbf{t}):\mathbf{t}\in \mathbb{Z}^2\right\}$ is stationary with absolutely summable auto-covariance function $\gamma(\cdot)$, i.e., $\sum_{\mathbf{j}\in \mathbb{Z}^2}|\gamma(\mathbf{j})| < \infty$, where $\gamma(\mathbf{j})=\text{\rm {Cov}}(V(\mathbf{0}),V(\mathbf{j}))$.
\end{assumption}

Assumption \ref{asssummable} implies that the spectral density of the random field $\left\{ V(\mathbf{t}):\mathbf{t}\in \mathbb{Z}^2\right\}$ exists and can be expressed as
\begin{eqnarray}\label{specden}
f(\mathbf{\lambda})=\frac{1}{4\pi^2}\sum\limits_{\mathbf{j}\in \mathbb{Z}^2}e^{-i\mathbf{\lambda'j}}\gamma(\mathbf{j})\,,
\end{eqnarray}	
where $\mathbf{\lambda}\in [0,2\pi]^2$. Moreover, $f(\cdot)$ is continuous and bounded. See Section 1.2.2 of \cite{GG2010} for details.

\begin{assumption}{P}{3}\label{assspecpostive}
	The spectral density is bounded from below, i.e., $f(\mathbf{\lambda})\geq c$ for some $c>0$ and all $\mathbf{\lambda}\in \left[ 0,2\pi\right]^2$.
\end{assumption}

\begin{assumption}{P}{4($v$)}\label{assautocov}	
	For some $0 < v \leq \frac{1}{2}$, the sample autocovariance function
	\begin{equation}\label{auto_sample}
	R_V(\mathbf{r})=|T|^{-1}\sum_{{\mathbf{j},\mathbf{j}+\mathbf{r}} \in T}(V(\mathbf{j})-\mu)(V(\mathbf{j}+\mathbf{r})-\mu)
	\end{equation}
	satisfies that, uniformly on $\mathbf{r} \in\mathbb{Z}^2$, $|R_V(\mathbf{r})-\mathbb{E}(R_V(\mathbf{r}))|=O_p(|T|^{-v})$.
\end{assumption}


Assumption \ref{asstasymp} asserts that the asymptotic regime requires the sample to be increasing in both directions of space at a polynomial rate with respect to the total number of observations, see Assumption A.1 in \cite{vidal2009automatic} for similar setting. Assumptions \ref{asssummable} and \ref{assspecpostive} are common conditions on the autocovariance function and spectral density for the asymptotic theory of spectral analysis. Assumption \ref{assautocov} is fulfilled for a large class of linear and nonlinear random fields with mild moment and short-range dependence conditions. In particular, Assumption \ref{assautocov} is fulfilled with $v=1/2$ for linear random fields with existing fourth moments.

\begin{assumption}{P}{5($r$)}\label{asslinear}
	The random field $\left\{ V(\mathbf{t}):\mathbf{t}\in \mathbb{Z}^2\right\}$ is a real-valued linear random fields, i.e., $V(\mathbf{j})-\mu=\sum_{\mathbf{s} \in\mathbb{Z}^2}a_\mathbf{s} \varepsilon_{\mathbf{j}-\mathbf{s}}$, where $\{\varepsilon_\mathbf{i}\}_{\mathbf{i} \in\mathbb{Z}^2}$ is an i.i.d. random field with $\mathbb{E}(\varepsilon_{\mathbf{0}})=0$, $\mathbb{E}(\varepsilon_{\mathbf{0}}^{8}) < \infty$ and $\sum_{\mathbf{s} \in\mathbb{Z}^2} |a_\mathbf{s}| |\mathbf{s}|^r < \infty$ for some $r \geq 0$. Also, the innovation $\varepsilon_{\mathbf{0}}$ satisfies Cram\'er's condition: there exist $\delta>0$ and $u_0>0$ such that, for all $u\in\mathbb{R}$ with $|u|>u_0$,
	$|\mathbb{E}\exp(iu\varepsilon_{\mathbf{0}})| \leq 1-\delta$.
\end{assumption}

Note that Assumption \ref{asslinear} with $r \geq 0$ implies Assumptions \ref{asssummable} and \ref{assautocov} with $v=1/2$.

\begin{assumption}{P}{6($p$)}\label{assdepenmeasure}
	For $\mathbf{j} \in\mathbb{Z}^2$,  assume that $V(\mathbf{j})-\mu=G(\varepsilon_{\mathbf{j}-\mathbf{s}}: \mathbf{s} \in\mathbb{Z}^2)$, where $G(\cdot)$ is a measurable function and $\{\varepsilon_\mathbf{i}\}_{\mathbf{i} \in\mathbb{Z}^2}$ is an i.i.d. random field. Let $\{\widetilde{\varepsilon}_\mathbf{i}\}_{\mathbf{i} \in\mathbb{Z}^2}$ be an i.i.d. copy of $\{\varepsilon_\mathbf{i}\}_{\mathbf{i} \in\mathbb{Z}^2}$.  Define the coupled version of $V(\mathbf{j})$ as $\widetilde{V}(\mathbf{j})-\mu=G(\varepsilon ^*_{\mathbf{j}-\mathbf{s}}: \mathbf{s} \in\mathbb{Z}^2)$,
	where 
	\begin{equation*}
	\varepsilon^*_{\mathbf{j}-\mathbf{s}} = \left\{ \begin{array}{lcr}
	\varepsilon_{\mathbf{j}-\mathbf{s}} & \mbox{if} & \mathbf{j}-\mathbf{s} \neq \mathbf{0}\,, \\ 
	{\widetilde{\varepsilon}}_{\mathbf{0}} & \mbox{if} &  \mathbf{j}-\mathbf{s} = \mathbf{0}\,.
	\end{array}\right.
	\end{equation*}
	Assume that there exists some $p > 0$ such that $V(\mathbf{j})$ belongs to $\mathbb{L}^p$ and
	\begin{equation*}
	\Delta_p := \sum_{\mathbf{j} \in \mathbb{Z}^2} \delta_{\mathbf{j},p} := \sum_{\mathbf{j} \in \mathbb{Z}^2} \|V(\mathbf{j})-\widetilde{V}(\mathbf{j})\|_p < \infty\,.
	\end{equation*}
\end{assumption}

Assumption \ref{assdepenmeasure} is the $p$-stable condition for random fields defined in \cite{EM2013} in which central limit theorems and invariance principles are established for a wide class of stationary nonlinear random fields. The next assumption is a geometric-moment contraction (GMC) condition:

\begin{assumption}{P}{7}\label{assgmc}
	Under the notation of Assumption \ref{assdepenmeasure}, 
	define another coupled version of $V(\mathbf{j})$ as $\widetilde{V}^{\dag}(\mathbf{j})-\mu=G(\varepsilon^{\dag}_{\mathbf{j}-\mathbf{s}}: \mathbf{s} \in\mathbb{Z}^2)$,
	where 
	\begin{equation*}
	\varepsilon^{\dag}_{\mathbf{j}-\mathbf{s}} = \left\{ \begin{array}{lcr}
	\varepsilon_{\mathbf{j}-\mathbf{s}} & \mbox{if} & \|\mathbf{s}\|<\|\mathbf{j}\|\,, \\ 
	{\widetilde{\varepsilon}}_{\mathbf{j}-\mathbf{s}} & \mbox{if} &  \|\mathbf{s}\| \geq \|\mathbf{j}\|\,.
	\end{array}\right.
	\end{equation*}
	Assume that there exist $\alpha > 0, C > 0$ and $0 < \rho = \rho(\alpha) < 1$ such that for all $\mathbf{j} \in\mathbb{Z}^2$,
	\begin{equation*}
	\mathbb{E}(|V(\mathbf{j})-\widetilde{V}^{\dag}(\mathbf{j})|^{\alpha}) \leq C \rho^{\|\mathbf{j}\|}\,. 
	\end{equation*}
\end{assumption}
Assumption \ref{assgmc} is the spatial extension of the geometric-moment contraction condition for time series, see \cite{SX2007}. 
This condition is fulfilled for short-range dependent linear random fields with finite variance, and a large class of nonlinear random fields 
such as nonlinearly transformed linear random fields, Volterra fields and nonlinear spatial autoregressive models, see \cite{EM2013} and \cite{deb2017asymptotic}.

%
%


\subsection{Assumptions on the kernel function $K$}

We impose the following mild regularity assumptions on the kernel function $K(\cdot)$.
\begin{assumption}{K}{1}\label{assk1}
	The kernel $K(\cdot)$ is a real, positive, even function with $\int_{\mathbb{R}^2} K(\mathbf{\lambda})d\mathbf{\lambda}=1$ and 
	\begin{equation*}
	\frac{4 \pi^2}{|\mathbf{h}_T||T|} \sum_{\mathbf{j} \in\mathbb{Z}^2}K\left(\frac{2 \pi j_1}{{h_T}_1 d_1},\frac{2 \pi j_2}{{h_T}_2 d_2}\right)=\int_{\mathbb{R}^2} K(\mathbf{\lambda})d\mathbf{\lambda}+o(1)=1+o(1)\,.
	\end{equation*}			
\end{assumption}

\begin{assumption}{K}{2}\label{assk2}
	Assume that	$\sup_{\mathbf{\lambda} \in [0,2 \pi]^2 } |K_\mathbf{h}(\mathbf{\lambda})|=O\left(|\mathbf{h}_T|^{-1}\right)=O\left(({h_T}_1{h_T}_2)^{-1}\right)$,
	where	
	\begin{equation}\label{Kh}	
	K_\mathbf{h}(\mathbf{\lambda})=\frac{1}{|\mathbf{h}_T|} \sum_{\mathbf{j} \in\mathbb{Z}^2}K\left(\frac{\lambda_1+2\pi j_1}{{h_T}_1},\frac{\lambda_2+2\pi j_2}{{h_T}_2}\right)\,.
	\end{equation}
\end{assumption}

\begin{assumption}{K}{3}\label{assk3}
	The kernel $K(\cdot)$ is absolutely integrable, i.e., $\int_{\mathbb{R}^2} |K(\mathbf{\lambda})|d{\mathbf{\lambda}} < \infty$.
	Furthermore, the inverse Fourier transform of $K(\cdot)$,
	\begin{equation}\label{smallk}
	k(\mathbf{x})=\int_{\mathbb{R}^2} K(\mathbf{\lambda})\exp(i\mathbf{x}' \mathbf{\lambda})d\mathbf{\lambda}\,,
	\end{equation}
	satisfies $|k(\mathbf{x})| \leq \widetilde{k}(\mathbf{x})$, where $\widetilde{k}(\mathbf{x})$ is monotonically decreasing with respect to $\|\mathbf{x}\|$ on $[0,\infty)^2$, i.e., $ \widetilde{k}(\mathbf{x}) \geq \widetilde{k}(\mathbf{y})$ if $\|\mathbf{x}\| \leq \|\mathbf{y}\|$, and is an even function with $\int_{\mathbb{R}_+^2}\widetilde{k}(\mathbf{x})d\mathbf{x} < \infty$.
\end{assumption}	

\begin{assumption}{K}{4}\label{assk4}
	The inverse Fourier transform $k(\mathbf{x})$ in \eqref{smallk} is Lipschitz continuous with support $[-1,1]^2$.
\end{assumption}

\begin{assumption}{K}{5}\label{assk5}
	The quantity $K_\mathbf{h}(\mathbf{\lambda})$ in (\ref{Kh}) satisfies the following uniformly Lipschitz condition: for some constant $L_K > 0$, 
	\begin{equation*}
	|\mathbf{h}_T|^{3/2}|K_\mathbf{h}(\mathbf{\lambda}_\mathbf{s})-K_\mathbf{h}(\mathbf{\lambda}_\mathbf{t})| \leq L_K \|\mathbf{\lambda}_\mathbf{s}-\mathbf{\lambda}_\mathbf{t}\|\,, 
	\end{equation*}
	uniformly in $\mathbf{\lambda}_\mathbf{s}=\left( \frac{2 \pi s_1}{d_1},\frac{2 \pi s_2}{d_2} \right)$ and  $\mathbf{\lambda}_\mathbf{t}=\left( \frac{2 \pi t_1}{d_1},\frac{2 \pi t_2}{d_2} \right)$.
\end{assumption}

\begin{rem}\label{rem4}
Assumptions \ref{assk1} and \ref{assk3} are general assumptions commonly used in kernel estimators, see \cite{RM2006} and \cite{vidal2009automatic}. Assumption \ref{assk2} is mild since it holds for any bounded kernel with compact support. The Lipschitz continuity in Assumption \ref{assk4} is also common in kernel spectral estimators; see, e.g., \cite{vidal2009automatic}. Moreover, under Assumptions \ref{assk3} or \ref{assk4}, we have
	\begin{equation*}
	K_\mathbf{h}(\mathbf{\lambda})= \frac{1}{4 \pi^2} \sum_{\mathbf{j} \in\mathbb{Z}^2} k(j_1{h_T}_1,j_2{h_T}_2)\exp(-i\mathbf{j}'\mathbf{\lambda})~~\text{and}~~	K(\mathbf{\lambda})= \frac{1}{4 \pi^2} \int k(\mathbf{x})\exp(-i\mathbf{x}'\mathbf{\lambda})d\mathbf{x}\,,
	\end{equation*}
	where $k(\cdot)$ is defined in \eqref{smallk}. From the above representations it is clear that as soon as the sum in $K_\mathbf{h}(\mathbf{\lambda})$ can be
	approximated by an integral for small enough $\mathbf{h}_T=({h_T}_1,{h_T}_2)$, it holds for large $T$, 
	\begin{equation}\label{Khregular}
	K_\mathbf{h}(\mathbf{\lambda})\cong \frac{1}{|\mathbf{h}_T|}K \left(\frac{\lambda_1}{{h_T}_1},\frac{\lambda_2}{{h_T}_2} \right)\,,
	\end{equation}
	which is of order $O\left({|\mathbf{h}_T|}^{-1}\right)$ for bounded $K(\cdot)$, hence Assumption \ref{assk2} holds.
	By \eqref{Khregular}, if the kernel $K(\cdot)$ is uniformly Lipschitz continuous with compact support, then Assumption \ref{assk5} holds for a small enough $\mathbf{h}_T=({h_T}_1,{h_T}_2)$. For infinite support kernels, if $K(\cdot)$ is bounded and continuously differentiable, then Assumption \ref{assk5} also holds. 
	In fact, Assumptions \ref{assk1} to \ref{assk5} hold for many commonly used kernels such as uniform kernels $K(t_1,t_2)=\frac{1}{4} \mathds{1}_{\left\{|t_1| \leq 1 \right\}}\mathds{1}_{\left\{|t_2| \leq 1 \right\}}$, multiplicative 2-$d$  Epanechnikov kernels $K(t_1,t_2)=\frac{9}{16} (1-t_1^2)(1-t_2^2)\mathds{1}_{\left\{|t_1| \leq 1 \right\}}\mathds{1}_{\left\{|t_2| \leq 1 \right\}}$ and Gaussian kernels $K(t_1,t_2)=\frac{1}{2\pi} \exp(\frac{-t_1^2-t_2^2}{2})$.
\end{rem}

\section{Asymptotic Properties of Fourier Coefficients and Periodograms}\label{sec:asymp_fourier}

In this section, we establish some asymptotic results on the Fourier coefficients and the periodograms. The following theorem shows that the asymptotic normality of the Fourier coefficients holds in spatial lattice data. Specifically, linear combinations of Fourier coefficients are uniformly asymptotic normal.

\begin{thm}\label{thm21}
	Let Assumptions \ref{asstasymp}, \ref{asssummable}, \ref{assspecpostive} and \ref{assdepenmeasure} hold with some $p \geq 2$. 
	Denote, for $\mathbf{j} \in N$, 
	\begin{equation*}
	\widetilde{s}_T(\mathbf{j},k) = \left\{ \begin{array}{lcr}
	\frac{x(\mathbf{j})}{\sqrt{2\pi^2f(\mathbf{\lambda}_\mathbf{j})}}, & ~k=1 \,,\\
	\frac{y(\mathbf{j})}{\sqrt{2\pi^2f(\mathbf{\lambda}_\mathbf{j})}}, & ~k=2\,,
	\end{array}\right.
	\end{equation*}
	and $F_N:=\{(\mathbf{j},k): \mathbf{j} \in N,~k=1,2\}$. Then, for each fixed $q \in\mathbb{N}$, 	as T $\rightarrow \infty$,
	\begin{equation*}
	\sup\limits_{{\substack{\mathbf{a}_i \in F_N, \forall i=1,...,q , \\ \mathbf{a}_{i_1} \neq \mathbf{a}_{i_2}, \forall i_1 \neq i_2}},~
		{\substack{~\mathbf{c} \in\mathbb{R}^q\\ \|\mathbf{c}\|_2=1}},~z \in\mathbb{R}}
	\left|\mathbb{P}\left((\widetilde{s}_T(\mathbf{a}_1),...,\widetilde{s}_T(\mathbf{a}_q))'\mathbf{c} \leq z\right)-\Phi(z)\right|=o(1)\,.
	\end{equation*}
	where $\Phi(\cdot)$ is the standard normal cumulative distribution function.
\end{thm}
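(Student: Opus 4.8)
The plan is to recognize the target linear combination as a single scalar linear statistic of the field and then prove a uniform Berry--Esseen bound for it. Writing $\mathbf{a}_i=(\mathbf{j}_i,k_i)$ with $\mathbf{j}_i\in N$ and letting $\psi_1=\cos$, $\psi_2=\sin$, the invariance of the coefficients on $N$ under additive constants (noted after \eqref{symprop}) lets me center the field, so that
\begin{equation*}
S_T:=\sum_{i=1}^q c_i\,\widetilde{s}_T(\mathbf{a}_i)=\frac{1}{\sqrt{|T|}}\sum_{\mathbf{t}\in T} g_T(\mathbf{t})\,(V(\mathbf{t})-\mu),\qquad g_T(\mathbf{t})=\sum_{i=1}^q\frac{c_i\,\psi_{k_i}(-\lambda_{\mathbf{j}_i}'\mathbf{t})}{\sqrt{2\pi^2 f(\lambda_{\mathbf{j}_i})}}.
\end{equation*}
The weights $g_T(\mathbf{t})$ are deterministic and, crucially, \emph{uniformly} bounded: since $|\psi_{k}|\le1$, $\|\mathbf{c}\|_2=1$ and $q$ is fixed, Cauchy--Schwarz bounds $|g_T(\mathbf{t})|$ by $\sqrt{q}\,(2\pi^2 c)^{-1/2}$ via the lower bound $f\ge c$ from Assumption \ref{assspecpostive}. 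This bound is the device that makes every subsequent estimate uniform over the admissible $(\{\mathbf{a}_i\},\mathbf{c})$.

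Next I would pin down the variance. Writing $\sigma_T^2=\mathrm{Var}(S_T)=|T|^{-1}\sum_{\mathbf{s},\mathbf{t}\in T}g_T(\mathbf{s})g_T(\mathbf{t})\gamma(\mathbf{s}-\mathbf{t})$ and using the product-to-sum identities for $\psi_k$, the computation splits into a diagonal part and folded cross terms carrying factors $\psi(\pm\lambda'(\mathbf{s}+\mathbf{t}))$. The diagonal part is handled by dominated convergence against the summable $\gamma$ (Assumption \ref{asssummable}): the Cesàro weights $(1-|r_1|/d_1)(1-|r_2|/d_2)$ tend to $1$, and $\tfrac12\sum_{\mathbf{r}}\gamma(\mathbf{r})e^{-i\lambda'\mathbf{r}}=2\pi^2 f(\lambda)$ by \eqref{specden} exactly matches the normalizer, yielding asymptotic unit variance for each coefficient and asymptotic orthogonality of distinct coefficients; together these give $\sigma_T^2\to\sum_i c_i^2=1$, uniformly over $\mathbf{j}_i\in N$.

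For the central limit step I would invoke the weak-dependence structure of Assumption \ref{assdepenmeasure} with $p\ge2$. The standard route is an $m$-dependent approximation: using the coupling $\widetilde V$ and the summable physical-dependence measures $\delta_{\mathbf{j},p}$, approximate $V(\mathbf{t})-\mu$ by a field $V_m(\mathbf{t})$ depending only on innovations within distance $m$, with $\mathbb{L}^2$ error controlled by the tail $\sum_{\|\mathbf{j}\|>m}\delta_{\mathbf{j},2}$. For the resulting $m$-dependent linear statistic I would run a Bernstein big-block/small-block decomposition on the lattice, reducing $S_T$ to a sum of nearly independent block contributions, and then apply a Lindeberg/Berry--Esseen argument; because the weights are uniformly bounded the Lyapunov ratios are controlled by a single constant, and letting $m\to\infty$ after $T\to\infty$ closes the approximation. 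Using a Berry--Esseen bound rather than a bare CLT is what delivers the supremum over $z$ for free, and feeding in $\sigma_T^2\to1$ and the uniform weight bound makes the rate uniform over $(\{\mathbf{a}_i\},\mathbf{c},z)$.

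The main obstacle I anticipate is precisely this triple uniformity under dependence. Two points deserve care. First, the variance convergence must remain uniform down to the frequencies in $N$ sitting near the degenerate values $\lambda\in\{0,\pi\}$ (whose exact counterparts lie in $M$, not $N$): there the folded cross terms behave like a Dirichlet kernel and do not cancel as cleanly, so I would replace the pointwise vanishing argument by a uniform bound on these terms across all $\mathbf{j}\in N$. Second, the Berry--Esseen constant from the $m$-dependent CLT must be shown to depend on the configuration only through the uniform weight bound and $\sigma_T^2$, so that a single rate governs every choice of distinct indices and unit coefficient vector; verifying that the block argument is insensitive to the oscillatory pattern of $g_T$ is the crux of the proof.
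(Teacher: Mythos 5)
Your proposal follows essentially the same route as the paper: uniformly bounded deterministic weights via the lower bound $f\ge c$, variance normalization to $1$ via the exact discrete orthogonality of the Fourier-frequency cosines together with summability of $\gamma$, an $l_T$- (i.e.\ $m$-) dependent approximation controlled through the physical dependence measure, a big-block decomposition into independent blocks, and a Berry--Esseen bound transferred back through the approximations, with $T\to\infty$ before $m\to\infty$. The only substantive detail you gloss over is that under Assumption $\mathcal{P}$.6($p$) with only $p\ge 2$ third moments need not exist, so the paper inserts a truncation $U(\mathbf{k})=\widehat V(\mathbf{k})\mathds{1}_{\{|\widehat V(\mathbf{k})|\le\sqrt{|T|}/r_T\}}$ before invoking Berry--Esseen (your ``Lindeberg'' alternative amounts to the same device), and your worry about near-degenerate frequencies does not materialize because the folded cross terms vanish exactly at Fourier frequencies.
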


Theorem \ref{thm21} is a nontrivial generalization of \cite{SX2007} from time series to spatial lattice data. 
The uniformity in Theorem \ref{thm21} is helpful to show the convergence of the empirical distribution function of the Fourier coefficients in the following corollary.

\begin{coro}\label{coro21}
	Let Assumptions \ref{asstasymp}, \ref{asssummable}, \ref{assspecpostive} and \ref{assdepenmeasure} hold with some $p \geq 2$. For any weights $\{w_{\mathbf{j},N}:\mathbf{j} \in N\}$ such that $\sum_{\mathbf{j} \in N} w_{\mathbf{j},N}=1$ and $\sum_{\mathbf{j} \in N} w^2_{\mathbf{j},N} \rightarrow 0$ as $T \rightarrow \infty $, we have
	\begin{equation*}
	\sup_{z \in\mathbb{R}}\left|\frac{1}{2} \sum_{\mathbf{j} \in N} w_{\mathbf{j},N} \left({\mathds{1}_{\{x(\mathbf{j}) \leq z \sqrt{2 \pi^2 f(\lambda_\mathbf{j})}\}}+\mathds{1}_{\{y(\mathbf{j}) \leq z \sqrt{2 \pi^2 f(\lambda_\mathbf{j})}\}}}\right)-\Phi(z)\right| \xrightarrow{p}0\,,
	\end{equation*}
	where $\Phi(\cdot)$ is the standard normal distribution function. 
	If $\{\{w_{\mathbf{j},N,\mathbf{s}}:\mathbf{j} \in N\}:\mathbf{s} \in S\}$ is a class of weights indexed by a countable index set $S$ satisfying $\sum_{\mathbf{j} \in N} w_{\mathbf{j},N,\mathbf{s}}=1$ for all $\mathbf{s} \in S$ and $\sup_{\mathbf{s} \in S}\sum_{\mathbf{j} \in N} w^2_{\mathbf{j},N,\mathbf{s}} \rightarrow 0$, then the assertion remains true in the sense that, for any $\varepsilon > 0$, 
	\begin{equation*}
	\sup_{\mathbf{s} \in S} \, \mathbb{P}\left(\sup_{z \in\mathbb{R}} \left|\frac{1}{2} \sum_{\mathbf{j} \in N} w_{\mathbf{j},N,\mathbf{s}}\left({\mathds{1}_{\{x(\mathbf{j}) \leq z \sqrt{2 \pi^2 f(\lambda_\mathbf{j})}\}}+\mathds{1}_{\{y(\mathbf{j}) \leq z \sqrt{2 \pi^2 f(\lambda_\mathbf{j})}\}}}\right)-\Phi(z) \right| \geq \varepsilon\right) \rightarrow 0\,.
	\end{equation*}
\end{coro}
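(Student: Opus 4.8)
The plan is to read the expression inside the supremum as a weighted empirical distribution function of the standardized Fourier coefficients and to establish a Glivenko--Cantelli type statement for it. Writing, for $\mathbf{a}=(\mathbf{j},k)\in F_N$, $\widetilde{s}_T(\mathbf{a})$ for the quantity in the theorem and $w_{\mathbf{a}}:=w_{\mathbf{j},N}$, set
\[
\widehat{F}_T(z)=\tfrac12\sum_{\mathbf{j}\in N}w_{\mathbf{j},N}\big(\mathds{1}_{\{\widetilde{s}_T(\mathbf{j},1)\le z\}}+\mathds{1}_{\{\widetilde{s}_T(\mathbf{j},2)\le z\}}\big)=\tfrac12\sum_{\mathbf{a}\in F_N}w_{\mathbf{a}}\,\mathds{1}_{\{\widetilde{s}_T(\mathbf{a})\le z\}},
\]
so the goal is $\sup_{z}|\widehat{F}_T(z)-\Phi(z)|\xrightarrow{p}0$. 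Because the weights are nonnegative with $\sum_{\mathbf{j}}w_{\mathbf{j},N}=1$, the map $z\mapsto\widehat{F}_T(z)$ is non-decreasing with limits $0$ and $1$, i.e.\ a genuine random distribution function, and $\Phi$ is continuous. The first reduction is therefore standard: it suffices to prove the pointwise convergence $\widehat{F}_T(z)\xrightarrow{p}\Phi(z)$ for each fixed $z$, and then pass to the uniform statement through the monotonicity of $\widehat{F}_T$ and $\Phi$ together with a fixed finite grid $z_0<\cdots<z_m$ with $\Phi(z_{i+1})-\Phi(z_i)<\varepsilon$ (the P\'olya/Glivenko--Cantelli argument).

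I would establish the pointwise convergence by a second-moment argument. For the mean, Theorem~\ref{thm21} with $q=1$ and $\mathbf{c}=1$ gives $\sup_{\mathbf{a}\in F_N,\,z}|\mathbb{P}(\widetilde{s}_T(\mathbf{a})\le z)-\Phi(z)|=o(1)$, whence
\[
\big|\mathbb{E}\widehat{F}_T(z)-\Phi(z)\big|\le\Big(\textstyle\sum_{\mathbf{j}\in N}w_{\mathbf{j},N}\Big)\,\sup_{\mathbf{a},z}\big|\mathbb{P}(\widetilde{s}_T(\mathbf{a})\le z)-\Phi(z)\big|=o(1)
\]
uniformly in $z$. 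For the variance I would expand
\[
\mathrm{Var}\big(\widehat{F}_T(z)\big)=\tfrac14\sum_{\mathbf{a},\mathbf{b}\in F_N}w_{\mathbf{a}}w_{\mathbf{b}}\,\mathrm{Cov}\big(\mathds{1}_{\{\widetilde{s}_T(\mathbf{a})\le z\}},\mathds{1}_{\{\widetilde{s}_T(\mathbf{b})\le z\}}\big).
\]
The diagonal part is bounded by $\tfrac{1}{16}\sum_{\mathbf{a}}w_{\mathbf{a}}^2=O\big(\sum_{\mathbf{j}}w_{\mathbf{j},N}^2\big)\to0$ by hypothesis.

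The off-diagonal part is the crux, and its pair-uniformity is the main obstacle. Since there are of order $|N|^2$ cross terms while $\sum_{\mathbf{a}\ne\mathbf{b}}|w_{\mathbf{a}}w_{\mathbf{b}}|\le(\sum_{\mathbf{j}}w_{\mathbf{j},N})^2=O(1)$, what is needed is that the indicator covariance vanishes \emph{uniformly} over distinct pairs: $\sup_{\mathbf{a}\ne\mathbf{b},\,z}|\mathrm{Cov}(\cdots)|\to0$. For a fixed pair this follows from Theorem~\ref{thm21} with $q=2$: rescaling the unit-vector conclusion shows every linear combination $c_1\widetilde{s}_T(\mathbf{a})+c_2\widetilde{s}_T(\mathbf{b})$ converges to $N(0,c_1^2+c_2^2)$, so by Cram\'er--Wold the pair converges to $N(0,I_2)$, the two coordinates are asymptotically independent, and the covariance of the indicators tends to $\Phi(z)^2-\Phi(z)^2=0$. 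To make this uniform over pairs I would argue by contradiction: if $\sup_{\mathbf{a}\ne\mathbf{b},z}|\mathrm{Cov}(\cdots)|\not\to0$, choose offending pairs and evaluation points along a subsequence; the marginals are uniformly tight (being uniformly $N(0,1)$-close), hence the bivariate laws are tight, and by the \emph{uniform} conclusion of Theorem~\ref{thm21} every one-dimensional projection of any weak limit equals $N(0,1)$, forcing the limit to be $N(0,I_2)$, so P\'olya's theorem contradicts the assumed gap (the evaluation points may be taken convergent in $[-\infty,\infty]$, the infinite cases being trivial by tightness). With this bound the off-diagonal sum is $o(1)$, so $\mathrm{Var}(\widehat{F}_T(z))\to0$ and Chebyshev yields the pointwise claim.

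For the second assertion I note that every bound above is controlled only by $\sum_{\mathbf{j}}w_{\mathbf{j},N,\mathbf{s}}=1$, by $\sup_{\mathbf{s}\in S}\sum_{\mathbf{j}}w_{\mathbf{j},N,\mathbf{s}}^2\to0$, and by the pair-uniform covariance bound, none of which depends on $\mathbf{s}$. Hence $\sup_{\mathbf{s}\in S}\mathbb{E}|\widehat{F}_{T,\mathbf{s}}(z)-\Phi(z)|^2\to0$ for each fixed $z$, and combining Chebyshev over the fixed grid with the monotonicity argument, all carried out uniformly in $\mathbf{s}$, gives $\sup_{\mathbf{s}\in S}\mathbb{P}\big(\sup_z|\cdots|\ge\varepsilon\big)\to0$, as required.
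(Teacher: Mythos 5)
Your proposal is correct and follows essentially the same route as the paper: reduce to pointwise convergence of the mean and second moment of the weighted empirical distribution via Chebyshev, control the diagonal terms by $\sum_{\mathbf{j}}w_{\mathbf{j},N}^2\to0$, and control the off-diagonal terms by a covariance bound that is uniform over distinct pairs, which is exactly the paper's key step (its equation for the bivariate convergence $\sup_{\mathbf{a}_1\neq\mathbf{a}_2}|\mathbb{P}(\widetilde{s}_T(\mathbf{a}_1)\le z_1,\widetilde{s}_T(\mathbf{a}_2)\le z_2)-\Phi(z_1)\Phi(z_2)|=o(1)$). The only difference is cosmetic: you obtain this pair-uniform bivariate normality by a tightness/contradiction argument identifying the limit through its one-dimensional projections, whereas the paper vectorizes all pairs into a single sequence and applies L\'evy's continuity theorem twice; both rest on the same uniform conclusion of Theorem \ref{thm21}.
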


	The following example fulfills the required conditions for the underlying random fields in Theorem \ref{thm21} and Corollary \ref{coro21}.
	\begin{example}[Linear Random Fields]\label{linear_field}
		Define the linear random field $\left\{ V(\mathbf{t}):\mathbf{t}\in \mathbb{Z}^2\right\}$ as
		\begin{equation}\label{ex_linear}
		V(\mathbf{j})=\sum_{\mathbf{s} \in\mathbb{Z}^2}a_\mathbf{s} \varepsilon_{\mathbf{j}-\mathbf{s}}\,,	
		\end{equation}
		where $\{\varepsilon_\mathbf{t}\}_{\mathbf{t}\in \mathbb{Z}^2}$ is an i.i.d. random field with $\mathbb{E}(\varepsilon_{\mathbf{0}})=0$.
		If we have $\mathbb{E}(|\varepsilon_{\mathbf{0}}|^p) < \infty$ with some $p \geq 2$ and $\sum_{\mathbf{j} \in \mathbb{Z}^2} |a_\mathbf{j}| < \infty$, then Assumptions  \ref{asssummable}, \ref{assspecpostive} and \ref{assdepenmeasure} with some $p \geq 2$ hold since $\sum_{\mathbf{j} \in \mathbb{Z}^2} \delta_{\mathbf{j},p} = \sum_{\mathbf{j} \in \mathbb{Z}^2} \|V(\mathbf{j})-\widetilde{V}(\mathbf{j})\|_p = \sum_{\mathbf{j} \in \mathbb{Z}^2} |a_\mathbf{j}| \|\varepsilon_{\mathbf{0}}-{\widetilde{\varepsilon}}_{\mathbf{0}}\|_p < \infty$.
	\end{example}
	\begin{example}[Nonlinear Spatial Autoregressive Models]\label{autoreg}
		Let $\mathcal{N} \subset \mathbb{Z}^2$ be a finite set and $\mathbf{0} \notin \mathcal{N}$. Define the nonlinear spatial autoregressive models $\left\{ V(\mathbf{t}):\mathbf{t}\in \mathbb{Z}^2\right\}$ as \begin{equation}\label{ex_autoreg}
		V(\mathbf{t})=G(\{V(\mathbf{t}-\mathbf{s})\}_{\mathbf{s} \in \mathcal{N}} ; \varepsilon_{\mathbf{t}})\,,
		\end{equation}
		where the function $G$ satisfies the following condition: there exists nonnegative numbers $u_{\mathbf{s}}, {\mathbf{s}} \in \mathcal{N}$, with $\sum_{{\mathbf{s}} \in \mathcal{N}} u_{\mathbf{s}} < 1$ such that for all $\{v(-\mathbf{s})\}_{\mathbf{s} \in \mathcal{N}}$ and $\{v'(-\mathbf{s})\}_{\mathbf{s} \in \mathcal{N}}$,
 		$$
 		|G(\{v(-\mathbf{s})\}_{\mathbf{s} \in \mathcal{N}} ; \varepsilon_{\mathbf{t}})-G(\{v'(-\mathbf{s})\}_{\mathbf{s} \in \mathcal{N}} ; \varepsilon_{\mathbf{t}})| \leq \sum_{{\mathbf{s}} \in \mathcal{N}} u_{\mathbf{s}} |v(-\mathbf{s})-v'(-\mathbf{s})|\,.
 		$$			
		If there exists $\{v(-\mathbf{s})\}_{\mathbf{s} \in \mathcal{N}}$ such that $G(\{v(-\mathbf{s})\}_{\mathbf{s} \in \mathcal{N}} ; \varepsilon_{\mathbf{0}}) \in \mathbb{L}^p$ for some $p \geq 2$, then following the argument in Section 5 of \cite{SX2007} or Example 2 of \cite{deb2017asymptotic}, we have $\mathbb{E}\left(|V(\mathbf{0})|^{p}\right)< \infty$ and  $\delta_{\mathbf{j},p}=O(\rho^{\|\mathbf{j}\|})$ for some $\rho \in (0,1)$, and hence Assumptions  \ref{asssummable}, \ref{assspecpostive} and \ref{assdepenmeasure} with some $p \geq 2$ hold.
	\end{example}

The following theorem establishes some asymptotic behaviors of the Fourier coefficients and periodograms relative to the spectral density function.

\begin{thm}\label{thm2}
	Suppose that Assumptions \ref{asstasymp}, \ref{asssummable} and \ref{assspecpostive} hold, and either Assumption \ref{asslinear} with $r > 1/2$ holds, or Assumption \ref{assgmc} with $\mathbb{E}\left(|V(\mathbf{0})|^{16}\right)< \infty$ holds, then with $I(\mathbf{j})$ defined as in \eqref{periodogram_I}, we have
	\begin{itemize}
		\item[(a)] 
		\begin{equation}\label{Clemeq3}
		\frac{1}{2|N|} \sum\limits_{\mathbf{j}\in N}\frac{x(\mathbf{j})+y(\mathbf{j})}{\sqrt{f(\mathbf{\lambda_j})}} \xrightarrow[]{p} 0\,.
		\end{equation}
		
		\item[(b)] If additionally the auto-covariance function $\gamma(\cdot)$ satisfies $\sum_{\mathbf{j}\in \mathbb{Z}^2} |\mathbf{j}|^u |\gamma(\mathbf{j})|< \infty$ for some $u > 0$, then
		\begin{equation}\label{Clemeq4}
		\begin{aligned}
		\sup_{\mathbf{l},\mathbf{k} \in N}| \text{\rm {Cov}}(x(\mathbf{l}),x(\mathbf{k}))-2 \pi^2f(\lambda_\mathbf{k})\delta_{\mathbf{l},\mathbf{k}}|=	\left\{ \begin{array}{lcr}
		O(|T|^{-u}), & 0 < u < 1 \,,\\
		O\left(\frac{\log|T|}{|T|}\right), & u=1 \,,\\
		O(|T|^{-1}), & u>1\,.
		\end{array}\right.\\
		\sup_{\mathbf{l},\mathbf{k} \in N}|\text{\rm {Cov}}(y(\mathbf{l}),y(\mathbf{k}))-2 \pi^2f(\lambda_\mathbf{k})\delta_{\mathbf{l},\mathbf{k}}|=	\left\{ \begin{array}{lcr}
		O(|T|^{-u}), & 0 < u < 1 \,,\\
		O\left(\frac{\log|T|}{|T|}\right), & u=1 \,,\\
		O(|T|^{-1}), & u>1\,.
		\end{array}\right.
		\end{aligned}
		\end{equation}
		\item[(c)] 
		\begin{equation}\label{Clemeq5}
		\frac{1}{4\pi^2|N|}\sum\limits_{\mathbf{j}\in N}\frac{I(\mathbf{j})}{f(\mathbf{\lambda_j})} \xrightarrow[]{p} 1\,.
		\end{equation}
		\item[(d)] 
		\begin{equation}\label{Clemeq6}
		\frac{1}{|N|} \sum_{\mathbf{j} \in N} \frac{I^2(\mathbf{j})}{f^2(\lambda_\mathbf{j})}=2(4\pi^2)^2+o_p(1)\,.
		\end{equation}
		\item[(e)] 
		There exists some $q = 4+\epsilon$ with $\epsilon \in (0,1)$ such that
		\begin{equation}\label{Clemeq7}
		\frac{1}{|N|} \sum_{\mathbf{j} \in N} \frac{I^q(\mathbf{j})}{f^q(\lambda_\mathbf{j})}<C_2+o_p(1)\,.
		\end{equation}
	\end{itemize}
\end{thm}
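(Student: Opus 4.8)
The plan is to treat part (b) as the analytic backbone and to derive the remaining parts from it together with the uniform asymptotic normality of Theorem~\ref{thm21}. First I would record that for every $\mathbf{j}\in N$ one has $\sum_{\mathbf{t}\in T}e^{-i\lambda_{\mathbf{j}}'\mathbf{t}}=0$, so that $\mathbb{E}[x(\mathbf{j})]=\mathbb{E}[y(\mathbf{j})]=0$ and the coefficients on $N$ carry no mean information. For (b) I would expand
\[
\mathrm{Cov}(x(\mathbf{l}),x(\mathbf{k}))=\frac{1}{|T|}\sum_{\mathbf{s},\mathbf{t}\in T}\gamma(\mathbf{t}-\mathbf{s})\cos(\lambda_{\mathbf{l}}'\mathbf{s})\cos(\lambda_{\mathbf{k}}'\mathbf{t}),
\]
change variables to $\mathbf{r}=\mathbf{t}-\mathbf{s}$, and split the product of cosines into exponentials. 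The inner sums over $\mathbf{s}$ produce two-dimensional Dirichlet kernels in the frequencies $\lambda_{\mathbf{l}}\pm\lambda_{\mathbf{k}}$; the diagonal term $\mathbf{l}=\mathbf{k}$ gives the Riemann-sum approximation to $\tfrac12\sum_{\mathbf r}e^{-i\lambda_{\mathbf k}'\mathbf r}\gamma(\mathbf r)=2\pi^2 f(\lambda_{\mathbf k})$, while the off-diagonal terms are controlled by the boundary error of the kernel. The weighted summability $\sum_{\mathbf j}|\mathbf j|^u|\gamma(\mathbf j)|<\infty$ then produces exactly the three regimes of the stated rate, the extra $\log|T|$ at the threshold $u=1$ arising from the borderline boundary sum; the same computation applies verbatim to $\mathrm{Cov}(y(\mathbf l),y(\mathbf k))$.

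Part (a) then follows by a second-moment argument. Since the summands have mean zero, it suffices to bound the variance of $(2|N|)^{-1}\sum_{\mathbf j\in N}(x(\mathbf j)+y(\mathbf j))/\sqrt{f(\lambda_{\mathbf j})}$. Writing this variance as a double sum of covariances and using Assumption~\ref{assspecpostive} to bound $1/f$, the diagonal contributes $O(1/|N|)$ while the off-diagonal piece is controlled by the covariance bound of (b) (or, using only Assumption~\ref{asssummable}, by a direct Fej\'er-kernel bound on $\sum_{\mathbf l\ne\mathbf k}|\mathrm{Cov}(x(\mathbf l),x(\mathbf k))|$); both vanish, so the average is $o(1)$ in $\mathbb L^2$ and hence $\xrightarrow{p}0$.

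For parts (c)--(e) I would combine distributional and moment information. Theorem~\ref{thm21} shows that at each frequency $(x(\mathbf j),y(\mathbf j))/\sqrt{2\pi^2 f(\lambda_{\mathbf j})}$ is asymptotically a pair of independent standard normals, so $I(\mathbf j)/(2\pi^2 f(\lambda_{\mathbf j}))\Rightarrow\chi^2_2$; coupling this with uniform integrability supplied by the moment bounds below gives the pointwise limits $\mathbb E[I(\mathbf j)/f(\lambda_{\mathbf j})]\to 4\pi^2$ and $\mathbb E[I^2(\mathbf j)/f^2(\lambda_{\mathbf j})]\to 2(4\pi^2)^2$, the latter using that the fourth-order cumulant of $(x(\mathbf j),y(\mathbf j))$ vanishes asymptotically. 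To upgrade these pointwise limits to the averages in \eqref{Clemeq5}--\eqref{Clemeq6} I would show that the variance of the frequency-average tends to $0$, i.e.\ that $\sum_{\mathbf l\ne\mathbf k}\mathrm{Cov}\big(I^p(\mathbf l),I^p(\mathbf k)\big)=o(|N|^2)$ for $p=1,2$. Decomposing such a covariance into products of the second-order covariances handled in (b) plus genuine higher-order joint cumulants of the Fourier coefficients, the products are summable by (b), while the joint cumulants reduce to spatial joint cumulants of the field. Finally, \eqref{Clemeq7} follows from a uniform bound $\sup_{\mathbf j}\mathbb E[I^q(\mathbf j)]<\infty$ for some $q=4+\epsilon$, obtained from Rosenthal-type moment inequalities, together with Markov's inequality.

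The main obstacle is the higher-order cumulant control underlying the concentration in (c) and especially (d): the covariance of two squared periodogram ordinates involves joint moments of degree up to eight in the $V(\mathbf t)$, and one must show these are simultaneously summable across frequencies and decaying in $|T|$. Under Assumption~\ref{asslinear} I would exploit the explicit representation $x(\mathbf j)=\sum_{\mathbf i}b_{\mathbf i}\varepsilon_{\mathbf i}$ and compute the cumulants directly from those of the i.i.d.\ innovations, the condition $r>1/2$ ensuring the requisite transfer-function regularity. Under Assumption~\ref{assgmc} I would instead bound these quantities by a coupling/physical-dependence argument; this is exactly where the $\mathbb E[|V(\mathbf 0)|^{16}]<\infty$ hypothesis is consumed, since controlling the covariance of degree-four statistics such as $I^2$ through the coupling requires those statistics to lie in $\mathbb L^2$, i.e.\ the field in $\mathbb L^{16}$. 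Establishing the geometric decay of these eighth-order joint cumulants from the GMC condition, uniformly over the frequencies, is the technically heaviest step.
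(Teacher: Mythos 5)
Your plan for parts (a)--(d) is essentially the paper's argument. The paper also obtains the covariance asymptotics by expanding $\mathrm{Cov}(x(\mathbf l),x(\mathbf k))\pm\mathrm{Cov}(y(\mathbf l),y(\mathbf k))$ into exponential sums, exploiting $\sum_{\mathbf j\in T}e^{-i\mathbf j'(\lambda_{\mathbf l}+\lambda_{\mathbf k})}=0$ and the (weighted) summability of $\gamma$, and then gets (a) by a Chebyshev/second-moment argument; (c) and (d) are likewise proved by computing the mean of the average and killing its variance via cross-frequency covariance bounds on $I(\mathbf j)$ and $I^2(\mathbf j)$ (the paper's Lemmas 7.3 and 7.4, proved by Brockwell--Davis-type cumulant computations in the linear case and by the GMC-implied cumulant summability in the nonlinear case). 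The one genuine difference is that you obtain the limiting values $\mathbb E[I(\mathbf j)]\to 4\pi^2 f$ and $\mathbb E[I^2(\mathbf j)]\to 2(4\pi^2 f)^2$ via asymptotic normality of the Fourier coefficients plus uniform integrability, whereas the paper reads them off directly from $\mathbb E[I]=\mathrm{Var}(x)+\mathrm{Var}(y)$ and $\mathbb E[I^2]=\mathrm{Var}(I)+(\mathbb E I)^2$; your detour is valid but costs you nothing the direct computation does not already require, since the UI you need is exactly the higher-moment bound you must prove anyway. You also correctly locate the technical weight of the proof in the eighth-order joint cumulant control and in where the $\mathbb E|V(\mathbf 0)|^{16}<\infty$ hypothesis is spent.

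There is, however, a genuine gap in your argument for part (e). A uniform bound $\sup_{\mathbf j}\mathbb E[I^q(\mathbf j)/f^q(\lambda_{\mathbf j})]\le C$ together with Markov's inequality only yields that the frequency average is $O_p(1)$: for nonnegative $A_T$ with $\mathbb E[A_T]\le C$ one gets $\mathbb P(A_T>M)\le C/M$, which does not tend to $0$ for any fixed $M$, so one cannot conclude $A_T\le C_2+o_p(1)$ for a single constant $C_2$. The assertion \eqref{Clemeq7} is a law-of-large-numbers-type statement for $I^q(\mathbf j)/f^q(\lambda_{\mathbf j})$ across frequencies, and proving it requires showing that the cross-frequency covariances $\mathrm{Cov}(I^q(\mathbf l),I^q(\mathbf k))$ are $o(|N|^2)$ in aggregate --- i.e., exactly the kind of higher-order joint cumulant control you invoke for $p=1,2$ in (c)--(d), but now at order $2q$ in the periodogram (degree roughly $16+4\epsilon$ in the field). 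This is what the paper supplies by appealing to the periodogram-power LLNs of Zhao and of Shao--Wu (Lemma A.5) adapted to the spatial setting, under Assumption $\mathcal P$.5 with $r>1/2$ or GMC with sixteenth moments. You should either extend your covariance-decomposition argument from $p=1,2$ to $p=q$, or invoke such an LLN explicitly; Rosenthal plus Markov alone does not close this step.
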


%
%

Some statistical applications rely on the asymptotic behaviors of the weighted means of the Fourier coefficients and periodograms, for example, local bootstrap in frequency domain, see \cite{paparoditis1999local} and \cite{kirch2011tft} in time series context. The next theorem describes the asymptotic behaviors of the weighted means of the Fourier coefficients and periodograms. 
\begin{thm}\label{thm_periodogram}
	Suppose that Assumptions \ref{asstasymp}, \ref{asssummable} and \ref{assspecpostive} hold, and either Assumption \ref{asslinear} with $r > 1/2$ holds, or Assumption \ref{assgmc} with $\mathbb{E}\left(|V(\mathbf{0})|^{16}\right)< \infty$ holds. Also, assume that the bandwidth fulfills $(|\mathbf{h}_T|^4|T|)^{-1}=o(1)$, and the kernel $K(\cdot)$ fulfills Assumptions \ref{assk1} and \ref{assk5},  and 
	\begin{eqnarray}\label{add_assump}
	\frac{1}{|T||\mathbf{h}_T|} \sum_{\mathbf{j} \in\mathbb{Z}^2}K^2 \left(\frac{2 \pi j_1}{{h_T}_1 d_1},\frac{2 \pi j_2}{{h_T}_2 d_2} \right) =O(1)\,.
	\end{eqnarray}
	Define the weights $p_{\mathbf{s},T}$ as
	\begin{eqnarray}\label{def_weight}
		p_{\mathbf{s},T}= \frac{K\left(\frac{2 \pi s_1}{{h_T}_1 d_1},\frac{2 \pi s_2}{{h_T}_2 d_2} \right)}{\sum_{\mathbf{j} \in \mathbb{Z}^2} K\left(\frac{2 \pi j_1}{{h_T}_1 d_1},\frac{2 \pi j_2}{{h_T}_2 d_2} \right)}\,.
	\end{eqnarray}
	Then, with $I(\mathbf{j})$ defined as in \eqref{periodogram_I}, the following results hold.
	\begin{itemize}
		\item[(a)] 
		If
		\begin{equation}\label{lemeq4}
		\sup_{\mathbf{l},\mathbf{k} \in N}| \text{\rm {Cov}}(x(\mathbf{l}),x(\mathbf{k}))-2 \pi^2f(\lambda_\mathbf{k})\delta_{\mathbf{l},\mathbf{k}}|=O \left(\frac{1}{|\mathbf{h}_T||T|}\right) \,,
		\end{equation}
		\begin{equation}\label{lemeq5}
		\sup_{\mathbf{l},\mathbf{k} \in N}|\text{\rm {Cov}}(y(\mathbf{l}),y(\mathbf{k}))-2 \pi^2f(\lambda_\mathbf{k})\delta_{\mathbf{l},\mathbf{k}}|=O \left(\frac{1}{|\mathbf{h}_T||T|}\right)\,,
		\end{equation}
		are satisfied, then
		\begin{equation*}
		{\sup_{\mathbf{j} \in N} \left| \sum_{\mathbf{s} \in \mathbb{Z}^2} p_{\mathbf{s},T} (x(\mathbf{j}+\mathbf{s})+y(\mathbf{j}+\mathbf{s})) \right| = o_p(1)\,.}
		\end{equation*} 
		
		\item[(b)] 
		\begin{equation*} 
		\sup_{\mathbf{j} \in N} \left| \sum_{\mathbf{s} \in \mathbb{Z}^2} p_{\mathbf{s},T} I(\mathbf{j}+\mathbf{s}) - 4 \pi^2 f(\lambda_\mathbf{j}) \right| = o_p(1).
		\end{equation*}
		
		
		\item[(c)] 
		\begin{equation*} 
		\sup_{\mathbf{j} \in N} \sum_{\mathbf{s} \in \mathbb{Z}^2} p_{\mathbf{s},T} I^2(\mathbf{j}+\mathbf{s}) \leq C_1+o_p(1)\,.
		\end{equation*}
		
		\item[(d)] 
		There exists some $q = 4+\epsilon$ with $\epsilon \in (0,1)$ such that
		\begin{equation*} 
		\sup_{\mathbf{j} \in N} \sum_{\mathbf{s} \in \mathbb{Z}^2} p_{\mathbf{s},T} I^q (\mathbf{j}+\mathbf{s}) \leq C_2+o_p(1)\,.
		\end{equation*}
	\end{itemize}
\end{thm}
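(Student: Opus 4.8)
The plan is to handle all four assertions through a common bias--fluctuation decomposition, writing each weighted sum as $\sum_{\mathbf{s}} p_{\mathbf{s},T} Z(\mathbf{j}+\mathbf{s}) = \sum_{\mathbf{s}} p_{\mathbf{s},T}\,\mathbb{E}[Z(\mathbf{j}+\mathbf{s})] + \sum_{\mathbf{s}} p_{\mathbf{s},T}\bigl(Z(\mathbf{j}+\mathbf{s}) - \mathbb{E}[Z(\mathbf{j}+\mathbf{s})]\bigr)$, where $Z$ is $x+y$, $I$, $I^2$ or $I^q$. These are the \emph{localized, uniform} counterparts of the global averages in Theorem \ref{thm2}, so I would lean on Theorem \ref{thm2} and on the covariance estimates \eqref{lemeq4}--\eqref{lemeq5} throughout.

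First, the bias. For part (a) it vanishes identically, since $\mathbb{E}[x(\mathbf{j})] = \mathbb{E}[y(\mathbf{j})] = 0$ at every Fourier frequency outside $D$ (the mean-carrying frequencies receive asymptotically negligible weight because their lag-distance to $\mathbf{j}$ is of order $\mathbf{d}_T$). For part (b), the diagonal part of \eqref{lemeq4}--\eqref{lemeq5} gives $\mathbb{E}[I(\mathbf{k})] = \mathrm{Var}(x(\mathbf{k})) + \mathrm{Var}(y(\mathbf{k})) = 4\pi^2 f(\lambda_{\mathbf{k}}) + O((|\mathbf{h}_T||T|)^{-1})$; since the weights $p_{\mathbf{s},T}$ are supported on lags with $\|\lambda_{\mathbf{j}+\mathbf{s}} - \lambda_{\mathbf{j}}\| = O(\|\mathbf{h}_T\|) \to 0$ and $\sum_{\mathbf{s}} p_{\mathbf{s},T} = 1$, the uniform continuity of $f$ on $[0,2\pi]^2$ yields $\sum_{\mathbf{s}} p_{\mathbf{s},T}\mathbb{E}[I(\mathbf{j}+\mathbf{s})] = 4\pi^2 f(\lambda_{\mathbf{j}}) + o(1)$ uniformly in $\mathbf{j}$.

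Next, the fluctuation, which I would first bound pointwise and then upgrade to a uniform bound. Pointwise, $\mathrm{Var}\bigl(\sum_{\mathbf{s}} p_{\mathbf{s},T} x(\mathbf{j}+\mathbf{s})\bigr)$ splits into a diagonal part $O(\sum_{\mathbf{s}} p_{\mathbf{s},T}^2)$ and an off-diagonal part $O(|\mathbf{h}_T|^{-1}|T|^{-1})$ after inserting \eqref{lemeq4}; using \eqref{add_assump} and Assumption \ref{assk1} one checks $\sum_{\mathbf{s}} p_{\mathbf{s},T}^2 = O((|\mathbf{h}_T||T|)^{-1})$, so the whole variance is $O((|\mathbf{h}_T||T|)^{-1}) = o(1)$. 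The same scheme, now with the fourth-order (quadratic-form) covariance structure of $I$, handles part (b), while the higher moments of $I^2$ and $I^q$ handle parts (c)--(d); here I would invoke Assumption \ref{asslinear} with $r > 1/2$ or the GMC condition with $\mathbb{E}|V(\mathbf{0})|^{16} < \infty$ to keep the relevant cumulants uniformly summable against the kernel weights. To pass to the supremum over the $\asymp |T|$ frequencies in $N$, I would discretize $[0,2\pi]^2$ by a polynomially fine grid, use the $|\mathbf{h}_T|^{3/2}$-Lipschitz continuity of $K_{\mathbf{h}}$ in Assumption \ref{assk5} to show that $\mathbf{j}\mapsto \sum_{\mathbf{s}} p_{\mathbf{s},T} Z(\mathbf{j}+\mathbf{s})$ oscillates by $o_p(1)$ between neighbouring grid points, and then combine a union bound over the grid with a $2m$-th moment (Markov) inequality. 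Because the per-point variance is $O((|\mathbf{h}_T||T|)^{-1})$ and $|\mathbf{h}_T| \gg |T|^{-1/4}$ by $(|\mathbf{h}_T|^4|T|)^{-1} = o(1)$, taking $m = 2$ already makes $|T|\,(\mathrm{Var})^{m} \to 0$. Parts (c) and (d) being one-sided, I would simply dominate $\sum_{\mathbf{s}} p_{\mathbf{s},T} I^2(\mathbf{j}+\mathbf{s})$ (resp. $I^q$) by its expectation (bounded via the moment estimates behind Theorem \ref{thm2}(d),(e)) plus a fluctuation dispatched by the same device.

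The main obstacle is precisely this uniform-in-$\mathbf{j}$ step. A naive Chebyshev-plus-union-bound over $\asymp |T|$ frequencies fails because the per-point variance is only $O((|\mathbf{h}_T||T|)^{-1})$, so one needs both the higher-moment hypotheses --- the $16$-th moment under GMC is exactly what controls the fourth power of the quartic functional $\sum_{\mathbf{s}} p_{\mathbf{s},T} I^2(\mathbf{j}+\mathbf{s})$ entering part (c) --- and the Lipschitz regularity of $K_{\mathbf{h}}$ to reduce the supremum to a grid. The most delicate bookkeeping will be tracking the fourth-order cumulant contributions to the moments of the quadratic statistics $I(\mathbf{j})$ and verifying that they remain uniformly summable against $\{p_{\mathbf{s},T}\}$.
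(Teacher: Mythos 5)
Your skeleton — pointwise variance control via \eqref{lemeq4}--\eqref{lemeq5} together with $\sum_{\mathbf{s}}p_{\mathbf{s},T}^2=O\bigl((|\mathbf{h}_T||T|)^{-1}\bigr)$ from \eqref{add_assump}, then a discretization plus the Lipschitz property of $K_{\mathbf{h}}$ in Assumption \ref{assk5} to upgrade to a supremum — is exactly the architecture of the paper's proof, and your bias analysis for parts (a) and (b) is fine. The gap is in the union-bound step. You take the grid to have $\asymp|T|$ points and claim a fourth-moment Markov bound closes because $|T|\cdot(\mathrm{Var})^{2}\to0$. That calculation implicitly assumes $\mathbb{E}\bigl[(\sum_{\mathbf{s}}p_{\mathbf{s},T}(x(\mathbf{j}+\mathbf{s})-\mathbb{E}x(\mathbf{j}+\mathbf{s})))^{4}\bigr]=O(\mathrm{Var}^{2})$, which is not automatic: the fourth-cumulant contribution is $\sum_{\mathbf{s}_1,\dots,\mathbf{s}_4}p_{\mathbf{s}_1}\cdots p_{\mathbf{s}_4}\,\mathrm{cum}\bigl(x(\mathbf{j}+\mathbf{s}_1),\dots,x(\mathbf{j}+\mathbf{s}_4)\bigr)$, and the bound one gets from summability of the field's fourth cumulants is only $\mathrm{cum}_4=O(|T|^{-1})$ uniformly, hence a term of order $|T|^{-1}$, which dominates $\mathrm{Var}^{2}=O(|\mathbf{h}_T|^{-2}|T|^{-2})$ under $|\mathbf{h}_T|\gg|T|^{-1/4}$. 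Then $|T|\cdot O(|T|^{-1})=O(1)$ and the union bound does not vanish. Rescuing $m=2$ requires a refined Brillinger-type analysis exploiting the frequency constraint in the cumulant (the $\Delta_T$ factor), which you do not supply; and for parts (c)--(d) the analogous fourth moment of $\sum_{\mathbf{s}}p_{\mathbf{s},T}I^{q}(\mathbf{j}+\mathbf{s})$ with $q=4+\epsilon$ would involve moments of $V$ of order exceeding $32$, far beyond the assumed sixteenth moment, so the higher-moment route cannot even be set up there.

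The paper avoids all of this with a coarser grid: it takes $\asymp|T|^{1/2}/|\mathbf{h}_T|$ anchor frequencies $\mathbf{s}_{\mathbf{l}}$, so that a plain Chebyshev bound with the \emph{second} moment already gives $\sup_{\mathbf{l}}|\sum_{\mathbf{s}}p_{\mathbf{s},T}x(\mathbf{s}_{\mathbf{l}}+\mathbf{s})|=O_p(|\mathbf{h}_T|^{-1}|T|^{-1/4})$, and pushes the (now larger) gaps between anchors into the modulus-of-continuity term, which is controlled by Assumption \ref{assk5} together with $\frac{1}{|T|}\sum_{\mathbf{j}\in T}|x(\mathbf{j})|=O_p(1)$ (a consequence of Theorem \ref{thm2}(c)); both pieces are $O_p(|\mathbf{h}_T|^{-1}|T|^{-1/4})=o_p(1)$ precisely because $(|\mathbf{h}_T|^{4}|T|)^{-1}=o(1)$. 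You should either adopt this coarse-grid balance or carry out the refined cumulant bookkeeping; as written, the uniformity step does not go through, and in particular not for parts (c) and (d).
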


Note that the condition in \eqref{add_assump} is satisfied when the kernel $K(\cdot)$ is square-integrable, i.e., $\int_{\mathbb{R}^2} K^2(\mathbf{\lambda}) d\mathbf{\lambda}< \infty$, which is satisfied by the commonly used kernels mentioned in Remark \ref{rem4}. Also, the conditions \eqref{lemeq4} and \eqref{lemeq5} are satisfied under the additional condition in Theorem \ref{thm2}(b) for $|\mathbf{h}_T|=O(|T|^{u-1})$ when $u<1$ or $|\mathbf{h}_T|=o(1)$ when $u \geq 1$. The following examples fulfill the required conditions for the underlying random fields in Theorems \ref{thm2} and \ref{thm_periodogram}, .
	\begin{example}[Linear Random Fields]
		For the linear random fields defined in \eqref{ex_linear} in Example \ref{linear_field}, if we have $\sum_{\mathbf{s} \in\mathbb{Z}^2} |a_\mathbf{s}| |\mathbf{s}|^r < \infty$ for some $r > 1/2$, $\mathbb{E}(\varepsilon_{\mathbf{0}}^{8}) < \infty$ and the innovation $\varepsilon_{\mathbf{0}}$ satisfies the Cram\'er's condition, for example, $\{\varepsilon_\mathbf{t}\}_{\mathbf{t}\in \mathbb{Z}^2}$ are centered i.i.d. Gaussian white noises, then Assumption \ref{asslinear} with $r > 1/2$ holds.
	\end{example}
	\begin{example}[Linear Random Fields]
		For the linear random fields defined in \eqref{ex_linear} in Example \ref{linear_field}, if we have $\mathbb{E}(|\varepsilon_{\mathbf{0}}|^p) < \infty$ with some $p \geq 16$ and $|a_\mathbf{j}| \leq C \rho^{\|\mathbf{j}\|}$ for some $\rho \in (0,1)$ and $C>0$, then we have $\mathbb{E}\left(|V(\mathbf{0})|^{16}\right)< \infty$ and $\delta_{\mathbf{j},p}=O(\rho^{\|\mathbf{j}\|})$ which implies Assumption \ref{assgmc}, see Section 4 of  \cite{wu2011asymptotic}.
	\end{example}
\begin{example}[Volterra Fields]\label{volterra}
	Volterra Fields is a class of nonlinear random fields which plays an important role in the nonlinear system theory. Define the second order Volterra process $\left\{ V(\mathbf{t}):\mathbf{t}\in \mathbb{Z}^2\right\}$ as 
	\begin{equation}\label{ex_volterra}
	V(\mathbf{j})=\sum_{\mathbf{s}_1,\mathbf{s}_2 \in\mathbb{Z}^2}a_{\mathbf{s}_1,\mathbf{s}_2} \varepsilon_{\mathbf{j}-\mathbf{s}_1}\varepsilon_{\mathbf{j}-\mathbf{s}_2}\,,
	\end{equation}
	where $\{\varepsilon_\mathbf{t}\}_{\mathbf{t}\in \mathbb{Z}^2}$ be an i.i.d. random field with $\mathbb{E}(\varepsilon_{\mathbf{0}})=0$ and $\{a_{\mathbf{s}_1,\mathbf{s}_2}\}$ are real coefficients with $a_{\mathbf{s}_1,\mathbf{s}_2}=0$ if $s_1=s_2$. Then, by Rosenthal inequality, there exists a constant $C_p>0$ such that
	$$
	\delta_{\mathbf{j},p} = \|V(\mathbf{j})-\widetilde{V}(\mathbf{j})\|_p \leq C_p \left(A^{1/2}_{\mathbf{j}} \|\varepsilon_{\mathbf{0}}\|_2 \|\varepsilon_{\mathbf{0}}\|_p + B^{1/p}_{\mathbf{j}} \|\varepsilon_{\mathbf{0}}\|_p^2\right)\,,
	$$
	where $A_\mathbf{j}=\sum_{\mathbf{s}_1,\mathbf{s}_2 \in\mathbb{Z}^2}(a_{\mathbf{s}_1,\mathbf{j}}^2+a_{\mathbf{j},\mathbf{s}_2}^2)$ and $B_\mathbf{j}=\sum_{\mathbf{s}_1,\mathbf{s}_2 \in\mathbb{Z}^2}(|a_{\mathbf{s}_1,\mathbf{j}}|^p+|a_{\mathbf{j},\mathbf{s}_2}|^p)$. If we have $\mathbb{E}(|\varepsilon_{\mathbf{0}}|^p) < \infty$ for some $p \geq 32$ and  $a_{\mathbf{s}_1,\mathbf{s}_2}=O(\rho^{\max\{\|\mathbf{s}_1\|,\|\mathbf{s}_2\|\}})$ for some $\rho \in (0,1)$, then we have $\mathbb{E}\left(|V(\mathbf{0})|^{16}\right)< \infty$, and $\delta_{\mathbf{j},p}=O(\rho^{\|\mathbf{j}\|})$, and hence Assumption \ref{assgmc} holds.
\end{example}
	\begin{example}[Nonlinear Spatial Autoregressive Models]
		For the nonlinear spatial autoregressive models defined in \eqref{ex_autoreg} in Example \ref{autoreg}, if there exists $\{v(-\mathbf{s})\}_{\mathbf{s} \in \mathcal{N}}$ such that $G(\{v(-\mathbf{s})\}_{\mathbf{s} \in \mathcal{N}} ; \varepsilon_{\mathbf{0}}) \in \mathbb{L}^p$ for $p \geq 16$, then we have $\mathbb{E}\left(|V(\mathbf{0})|^{16}\right)< \infty$ and $\delta_{\mathbf{j},p}=O(\rho^{\|\mathbf{j}\|})$ for some $\rho \in (0,1)$ which implies Assumption \ref{assgmc}.
\end{example}

\section{Uniform Consistency for Kernel Spectral Density Estimators}\label{sec:unf_kern}
In this section, we establish the uniform consistency of the kernel spectral density estimator for spatial lattice data. The following theorem establishes the uniform consistency of $\widehat{f}_T(\mathbf{\lambda})$ in (\ref{kernelesti}) under two different sets of regularity conditions.


\begin{thm}\label{thm3}
	Suppose that Assumptions \ref{asstasymp}, \ref{asssummable} and \ref{assk1} hold.
	\begin{itemize}
		\item[(a)] If Assumptions \ref{assk3} and \ref{assautocov} with some $0 <v \leq 1/2$ hold, and the bandwidth satisfies 
		$|\mathbf{h}_T|+|\mathbf{h}_T|^{-1}|T|^{-v} \rightarrow 0$, then
		\begin{equation}\label{unif_conv_f}
		\max_{\mathbf{\lambda} \in [0,2\pi]^2}\left|\widehat{f}_T(\mathbf{\lambda})-f(\mathbf{\lambda})\right| \xrightarrow{p} 0\,.
		\end{equation}
		\item[(b)] If Assumptions \ref{assk4}, \ref{assspecpostive}, \ref{assdepenmeasure} and \ref{assgmc} hold with some $p \geq 2$, $\mathbb{E}(|V(\mathbf{0})|^{\phi})< \infty$ for some $ 4< \phi \leq 8$, and the bandwidth satisfies $|\mathbf{h}_T| \rightarrow 0,~ (|\mathbf{h}_T||T|^\eta)^{-1}=O(1)$ for some $0< \eta < \frac{(\phi-4)}{\phi}$, then
		\begin{equation*}
		\max_{\mathbf{\lambda} \in [0,2\pi]^2}\left|\widehat{f}_T(\mathbf{\lambda})-f(\mathbf{\lambda})\right| \xrightarrow{p} 0\,.
		\end{equation*}
	\end{itemize}
\end{thm}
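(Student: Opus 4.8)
The plan is to split the error pointwise as
\[
\widehat{f}_T(\mathbf{\lambda})-f(\mathbf{\lambda})
=\bigl(\widehat{f}_T(\mathbf{\lambda})-\mathbb{E}\widehat{f}_T(\mathbf{\lambda})\bigr)
+\bigl(\mathbb{E}\widehat{f}_T(\mathbf{\lambda})-f(\mathbf{\lambda})\bigr),
\]
and to bound $\sup_{\mathbf{\lambda}\in[0,2\pi]^2}$ of each piece separately. For the deterministic bias I would pass to the lag-window form supplied by Lemma \ref{lemequivalent}, writing (up to the normalisation in \eqref{kernelesti}) $\mathbb{E}\widehat{f}_T(\mathbf{\lambda})=(4\pi^2)^{-1}\sum_{\mathbf{r}}k(\mathbf{h}_T\odot\mathbf{r})\,\mathbb{E}R_V(\mathbf{r})\,e^{-i\mathbf{\lambda}'\mathbf{r}}$, where $k$ is the inverse transform in \eqref{smallk} and $k(\mathbf{0})=\int K=1$ by Assumption \ref{assk1}. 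Since $\mathbb{E}R_V(\mathbf{r})=\gamma(\mathbf{r})\prod_{k}\left(1-|r_k|/d_k\right)_+\to\gamma(\mathbf{r})$ and $k(\mathbf{h}_T\odot\mathbf{r})\to k(\mathbf{0})=1$ as $\mathbf{h}_T\to\mathbf{0}$, the bias is bounded, uniformly in $\mathbf{\lambda}$, by $(4\pi^2)^{-1}\sum_{\mathbf{r}}|k(\mathbf{h}_T\odot\mathbf{r})\mathbb{E}R_V(\mathbf{r})-\gamma(\mathbf{r})|$, which vanishes by dominated convergence using the summability in Assumption \ref{asssummable}. This argument is common to both parts; only the stochastic term distinguishes (a) from (b).

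For part (a) I would use the same lag-window form to write the centred estimator as $(4\pi^2)^{-1}\sum_{\mathbf{r}}k(\mathbf{h}_T\odot\mathbf{r})\,[R_V(\mathbf{r})-\mathbb{E}R_V(\mathbf{r})]\,e^{-i\mathbf{\lambda}'\mathbf{r}}$. Bounding $|e^{-i\mathbf{\lambda}'\mathbf{r}}|\le1$ removes the dependence on $\mathbf{\lambda}$, so that
\[
\sup_{\mathbf{\lambda}}\bigl|\widehat{f}_T(\mathbf{\lambda})-\mathbb{E}\widehat{f}_T(\mathbf{\lambda})\bigr|
\le\frac{1}{4\pi^2}\Bigl(\sup_{\mathbf{r}}|R_V(\mathbf{r})-\mathbb{E}R_V(\mathbf{r})|\Bigr)\sum_{\mathbf{r}}|k(\mathbf{h}_T\odot\mathbf{r})|.
\]
Assumption \ref{assautocov} gives the first factor as $O_p(|T|^{-v})$, while a Riemann-sum comparison together with the integrability in Assumption \ref{assk3} gives $\sum_{\mathbf{r}}|k(\mathbf{h}_T\odot\mathbf{r})|=O(|\mathbf{h}_T|^{-1})$. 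The product is $O_p(|\mathbf{h}_T|^{-1}|T|^{-v})$, which is $o_p(1)$ exactly under the bandwidth hypothesis $|\mathbf{h}_T|^{-1}|T|^{-v}\to0$.

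For part (b) no uniform control on the sample autocovariances is available, so I would run a grid-and-oscillation argument at the Fourier resolution. Cover $[0,2\pi]^2$ by the $N\asymp|T|$ Fourier frequencies $\lambda_{\mathbf{j}}$, whose mesh in direction $k$ is $2\pi/d_k$. On each cell, the Lipschitz continuity and compact support $[-1,1]^2$ of $k$ (Assumption \ref{assk4}) make $\widehat{f}_T$ Lipschitz with a random constant of order $\max_k 1/{h_T}_k$, so the oscillation across a cell is $O_p(1/({h_T}_k d_k))=o_p(1)$ by the standing condition ${h_T}_k d_k\to\infty$. At each grid point the centred estimator has variance of order $(|\mathbf{h}_T||T|)^{-1}$, and Assumptions \ref{assdepenmeasure} and \ref{assgmc}, fed into a Rosenthal-type inequality, upgrade this to the moment bound $\mathbb{E}|\widehat{f}_T(\lambda_{\mathbf{j}})-\mathbb{E}\widehat{f}_T(\lambda_{\mathbf{j}})|^{\phi/2}\lesssim(|\mathbf{h}_T||T|)^{-\phi/4}$. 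A union bound with Markov's inequality then gives
\[
\mathbb{P}\Bigl(\max_{\mathbf{j}}\bigl|\widehat{f}_T(\lambda_{\mathbf{j}})-\mathbb{E}\widehat{f}_T(\lambda_{\mathbf{j}})\bigr|>\varepsilon\Bigr)
\lesssim |T|\,(|\mathbf{h}_T||T|)^{-\phi/4}\,\varepsilon^{-\phi/2},
\]
and substituting $|\mathbf{h}_T|\gtrsim|T|^{-\eta}$ from $(|\mathbf{h}_T||T|^{\eta})^{-1}=O(1)$ shows this tends to zero precisely when $\eta<(\phi-4)/\phi$.

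The main obstacle is the moment bound in part (b): with only $\mathbb{E}|V(\mathbf{0})|^{\phi}<\infty$ for $4<\phi\le8$, the stronger uniform result of Theorem \ref{thm_periodogram}(b) (which presumes sixteen finite moments) is unavailable, so one must derive the $(\phi/2)$-th moment estimate for the smoothed periodogram directly. The delicate part is converting the geometric-moment-contraction bound of Assumption \ref{assgmc} into a Rosenthal-type inequality with the correct variance scaling $(|\mathbf{h}_T||T|)^{-1}$, and then balancing the three rates---the grid cardinality $|T|$, the per-cell Lipschitz constant $\max_k 1/{h_T}_k$, and the moment order $\phi$---so that the maximum over the grid and the oscillation within cells are controlled simultaneously. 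Part (a) and the bias analysis are comparatively routine.
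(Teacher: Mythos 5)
Your treatment of the bias and of part (a) essentially reproduces the paper's argument (the decomposition into $\sup_{\mathbf{r}}|R_V(\mathbf{r})-\mathbb{E}R_V(\mathbf{r})|\cdot\sum_{\mathbf{r}}|k(j_1{h_T}_1,j_2{h_T}_2)|=O_p(|T|^{-v})\cdot O(|\mathbf{h}_T|^{-1})$ is exactly the paper's bound on the term $a_2$, and the dominated-convergence bias bound is its \eqref{dev_proof_51b}). One caveat: you route the replacement of $\widehat{R}_V$ (centred at $\bar V_T$) by $R_V$ (centred at $\mu$) through Lemma \ref{lemequivalent}, but that lemma assumes Assumption \ref{assdepenmeasure} with $p>2$, which is not among the hypotheses of Theorem \ref{thm3}(a); the paper instead controls this mean-estimation effect directly as a separate term $a_1$ using only Assumptions \ref{asssummable} and \ref{assk3}. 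This is repairable, but as written your part (a) borrows a hypothesis it does not have.

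Part (b) is where there is a genuine gap. Your entire argument rests on the moment bound $\mathbb{E}\bigl|\widehat{f}_T(\lambda_{\mathbf{j}})-\mathbb{E}\widehat{f}_T(\lambda_{\mathbf{j}})\bigr|^{\phi/2}\lesssim(|\mathbf{h}_T||T|)^{-\phi/4}$, which you attribute to ``Assumptions \ref{assdepenmeasure} and \ref{assgmc} fed into a Rosenthal-type inequality.'' No such inequality is available off the shelf here: the centred estimator is $|T|^{-1}\sum_{\mathbf{j}}(U_{\mathbf{j}}-\mathbb{E}U_{\mathbf{j}})$ with $U_{\mathbf{j}}=\sum_{\mathbf{l}}V(\mathbf{j})V(\mathbf{j}+\mathbf{l})\alpha_{\mathbf{l}}$ a quadratic form over $|\mathbf{h}_T|^{-1}$ lags of a field that is only in $\mathbb{L}^{\phi}$ with $4<\phi\le 8$, so each $U_{\mathbf{j}}$ has only a $(\phi/2)$-th moment and the summands are dependent across both $\mathbf{j}$ and the inner lags. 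Establishing that the $(\phi/2)$-th moment of the sum scales as the $(\phi/4)$-th power of its variance $|T||\mathbf{h}_T|^{-1}$ is precisely the technical heart of the theorem, and asserting it does not discharge it. The paper's proof (Theorem \ref{AthmC4}) does the work by a completely different mechanism: a $2m$-dependent coupling with $m\asymp\log|T|$, a partition into four interleaved families of genuinely independent blocks, truncation of the block sums at $c_T=\sqrt{|T||\mathbb{B}_T|}(\log|T|)^{-1/2}$, Bernstein's inequality for the truncated parts and a Chebyshev/Lemma \ref{AlemC5} bound for the remainders, all over a grid of only $\asymp|\mathbb{B}_T|\log^2|\mathbb{B}_T|$ frequencies combined with a maximal inequality for band-limited trigonometric polynomials — together with the cumulant summability of Lemma \ref{AlemC2} to get the variance scaling of Lemma \ref{AlemC3}. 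Your union bound over all $|T|$ Fourier frequencies also forces the per-point tail to beat $|T|^{-1}$ rather than $|\mathbb{B}_T|^{-1}$, which makes the unproved moment inequality exactly sharp at the stated range $\eta<(\phi-4)/\phi$, leaving no slack. Until the $(\phi/2)$-th moment bound is actually proved under GMC with only $\phi\le 8$ moments, part (b) is not established.
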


The following examples fulfill the required conditions for the underlying random fields in Theorem \ref{thm3}.
	\begin{example}[Linear Random Fields]
		For the linear random fields defined in \eqref{ex_linear} in Example \ref{linear_field}, if we have $\mathbb{E}(|\varepsilon_{\mathbf{0}}|^p) < \infty$ with some $p \geq 4$ and $\sum_{\mathbf{s} \in\mathbb{Z}^2} |a_\mathbf{s}| < \infty$, then Assumptions \ref{assautocov} with $v=1/2$ holds.
	\end{example}
	\begin{example}[Linear Random Fields]
		For the linear random fields defined in \eqref{ex_linear} in Example \ref{linear_field}, if we have $\mathbb{E}(|\varepsilon_{\mathbf{0}}|^p) < \infty$ with some $ 4< p \leq 8$ and $|a_\mathbf{j}| \leq C \rho^{\|\mathbf{j}\|}$ for some $\rho \in (0,1)$ and $C>0$, then $\mathbb{E}(|V(\mathbf{0})|^{\phi})< \infty$ for some $ 4< \phi \leq 8$, Assumption \ref{assdepenmeasure} with some $p \geq 2$ and  Assumption \ref{assgmc} hold since $\sum_{\mathbf{j} \in \mathbb{Z}^2} \delta_{\mathbf{j},p} = \sum_{\mathbf{j} \in \mathbb{Z}^2} \|V(\mathbf{j})-\widetilde{V}(\mathbf{j})\|_p = \sum_{\mathbf{j} \in \mathbb{Z}^2} |a_\mathbf{j}| \|\varepsilon_{\mathbf{0}}-{\widetilde{\varepsilon}}_{\mathbf{0}}\|_p < \infty$ and $\delta_{\mathbf{j},p}=O(\rho^{\|\mathbf{j}\|})$ respectively.
	\end{example}
	\begin{example}[Volterra Fields]
		For the second order Volterra process defined in \eqref{ex_volterra} in Example \ref{volterra}, if $\mathbb{E}(|\varepsilon_{\mathbf{0}}|^p) < \infty$ for some $ 8< p \leq 16$ and  $a_{\mathbf{s}_1,\mathbf{s}_2}=O(\rho^{\max\{\|\mathbf{s}_1\|,\|\mathbf{s}_2\|\}})$ for some $\rho \in (0,1)$, then we have $\mathbb{E}(|V(\mathbf{0})|^{\phi})< \infty$ for some $ 4< \phi \leq 8$,  and Assumptions \ref{assdepenmeasure} with $p \geq 2$ and \ref{assgmc} hold since $\delta_{\mathbf{j},p}=O(\rho^{\|\mathbf{j}\|})$.
	\end{example}
	\begin{example}[Nonlinear Spatial Autoregressive Models]
		For the nonlinear spatial autoregressive models defined in \eqref{ex_autoreg} in Example \ref{autoreg}, if there exists $\{v(-\mathbf{s})\}_{\mathbf{s} \in \mathcal{N}}$ such that $G(\{v(-\mathbf{s})\}_{\mathbf{s} \in \mathcal{N}} ; \varepsilon_{\mathbf{0}}) \in \mathbb{L}^p$ for some $4 < p \leq 8$. Then, we have $\mathbb{E}(|V(\mathbf{0})|^{p})< \infty$ and $\delta_{\mathbf{j},p}=O(\rho^{\|\mathbf{j}\|})$ for some $\rho \in (0,1)$ which implies Assumption \ref{assdepenmeasure} with $p \geq 2$ and Assumption \ref{assgmc}.
	\end{example}

By Assumptions \ref{assk1}, \ref{assk5}, and the continuity of the spectral density function $f(\mathbf{\lambda})$, Theorem \ref{thm_periodogram}(b) implies the uniform consistency of $\widehat{f}_T(\mathbf{\lambda})$ shown in Theorem \ref{thm3}. 
Note that Theorem \ref{thm_periodogram}(b) requires stronger conditions and thus achieves a stronger result than Theorem \ref{thm3}.
To compare the results of Theorem \ref{thm_periodogram}(b) and Theorem \ref{thm3}, we compare the required moment and linearity conditions of the two theorems. First, the moment condition in Theorem \ref{thm3} is generally weaker than that of Theorem \ref{thm_periodogram}(b). For Theorem \ref{thm3}(a), Assumption \ref{assautocov} with $v=1/2$ is satisfied under the existence of $(4+\delta)$-th moment and short-range weak dependence condition of the underlying fields, for example, mixing conditions, by which some forms of invariance principle or central limit theorem can be established. The moment condition required for Theorem \ref{thm3}(b) is similar to that of Theorem \ref{thm3}(a), whereas Theorem \ref{thm_periodogram}(b) requires the existence of $8$-th moment. Note that Assumption \ref{asslinear} with $r > 1/2$ which is required in Theorem \ref{thm_periodogram} implies Assumption \ref{assautocov} with $v=1/2$ which is required in Theorem \ref{thm3}(a). Second, Theorem \ref{thm3} allows non-linearity of the underlying fields, whereas Theorem \ref{thm_periodogram}(b) requires linearity of the underlying fields. As a result, the required conditions for Theorem \ref{thm_periodogram}(b) is in general stronger than that of Theorem \ref{thm3}. 

\section{Asymptotic Results on Estimated Spatial Fields}\label{sec:asymp_estimated}

In many applications, the Fourier coefficients and kernel spectral density estimators are not applied directly to a stationary spatial data set $\{V(\mathbf{t}): \mathbf{t} \in T\}$, but to an estimate $\{\widehat{V}(\mathbf{t}):\mathbf{t} \in T\}$ computed from the spatial data $\{Y(\mathbf{t}): \mathbf{t} \in T\}$. 
For example, if the observed spatial data is non-stationary, then it is common to apply 
detrending, smoothing, filtering, or spatial regression to obtain a residual process, which is approximately stationary for statistical analysis. The residual process can be regarded as an estimated field of an unobserved stationary random field. 
The following theorem extends the asymptotic results in Theorems \ref{thm3}, \ref{thm2} and \ref{thm_periodogram} to estimated fields. We use a subscript $\widehat{V}$ (resp. $V$) on the notation to indicate the use of $\widehat{V}$ (resp. $V$) in the calculations, e.g. $x_{\widehat{V}}(j), y_{\widehat{V}}(j)$ (resp. $x_{V}(j), y_{V}(j)$) denote the Fourier coefficients based on $\widehat{V}(\cdot)$ (resp. $V(\cdot)$).

\begin{thm}\label{thm_estimated}
	Suppose that for the spatial data $\{Y(\cdot)\}$, we have an estimator $\widehat{V}(\cdot)$ of $V(\cdot)$ such that
	\begin{equation*}
	\frac{1}{|T|} \sum_{\mathbf{t} \in T}\left(V(\mathbf{t})-\widehat{V}(\mathbf{t})\right)^2=o_p\left(\alpha^{-1}_T\right)\,,
	\end{equation*}
	as $\alpha_T \rightarrow \infty$. Furthermore, assume that the kernel function of the spectral density estimator (\ref{kernelesti}) satisfies Assumptions \ref{assk1} and \ref{assk2}; and the kernel function used in defining the weights $p_{\mathbf{s},T}$  in \eqref{def_weight} satisfies Assumptions \ref{assk1} and \ref{assk2}. 	
	Then given $\{Y(\cdot)\}$, we have the followings hold:
	\begin{itemize}
		\item[(a)] Under the setting of Theorem \ref{thm3} and $\alpha_T=O\left(|\mathbf{h}_T|^{-1}\right)$, then we have
	\begin{equation*}
		\sup_{\mathbf{j} \in T}|\widehat{f}_V(\mathbf{\lambda}_\mathbf{j})-\widehat{f}_{\widehat{V}}(\mathbf{\lambda}_\mathbf{j})|= 
		o_p(1)\,.
	\end{equation*}
	
		\item[(b)]
		Under the setting of Theorem \ref{thm2} and $\alpha_T=O\big(|T|^{\frac{q-1}{q}}\big)$ for $q = 4+\epsilon$ with $\epsilon \in (0,1)$ as defined in Theorem \ref{thm2}(e), then we have
		\begin{itemize}
			\item[(i)]
			\begin{equation*}
			\frac{1}{|N|}\sum\limits_{\mathbf{j} \in N}\frac{x_V(\mathbf{j})-x_{\widehat{V}}(\mathbf{j})+y_V(\mathbf{j})-y_{\widehat{V}}(\mathbf{j})}{\sqrt{f(\mathbf{\lambda}_\mathbf{j})}}=o_p(1)\,.
			\end{equation*}
			
			\item[(ii)]
			\begin{equation*}
			\frac{1}{|N|}\sum\limits_{\mathbf{j} \in N}\frac{I_V(\mathbf{j})-I_{\widehat{V}}(\mathbf{j})}{f(\mathbf{\lambda}_\mathbf{j})}=o_p(1)\,.
			\end{equation*}
			
			\item[(iii)]
			\begin{equation*}
			\frac{1}{|N|}\sum\limits_{\mathbf{j} \in N}\frac{I^2_V(\mathbf{j})-I^2_{\widehat{V}}(\mathbf{j})}{f^2(\mathbf{\lambda}_\mathbf{j})} = o_p(1)\,.
			\end{equation*}
			
			\item[(iv)]
			\begin{equation*}
			\frac{1}{|N|}\sum\limits_{\mathbf{j} \in N}\frac{I^q_V(\mathbf{j})-I^q_{\widehat{V}}(\mathbf{j})}{f^q(\mathbf{\lambda}_\mathbf{j})} = o_p(1)\,.
			\end{equation*}	
		\end{itemize}

		\item[(c)]
		Under the setting of Theorem \ref{thm_periodogram} and $\alpha_T=O\left(\left(|T|^{q-1}/|\mathbf{h}_T|\right)^{\frac{1}{q}}\right)$ for $q = 4+\epsilon$ with $\epsilon \in (0,1)$ as defined in Theorem \ref{thm_periodogram}(d), then we have
		\begin{itemize}
			\item[(i)]
			\begin{equation*}
			\sup_{\mathbf{k} \in N}\left|\sum\limits_{\mathbf{j} \in\mathbb{Z}^2}p_{\mathbf{j},T}[x_V(\mathbf{k}+\mathbf{j})-x_{\widehat{V}}(\mathbf{k}+\mathbf{j})+y_V(\mathbf{k}+\mathbf{j})-y_{\widehat{V}}(\mathbf{k}+\mathbf{j})]\right| = o_p(1)\,.
			\end{equation*}
			
			\item[(ii)]
			\begin{equation*}
			\sup_{\mathbf{k} \in N}\left|\sum\limits_{\mathbf{j} \in\mathbb{Z}^2}p_{\mathbf{j},T}(I_V(\mathbf{k}+\mathbf{j})-I_{\widehat{V}}(\mathbf{k}+\mathbf{j}))\right|= o_p(1)\,.
			\end{equation*}
			
			\item[(iii)]
			\begin{equation*}
			\sup_{\mathbf{k} \in N}\sum\limits_{\mathbf{j} \in\mathbb{Z}^2}p_{\mathbf{j},T}(I^2_V(\mathbf{k}+\mathbf{j})-I^2_{\widehat{V}}(\mathbf{k}+\mathbf{j})) = o_p(1)\,.
			\end{equation*}
			
			\item[(iv)]
			\begin{equation*}
			\sup_{\mathbf{k} \in N}\sum\limits_{\mathbf{j} \in\mathbb{Z}^2}p_{\mathbf{j},T}(I^q_V(\mathbf{k}+\mathbf{j})-I^q_{\widehat{V}}(\mathbf{k}+\mathbf{j}))= o_p(1)\,.
			\end{equation*}	
		\end{itemize}
	\end{itemize}
\end{thm}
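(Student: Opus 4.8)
The plan is to reduce the entire theorem to one linear observation plus two consequences of Parseval's relation. Write $W(\mathbf{t})=V(\mathbf{t})-\widehat{V}(\mathbf{t})$ and let $x_W,y_W,I_W$ be the Fourier coefficients and periodogram of $W$. Since the transform \eqref{eq:a} is linear, $x_V(\mathbf{j})-x_{\widehat{V}}(\mathbf{j})=x_W(\mathbf{j})$ and $y_V(\mathbf{j})-y_{\widehat{V}}(\mathbf{j})=y_W(\mathbf{j})$ hold exactly, while Parseval's identity on $T$ gives $\sum_{\mathbf{j}\in T}I_W(\mathbf{j})=\sum_{\mathbf{t}\in T}W^2(\mathbf{t})$ and a Cauchy--Schwarz bound on the summatory transform gives the pointwise estimate $\sup_{\mathbf{j}}I_W(\mathbf{j})\le\sum_{\mathbf{t}\in T}W^2(\mathbf{t})$. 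Feeding the hypothesis $|T|^{-1}\sum_{\mathbf{t}}W^2(\mathbf{t})=o_p(\alpha_T^{-1})$ into these yields the two master estimates $\sum_{\mathbf{j}\in T}I_W(\mathbf{j})=o_p(|T|/\alpha_T)$ and $\sup_{\mathbf{j}}I_W(\mathbf{j})=o_p(|T|/\alpha_T)$, which I would establish first. I then record the algebraic identities converting each target into a cross term plus a pure-error term: $I_V-I_{\widehat{V}}=2(x_{\widehat{V}}x_W+y_{\widehat{V}}y_W)+I_W$, hence $|I_V-I_{\widehat{V}}|\le 2\sqrt{I_{\widehat{V}}I_W}+I_W$, and for powers $|I_V^m-I_{\widehat{V}}^m|\le m\,|I_V-I_{\widehat{V}}|\max(I_V,I_{\widehat{V}})^{m-1}$ together with $I_{\widehat{V}}\le 2I_V+2I_W$. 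Throughout, Assumption $\mathcal{P}.3$ lets me replace $f(\lambda_\mathbf{j})$ by its lower bound $c$ where convenient, and $|N|\asymp|T|$.

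For part (a) I write $\widehat{f}_V-\widehat{f}_{\widehat{V}}=(4\pi^2)^{-1}\sum_{\mathbf{j}}w_\mathbf{j}(\lambda)(I_V(\mathbf{j})-I_{\widehat{V}}(\mathbf{j}))$ with $w_\mathbf{j}(\lambda)$ the normalized kernel weights of \eqref{kernelesti}. Assumptions $\mathcal{K}.1$ and $\mathcal{K}.2$ give $\sup_\mathbf{j}w_\mathbf{j}(\lambda)=O((|\mathbf{h}_T||T|)^{-1})$ uniformly in $\lambda$, so $\sum_{\mathbf{j}}w_\mathbf{j}(\lambda)I_W(\mathbf{j})=o_p((|\mathbf{h}_T|\alpha_T)^{-1})=o_p(1)$ under $\alpha_T=O(|\mathbf{h}_T|^{-1})$. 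The cross term is handled by Cauchy--Schwarz, $\sum_\mathbf{j}w_\mathbf{j}\sqrt{I_VI_W}\le(\sum_\mathbf{j}w_\mathbf{j}I_V)^{1/2}(\sum_\mathbf{j}w_\mathbf{j}I_W)^{1/2}$, where $\sum_\mathbf{j}w_\mathbf{j}I_V=4\pi^2\widehat{f}_V(\lambda)=O_p(1)$ uniformly by Theorem \ref{thm3} (and boundedness of $f$). Every bound being uniform in $\lambda$, the supremum over $\mathbf{j}\in T$ is automatic.

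Parts (b) and (c) are identical in spirit, differing only by uniform weights $1/|N|$ averaged over $N$ (using Theorem \ref{thm2}) versus kernel weights $p_{\mathbf{s},T}$ with a supremum over $\mathbf{k}\in N$ (using Theorem \ref{thm_periodogram}). In both, the first-order statements (i), (ii) follow from Cauchy--Schwarz exactly as in (a): the $I_V$-moment factors are $O_p(1)$ by Theorem \ref{thm2}(c) / Theorem \ref{thm_periodogram}(b), the error factors are $o_p(|T|/\alpha_T)$ from the master estimates, and here $\alpha_T\to\infty$ (respectively $|\mathbf{h}_T|\alpha_T=(|\mathbf{h}_T||T|)^{(q-1)/q}\to\infty$) already suffices. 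For the power statements (iii), (iv) I expand with the identities above; the dangerous monomials are those carrying $I_W$ to a power exceeding one, such as $I_W^q$. I apply H\"older with exponents (for instance $2q/(2q-1)$ and $2q$) chosen so that the $I_V$ factor is raised exactly to the available power $q$, then reduce the resulting $\sum I_W^m$ to $(\sup_\mathbf{j}I_W)^{m-1}\sum I_W=o_p((|T|/\alpha_T)^m)$ by the pointwise master bound. Collecting exponents makes each term $o_p(|T|^{m-1}/\alpha_T^m)$, respectively $o_p(|T|^{m-1}/(|\mathbf{h}_T|\alpha_T^m))$, which the prescribed rates $\alpha_T\asymp|T|^{(q-1)/q}$ and $\alpha_T\asymp(|T|^{q-1}/|\mathbf{h}_T|)^{1/q}$ drive to $o_p(1)$; the calibration is tightest at the top power $m=q$, and since $q>2$ with $|\mathbf{h}_T||T|\to\infty$ the same $\alpha_T$ dominates the requirement at $m=2$. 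Uniformity over $\mathbf{k}$ in (c) is inherited because the error factors are bounded by the $\mathbf{k}$-free quantity $\sup_\mathbf{j}p_{\mathbf{j},T}\cdot\sum_\mathbf{m}I_W(\mathbf{m})$ and the $I_V$-moment factors are uniform by Theorem \ref{thm_periodogram}.

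The main obstacle is precisely the high-power parts (iii)--(iv) and their uniform counterparts (c)(iii)--(iv). The tension is that the estimation error is controlled only in $L^2$, through the single sum $\sum_\mathbf{t}W^2(\mathbf{t})$, whereas the periodogram enters to powers up to $q=4+\epsilon$; splitting $I_W$ evenly against $I_V$ by Cauchy--Schwarz would demand control of $\sum I_V^{2q-1}$, which lies beyond the moment bound of Theorem \ref{thm2}(e) / Theorem \ref{thm_periodogram}(d). The decisive device is the pointwise Parseval bound $\sup_\mathbf{j}I_W\le\sum_\mathbf{t}W^2$, which lets every surplus factor of $I_W$ be absorbed at the sup-norm rate while leaving exactly one factor inside the $L^1$ sum and pinning the $I_V$-moment at its maximal available order $q$. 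The remaining work is the bookkeeping that verifies a single prescribed $\alpha_T$ simultaneously controls all powers and that the estimates in (c) are uniform in $\mathbf{k}$.
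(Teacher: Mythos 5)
Your proposal is correct and follows essentially the same route as the paper's proof: your two master estimates are exactly the paper's bounds $\sup_{\mathbf{j}}(|F_T(\mathbf{j})|+|G_T(\mathbf{j})|)=o_p(|T|\alpha_T^{-1/2})$ (pointwise Cauchy--Schwarz) and $\sum_{\mathbf{j}\in T}(F_T^2(\mathbf{j})+G_T^2(\mathbf{j}))=o_p(|T|^2\alpha_T^{-1})$ (the orthogonality/Parseval identity), followed by the same decomposition $I_V-I_{\widehat{V}}=-I_W+2(x_Vx_W+y_Vy_W)$, the same kernel-weight bound $O((|\mathbf{h}_T||T|)^{-1})$ from Assumptions $\mathcal{K}$.1--$\mathcal{K}$.2, and Cauchy--Schwarz against the periodogram moments supplied by Theorems \ref{thm3}, \ref{thm2} and \ref{thm_periodogram}. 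Your handling of the power terms (iii)--(iv) via $|a^m-b^m|\le m|a-b|\max(a,b)^{m-1}$ and H\"older is if anything more careful than the paper's, which passes through the looser relations $I_V^q-I_{\widehat{V}}^q\asymp(I_V-I_{\widehat{V}})^q$ and $\sum_{\mathbf{j}}(I_V-I_{\widehat{V}})^q\asymp\big(\sum_{\mathbf{j}}(I_V-I_{\widehat{V}})^2\big)^{q/2}$, and your calibration (the $\alpha_T$ exponents cancelling exactly at $m=q$) matches the paper's prescribed rates.
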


\section{Proofs}\label{proof}
\subsection{Proof of Theorem \ref{thm21}}

\begin{proof}[Proof of Theorem \ref{thm21}]
	The proof of Theorem \ref{thm21} goes as follows. For presentational clarity we focus on the real parts of $x(\mathbf{j})+iy(\mathbf{j})$. The argument easily extends to general cases which include the complex parts $\{y(\mathbf{j})\}$. We first express the linear combination of Fourier coefficients $\{x(\mathbf{j})\}$ as linear combinations of $\{V(\mathbf{k})\}$, and then we construct a $2l_T$-dependent field $\{\widehat{V}(\mathbf{k})\}$, where $l_T$ is defined below, such that the central limit theorem can be applied to the linear combination of its truncated counterpart $\{U(\mathbf{k})\}$. The error of the above approximations are then examined.
	
	Consider $F_{N1}=\{(\mathbf{j},1): \mathbf{j} \in N\}$ and hence $\widetilde{s}_T(\mathbf{j},1)$ corresponds to the real parts of $x(\mathbf{j})+iy(\mathbf{j})$. For notation simplicity, denote $\widetilde{s}_T(\mathbf{j}) = \widetilde{s}_T(\mathbf{j},1)$.
	 Also, we assume here $\mu=\mathbb{E}(V(\mathbf{0}))=0$. Denote $A=(\mathbf{a}_1,...,\mathbf{a}_q) \subset F_{N1}$.
	Let $H_T=\sum\limits_{\mathbf{k} \in T} \mu_\mathbf{k} V(\mathbf{k})$, where $\mu_\mathbf{k}=\mu_\mathbf{k}(\mathbf{c},A)=\sum\limits^{q}_{i=1}\frac{c_i\cos(\mathbf{k}'\mathbf{\lambda}_{\mathbf{a}_i})}{\sqrt{2\pi^2f(\mathbf{\lambda}_{\mathbf{a}_i})}}$. Since $f_*=\min\limits_{\mathbf{\lambda} \in\mathbb{R}^2}f(\mathbf{\lambda})>0$, there exists $\mu_*$ such that $|\mu_\mathbf{k}| \leq \mu_*$ for all $\mathbf{c} \in\mathbb{R}^q, \|\mathbf{c}\|_2=1$ and $A \subset F_{N1}$. Let 
	\begin{equation*}
	d_T(\mathbf{h}) = \left\{ \begin{array}{lcr}
	\frac{1}{|T|} \sum\limits_{\mathbf{k}\in T:~\mathbf{k}-\mathbf{h} \in T} \mu_\mathbf{k} \mu_{\mathbf{k}-\mathbf{h}}\quad &\mbox{ if } \{ \mathbf{k} \in T: \mathbf{k}-\mathbf{h} \in T\} \neq \emptyset\,,\\
	0 &\mbox{ if }\{\mathbf{k} \in T: \mathbf{k}-\mathbf{h} \in T \} = \emptyset\,,
	\end{array}\right. \ \ \text{for} \ \mathbf{h} \in\mathbb{Z}^2\,,
	\end{equation*}
	where $\emptyset$ is an empty set. Note that $\sum\limits_{\mathbf{k} \in T}\cos(\mathbf{k}'\mathbf{\lambda}_{\mathbf{a}_u})\cos((\mathbf{k}+\mathbf{h})'\mathbf{\lambda}_{\mathbf{a}_v})=\frac{|T|}{2}\cos(\mathbf{h}'\mathbf{\lambda}_{\mathbf{a}_u})\mathds{1}_{\left\{\mathbf{a}_u=\mathbf{a}_v\right\}}\,$.
	Thus, it is easily seen that there exists a constant $K_0>0$ such that for all $\mathbf{h} \in\mathbb{Z}^2$,
	\begin{equation*}
	\tau_T(\mathbf{h})=\sup\limits_{\mathbf{a}_1,...,\mathbf{a}_q \in F_{N1}} \sup\limits_{\mathbf{c} \in\mathbb{R}^q,\|\mathbf{c}\|_2=1} \left|d_T(\mathbf{h})-\sum\limits^{q}_{i=1}c^2_i\frac{\cos(\mathbf{h}'\mathbf{\lambda}_{\mathbf{a}_i})}{4\pi^2f(\mathbf{\lambda}_{\mathbf{a}_i})}\right| \leq \frac{K_0|\mathbf{h}|}{|T|}\,.
	\end{equation*}
	Clearly $\tau_T(\mathbf{h}) \leq \mu^2_*+(4\pi^2f_*)^{-1} := K_1$. By using the equality $f(\mathbf{\lambda}_{\mathbf{a}_i})=\frac{1}{4\pi^2}\sum\limits_{\mathbf{h} \in\mathbb{Z}^2}\gamma(\mathbf{h})\cos(\mathbf{h}'\mathbf{\lambda}_{\mathbf{a}_i})$, hence $\sum\limits_{\mathbf{h} \in\mathbb{Z}^2}\frac{\gamma(\mathbf{h})\cos(\mathbf{h}'\mathbf{\lambda}_{\mathbf{a}_i})}{4\pi^2f(\mathbf{\lambda}_{\mathbf{a}_i})}=1$, we have uniformly over $(\mathbf{a}_1,...,\mathbf{a}_q) \subset F_{N1}$ and $\mathbf{c}$ that 
	\begin{equation*}
	\begin{aligned}
	\left|\frac{\|H_T\|^2_2}{|T|}-1\right| &= \left|\frac{1}{|T|}\mathbb{E}\left(\sum\limits_{\mathbf{k}_1 \in T}\mu_{\mathbf{k}_1}V(\mathbf{k}_1)\sum\limits_{\mathbf{k}_2 \in T}\mu_{\mathbf{k}_2}V(\mathbf{k}_2)\right)-1\right|\\
	&= \left|\frac{1}{|T|}\sum\limits_{\mathbf{k}_1 \in T}\sum\limits_{\mathbf{k}_2 \in T}\mu_{\mathbf{k}_1}\mu_{\mathbf{k}_2}\gamma(\mathbf{k}_1-\mathbf{k}_2)-1\right| \\
	&=\left|\sum\limits_{\mathbf{h} \in\mathbb{Z}^2}d_T(\mathbf{h})\gamma(\mathbf{h})-1\right|\\
	& = \left|\sum\limits_{\mathbf{h} \in\mathbb{Z}^2}d_T(\mathbf{h})\gamma(\mathbf{h})-\sum\limits^{q}_{i=1}c^2_i\right|\\
	& = \left|\sum\limits_{\mathbf{h} \in\mathbb{Z}^2}d_T(\mathbf{h})\gamma(\mathbf{h})-\sum\limits^{q}_{i=1}\left(c^2_i\sum\limits_{\mathbf{h} \in\mathbb{Z}^2}\frac{\gamma(\mathbf{h})\cos(\mathbf{h}'\mathbf{\lambda}_{\mathbf{a}_i})}{4\pi^2f(\mathbf{\lambda}_{\mathbf{a}_i})}\right)\right|\\
	&=\left|\sum\limits_{\mathbf{h} \in\mathbb{Z}^2}\gamma(\mathbf{h})\left(d_T(\mathbf{h})-\sum\limits^{q}_{i=1}c^2_i\frac{\cos(\mathbf{h}'\mathbf{\lambda}_{\mathbf{a}_i})}{4\pi^2f(\mathbf{\lambda}_{\mathbf{a}_i})}\right)\right|
	\leq \sum\limits_{\mathbf{h} \in\mathbb{Z}^2} \tau_T(\mathbf{h})\gamma(\mathbf{h})\\
	&\leq \sum\limits_{\mathbf{h} \in\mathbb{Z}^2} K_2\min \left(\frac{|\mathbf{h}|}{|T|},1\right)\gamma(\mathbf{h})\quad \mbox{where } K_2=2(K_1+K_0)\\
	& \leq \sum\limits_{\mathbf{h} \in H_1}
	K_2\frac{|\gamma(\mathbf{h})|}{|T|^{\frac{1}{2}}} + \sum\limits_{\mathbf{h} \in H_2}K_2|\gamma(\mathbf{h})| 
	\rightarrow 0\,, \quad \text{as} \quad T \rightarrow \infty\,,
	\end{aligned}
	\end{equation*}
	where $H_1=\left\{(h_1,h_2):~h_1<d_1^{\frac{1}{2}}~\text{and}~h_2<d_2^{\frac{1}{2}}\right\}$ and 
	$H_2=\left\{(h_1,h_2):~h_1 \geq d_1^{\frac{1}{2}}~\text{or}~h_2 \geq d_2^{\frac{1}{2}}\right\}$.

	Let $\widehat{H}_T=\sum\limits_{\mathbf{k} \in T}\mu_\mathbf{k}\widehat{V}(\mathbf{k})$, where $\widehat{V}(\mathbf{k})=\mathbb{E}(V(\mathbf{k})|\mathcal{F}_{\mathbf{k},<l_T>})$, $\mathcal{F}_{\mathbf{k},<l_T>}=\{\varepsilon_\mathbf{i}:\|\mathbf{i}-\mathbf{k}\| < l_T\}$, where $\|\mathbf{x}\|=\max\limits_{i=1,2}|x_i|$ for  $\mathbf{x}=(x_1,x_2) \in \mathbb{Z}^2$. Then we have that $\{\widehat{V}(\mathbf{k})\}$ are $2l_T$-dependent and  $\delta_{l_T}=\|V(\mathbf{0})-\widehat{V}(\mathbf{0})\|_2 \rightarrow 0$ as $l_T \rightarrow \infty$, and $\widehat{H}_T$ can be regarded as an approximation of $H_T$. Denote  $\mathcal{F}_{m,n}=\{\varepsilon_\mathbf{i}: \mathbf{i}=(i_1,i_2) \in \mathbb{Z}^2~\text{with}~i_1<m,~i_2<n\}$. 
	
	\renewcommand{\baselinestretch}{1}
	\begin{figure}[hptb]
		\centering \caption{\small {\it Illustration of the locations of $\varepsilon_\mathbf{i}$ included.}}
		{\footnotesize
			\begin{tabular}{|>{\centering\arraybackslash}m{3cm}|c|c|}
				\hline
				Set: & $\mathcal{F}_{\mathbf{k},<l_T>}$ & $\mathcal{F}_{\mathbf{k},<l_T>} \bigcap \mathcal{F}_{0,0}$ \\ \hline
				Locations of $\varepsilon_\mathbf{i}$ included:&
				\resizebox{4.0cm}{4.0cm}{
					\begin{tikzpicture}
					\draw[-latex] (-0.5,0,0) -- (3,0,0);
					\draw[-latex] (0,-0.5,0) -- (0,3,0); 
					\draw[latex-latex] (1.25,1.25,0) -- (1.25,2,0);
					\draw[latex-latex] (1.25,1.25,0) -- (2,1.25,0);    
					\draw[pattern=north west lines, pattern color=gray] (0.5,0.5) rectangle (2,2);
					\node at (1.25,1,0) {$\mathbf{k}$} node at (1,1.5,0) {$l_T$} node at (1.75,1.5,0) {$l_T$};
					\end{tikzpicture}
				} & 
				\resizebox{4.0cm}{4.0cm}{
					\begin{tikzpicture}
					\draw[-latex] (-0.25,-1,0) -- (-0.25,-0.4,0);
					\draw[-latex] (-0.5,0,0) -- (2,0,0);
					\draw[-latex] (0,-0.5,0) -- (0,2,0); 
					\draw[-] (1.5,-0.5,0) -- (1.5,1.5,0);
					\draw[-] (-0.5,1.5,0) -- (1.5,1.5,0);
					\draw[-] (-0.5,0,0) -- (-0.5,1.5,0);
					\draw[-] (-0.5,-0.5,0) -- (1.5,-0.5,0);
					\draw[latex-latex] (0.5,0.5,0) -- (0.5,-0.5,0);
					\draw[latex-latex] (0.5,0.5,0) -- (-0.5,0.5,0);    
					\draw[pattern=north west lines, pattern color=gray] (-0.5,-0.5) rectangle (0,0);
					\node at (0.6,0.7,0) {$\mathbf{k}$} node at (0.7,-0.1,0) {$l_T$} node at (-0.1,0.7,0) {$l_T$} node at (-0.25,-1.3,0) {$\mathcal{F}_{\mathbf{k},<l_T>} \bigcap \mathcal{F}_{0,0}$};
					\end{tikzpicture}
				}\\
				\hline
			\end{tabular}
		}
	\end{figure}
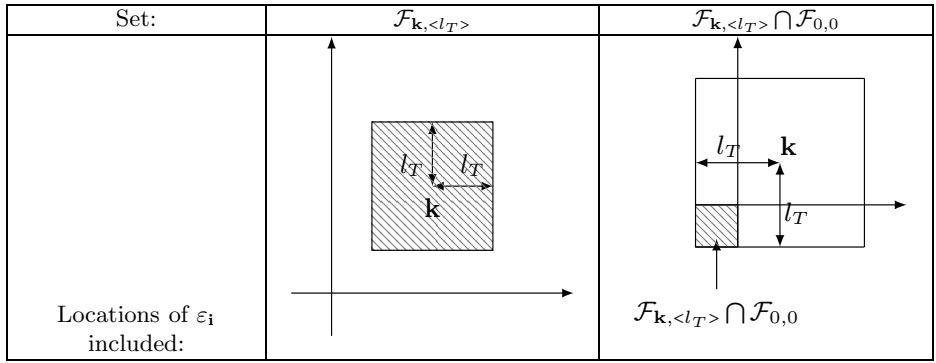
	
	Define projection operator $\mathcal{P}_\mathbf{0}$ for any $\mathcal{F}_{\infty,\infty}$-measurable random variable $X(\mathbf{k})$ as	 
	\begin{equation*}
	\begin{aligned}
	\mathcal{P}_\mathbf{0} (X(\mathbf{k})) &: = 	\mathbb{E}[X(\mathbf{k})|\mathcal{F}_{0,0}]-\mathbb{E}[X(\mathbf{k})|\mathcal{F}_{0,-1}] -\mathbb{E}[X(\mathbf{k})|\mathcal{F}_{-1,0}]+\mathbb{E}[X(\mathbf{k})|\mathcal{F}_{-1,-1}]\,.
	\end{aligned}
	\end{equation*}	
	If $\|\mathbf{k}\|<l_T$, then $\varepsilon_\mathbf{0} \in \mathcal{F}_{\mathbf{k},<l_T>}$, and we have 
	\begin{equation*}
	\begin{aligned} 
	\mathcal{P}_\mathbf{0}(\widehat{V}(\mathbf{k})) & = \mathcal{P}_\mathbf{0}[\mathbb{E}(V(\mathbf{k})|\mathcal{F}_{\mathbf{k},<l_T>})] 
	= \sum_{i,j\in\{-1,0\}} (-1)^{i+j}\mathbb{E}[\mathbb{E}(V(\mathbf{k})|\mathcal{F}_{\mathbf{k},<l_T>})|\mathcal{F}_{i,j}]\\
	&= \sum_{i,j\in\{-1,0\}} (-1)^{i+j}\mathbb{E}[V(\mathbf{k})|\mathcal{F}_{\mathbf{k},<l_T>} \bigcap \mathcal{F}_{i,j}]
	= \mathbb{E}[\mathcal{P}_\mathbf{0}(V(\mathbf{k}))|\mathcal{F}_{\mathbf{k},<l_T>} \bigcap \mathcal{F}_{0,0}]\,.
	\end{aligned}
	\end{equation*}	
	If $\|\mathbf{k}\| \geq l_T$, then $\varepsilon_\mathbf{0} \notin \mathcal{F}_{\mathbf{k},<l_T>}$, so
    $\mathcal{P}_\mathbf{0}(\widehat{V}(\mathbf{k}))=0\,.$
	Clearly, $\|\mathcal{P}_\mathbf{0}(V(\mathbf{k})-\widehat{V}(\mathbf{k}))\|_2 \leq 2\delta_{l_T}$ for all $\mathbf{k} \in\mathbb{Z}^2$.
		
	Since by Lemma 1 of \cite{EM2013}, we have $\|\mathcal{P}_\mathbf{0} V(\mathbf{j})\|_2 \leq \delta_{\mathbf{j},p}$ for all $\mathbf{j} \in \mathbb{Z}^2$, and hence by Assumption \ref{assdepenmeasure} with $p \geq 2$ and the Lebesgue dominated convergence theorem, it entails that 
	\begin{equation*}
	\begin{aligned}
	\frac{\|H_T-\widehat{H}_T\|_2}{\sqrt{|T|}} &= \left[\frac{1}{|T|}\sum\limits_{\mathbf{j} \in\mathbb{Z}^2}\|\mathcal{P}_\mathbf{j}(H_T-\widehat{H}_T)\|_2^2\right]^{\frac{1}{2}} 
= \left[\frac{1}{|T|}\sum\limits_{\mathbf{j} \in\mathbb{Z}^2}\left\|\mathcal{P}_\mathbf{j}\left\{\sum\limits_{\mathbf{k} \in T} \mu_\mathbf{k}(V(\mathbf{k})-\widehat{V}(\mathbf{k}))\right\}\right\|_2^2\right]^{\frac{1}{2}}\\
	& \leq\left [\frac{\mu^2_*}{|T|}\sum\limits_{\mathbf{j} \in\mathbb{Z}^2}\left\|\mathcal{P}_\mathbf{j}\sum\limits_{\mathbf{k} \in T}(V(\mathbf{k})-\widehat{V}(\mathbf{k}))\right\|^2_2\right]^{\frac{1}{2}}\\
	&\leq\left [\frac{\mu^2_*}{|T|}\sum\limits_{\mathbf{k} \in T}\sum\limits_{\mathbf{j} \in\mathbb{Z}^2}\left\|\mathcal{P}_\mathbf{j}(V(\mathbf{k})-\widehat{V}(\mathbf{k}))\right\|_2^2\right]^{\frac{1}{2}} 
= \left[\mu^2_*\sum\limits_{\mathbf{j} \in\mathbb{Z}^2}\left\|\mathcal{P}_\mathbf{0}(V(\mathbf{j})-\widehat{V}(\mathbf{j}))\right\|_2^2\right]^{\frac{1}{2}}\\ 
	&\leq \mu_*\sum\limits_{\mathbf{j} \in\mathbb{Z}^2}\left\|\mathcal{P}_\mathbf{0}(V(\mathbf{j})-\widehat{V}(\mathbf{j}))\right\|_2
    \leq \mu_*\sum\limits_{\mathbf{j} \in\mathbb{Z}^2}2\min(\|\mathcal{P}_\mathbf{0} V(\mathbf{j})\|_2,\delta_{l_T})\\
	&\rightarrow 0\,, \quad \text{as} \quad l_T \rightarrow \infty \,.
	\end{aligned}
	\end{equation*}
	Let $g_T(r)=r^2\mathbb{E}(\widehat{V}(\mathbf{k})^2\mathds{1}_{\{|\widehat{V}(\mathbf{k})| \geq \sqrt{|T|}/r\}})$. Since $\mathbb{E}(\widehat{V}(\mathbf{k})^2) < \infty$, $\lim\limits_{T \rightarrow \infty} g_T(r)=0$ for any fixed $r > 0$. Note that $g_T$ is non-decreasing in $r$. Then, there exists a sequence $r_T \uparrow \infty$ such that $g_T(r_T) \rightarrow 0$.
	Let $U(\mathbf{k})=\widehat{V}(\mathbf{k})\mathds{1}_{\{|\widehat{V}(\mathbf{k})| \leq \sqrt{|T|}/r_T\}}$ and $H_{T,U}=\sum\limits_{\mathbf{k} \in T} \mu_\mathbf{k} U(\mathbf{k})$. Then $\left\|U(\mathbf{k})-\widehat{V}(\mathbf{k})\right\|_2=o\left({1/r_T}\right)$. Since $U(\mathbf{k})-\widehat{V}(\mathbf{k})$ are $2l_T$-dependent,
	\begin{equation}\label{ADeq8}
	\left\|H_{T,U}-\widehat{H}_T\right\|_2 \leq \sum\limits_{\mathbf{a} \in L}\left\|\sum \mu_\mathbf{b}\left(U(\mathbf{b})-\widehat{V}(\mathbf{b})\right)\right\|_2=o\left(\frac{\sqrt{|T|}}{r_T}\right)\,,
	\end{equation}
	where $L=\{(l_1,l_2):1 \leq l_1 \leq 2l_T, 1 \leq l_2 \leq 2l_T\}$ with $|L|=4l_T^2$,  
	 and the inner sum is over $\{\mathbf{b} \in T: \mathbf{b}-\mathbf{a}=(2ml_T,2nl_T), m, n\in\mathbb{Z}^+_0=\{0,1,2,...\}\}$. See Figure \ref{partition_L} for a graphical illustration of the partition of the rectangular lattice $T$ and the set $L$.
	
	
	\begin{figure}[!htb]
		\begin{minipage}{.5\linewidth}
			\centering \caption{\small {\it The partition of the rectangular lattice $T$}}
			{\footnotesize
				\begin{tabular}{|c|}
					\hline
					\resizebox{5.0cm}{5.0cm}{
						\begin{tikzpicture}
						\draw[-latex] (-0.5,0,0) -- (2,0,0);
						\draw[-latex] (0,-0.5,0) -- (0,2,0); 
						\draw[-] (0.5,0) -- (0.5,2,0);
						\draw[-] (1,0) -- (1,2,0);
						\draw[-] (1.5,0) -- (1.5,2,0);
						\draw[-] (0,0.5,0) -- (2,0.5,0);
						\draw[-] (0,1.5,0) -- (2,1.5,0);
						\draw[-] (0,1,0) -- (2,1,0);
						\node at (0.25,0.25,0) {$L$} node at (-0.25,0.5,0) {$2l_T$} node at (0.5,-0.25,0) {$2l_T$} node at (-0.25,1,0) {$4l_T$} node at (1,-0.25,0) {$4l_T$} node at (-0.25,1.5,0) {$6l_T$} node at (1.5,-0.25,0) {$6l_T$};
						\end{tikzpicture}
					}\\
					\hline
				\end{tabular}\label{partition_L}
			}
		\end{minipage}%
		\begin{minipage}{.5\linewidth}
						\centering \caption{\small {\it The locations of the blocks $B_\mathbf{m}$}}
			{\footnotesize
				\begin{tabular}{|c|}
					\hline
					\resizebox{5.0cm}{5.0cm}{
						\begin{tikzpicture}
						\draw[-latex] (-0.5,0,0) -- (3,0,0);
						\draw[-latex] (0,-0.5,0) -- (0,3,0); 
						\draw[-] (0,0) rectangle (0.5,0.5);
						\draw[-] (0,1) rectangle (0.5,1.5);
						\draw[-] (0,2) rectangle (0.5,2.5);
						\draw[-] (1,0) rectangle (1.5,0.5);
						\draw[-] (1,1) rectangle (1.5,1.5);
						\draw[-] (1,2) rectangle (1.5,2.5);
						\draw[latex-latex] (0.5,0.25,0) -- (1,0.25,0);
						\draw[latex-latex] (0.25,0.5,0) -- (0.25,1,0);
						\draw[latex-latex] (1.25,0.5,0) -- (1.25,1,0);
						\draw[latex-latex] (0.25,1.5,0) -- (0.25,2,0);
						\draw[latex-latex] (1.25,1.5,0) -- (1.25,2,0);
						\draw[latex-latex] (0.5,1.25,0) -- (1,1.25,0);
						\draw[latex-latex] (0.5,2.25,0) -- (1,2.25,0);
						\node at (0.27,0.27,0) {$B$} node at (0.27,1.27,0) {$B$} node at (0.27,2.27,0) {$B$} node at (1.27,0.27,0) {$B$} node at (1.27,1.27,0) {$B$} node at (1.27,2.27,0) {$B$} node at (0.75,0.45,0) {$2l_T$} node at (0.75,1.45,0) {$2l_T$} node at (0.75,2.45,0) {$2l_T$} node at (0.47,0.77,0) {$2l_T$} node at (0.47,1.77,0) {$2l_T$}  node at (1.47,0.77,0) {$2l_T$} node at (1.47,1.77,0) {$2l_T$}  node at (2.1,0.25,0) {.......} node at (2.1,1.25,0) {.......} node at (2.1,2.25,0) {.......} node at (2.1,2.75,0) {.......} node at (0.25,2.75,0) {.} node at (0.25,2.55,0) {.} node at (0.25,2.95,0) {.} node at (1.25,2.75,0) {.} node at (1.25,2.55,0) {.} node at (1.25,2.95,0) {.};
						\end{tikzpicture}
					}\\
					\hline
				\end{tabular}\label{locations_B}
			}
		\end{minipage} 
	\end{figure}

	Let $p_T=\left\lfloor {r_T}^{\frac{1}{8}} \right\rfloor$ and blocks 
	\begin{eqnarray*}
		B_\mathbf{m}  = \{\mathbf{a}=(a_1,a_2) \in\mathbb{Z}^2: 1+(m_k-1)(p_T+2l_T) \leq a_k \leq p_T+(m_k-1)(p_T+2l_T)\,,~~\text{for}~k=1,2\}\,,
	\end{eqnarray*}
	where $\mathbf{m}=(m_1,m_2) \in M_B$, and 
	\begin{eqnarray*}
		M_B =  \left\{(m_1,m_2) \in \mathbb{N}^2: 1 \leq m_1 \leq m_{T_1}=\left\lfloor 1+\frac{d_1-p_T}{p_T+2l_T}\right\rfloor, 1 \leq m_2 \leq m_{T_2}=\left\lfloor 1+\frac{d_2-p_T}{p_T+2l_T}\right\rfloor\right\}.
	\end{eqnarray*}
	See Figure \ref{locations_B} for a graphical illustration of the locations of blocks in set $B_\mathbf{m}$.
	
	Define $T_\mathbf{m} = \sum\limits_{\mathbf{k} \in B_\mathbf{m}} \mu_\mathbf{k} U(\mathbf{k})$, 
	$S_T = \sum\limits_{\mathbf{m} \in M_B} T_\mathbf{m}$, 
	$R_T = H_{T,U}-S_T$, 
	$W = \frac{S_T-\mathbb{E}(S_T)}{\sqrt{|T|}}$, 
	and $\Delta = \frac{\widehat{H}_T}{\sqrt{|T|}}-W$.	Then, $T_\mathbf{m}$ are independent and $\|R_T\|_2=O\left(\sqrt{|M_B|}\right)$
	since $U(\mathbf{k})$ are $2l_T$-dependent. Note that $|\mathbb{E}(S_T)|=o\left(\frac{\sqrt{|T|}}{r_T}\right)$ with similar argument in (\ref{ADeq8}).
	Thus, by (\ref{ADeq8}),
	\begin{equation} \label{ADeq9}
	\begin{aligned}
	\sqrt{|T|} \|\Delta\|_2 &\leq |\mathbb{E}(S_T)|+\|S_T-\widehat{H}_T\|_2 \leq o\left(\frac{\sqrt{|T|}}{r_T}\right)+\|S_T-H_{T,U}\|_2+\|H_{T,U}-\widehat{H}_T\|_2\\
	&\leq o\left(\frac{\sqrt{|T|}}{r_T}\right)+O\left(\sqrt{|M_B|}\right)+o\left(\frac{\sqrt{|T|}}{r_T}\right)=O\left(\sqrt{|M_B|}\right)\,.
	\end{aligned}
	\end{equation}
	
	Since $|T_\mathbf{m}|^3 \leq \mu^3_*p^4_T\sum\limits_{\mathbf{k} \in B_\mathbf{m}}|U(\mathbf{k})|^3$ and $\mathbb{E}(U(\mathbf{k})^2) \leq \mathbb{E}(V(\mathbf{k})^2)$, we have
	$\mathbb{E}(|T_\mathbf{m}|^3)=O \left(p^6_T\frac{\sqrt{|T|}}{r_T} \right)$. 
	By the Berry-Esseen theorem, we have
	\begin{equation}\label{ADeq10}
	\begin{aligned}
	\sup_x \left|\mathbb{P}(W \leq x)-\Phi\left(\frac{x}{\|W\|_2}\right) \right| &\leq C\sum\limits_{\mathbf{m} \in M_{B}} \mathbb{E}(|T_\mathbf{m}|^3)\|S_T-\mathbb{E}(S_T)\|_2^{-3}\\
	&= O \left(|M_B|p^6_T\frac{\sqrt{|T|}}{r_T}\right) \times |T|^{-\frac{3}{2}} = O(p^{-4}_T)\,.
	\end{aligned}
	\end{equation}	
	Let $\delta=\delta_T=p^{-\frac{1}{2}}_T$. By (\ref{ADeq9}), (\ref{ADeq10}) and 
	\noindent
	\begin{equation*}
	\mathbb{P}(W \leq w-\delta)-\mathbb{P}(|\Delta| \geq \delta) \leq \mathbb{P}(W+\Delta \leq w) \leq \mathbb{P}(W \leq w+\delta)+\mathbb{P}(|\Delta| \geq \delta)\,,
	\end{equation*}
	we have 
	\begin{equation}\label{ADeq11}
	\sup_x \left|\mathbb{P}\left(\widehat{H}_T \leq \sqrt{|T|}x\right)- \Phi \left(\sqrt{|T|}x/\| \widehat{H}_T\|_2\right)\right|=O\left(p^{-4}_T+\mathbb{P}(|\Delta| \geq \delta)+ \delta +\delta^2 \right) = O( \delta)\,.
	\end{equation}
	Note that $\sup_x\left|\Phi\left(\frac{x}{\sigma_1}\right)-\Phi\left(\frac{x}{\sigma_2}\right)\right| \leq C\left|\left(\frac{\sigma_1}{\sigma_2}-1\right)\right|$ holds for some constant $C$. 
	Let $W_1=\frac{\widehat{H}_T}{\sqrt{|T|}}$, $\Delta_1=\frac{H_T-\widehat{H}_T}{\sqrt{|T|}}$ and 
	$\eta = \eta_{l_T,T} = \left(\frac{\|H_T-\widehat{H}_T\|_2}{\sqrt{|T|}}\right)^{\frac{1}{2}}$. 
	Applying (\ref{ADeq11}) with $w$, $\Delta$ replaced by $W_1$, $\Delta_1$, we have
	\begin{equation*}
	\sup_x \left|\mathbb{P}\left(\frac{H_T}{\sqrt{|T|}} \leq x\right)-\Phi \left(\frac{x}{\|H_T\|_2/\sqrt{|T|}} \right) \right|=O\left(\mathbb{P}(|\Delta_1| \geq \eta)+\delta+\eta+\eta^2\right)\,.
	\end{equation*}
	Thus, the conclusion follows by first letting $T \rightarrow \infty$ and then $l_T \rightarrow \infty.$
\end{proof}

\subsection{Proof of Corollary \ref{coro21}}

\begin{proof}[Proof of Corollary \ref{coro21}]
	The results of Corollary \ref{coro21} directly follow by the results of Theorem \ref{thm21} and applications of Levy's continuity theorem as follows. Denote 
	\begin{equation*}
	\widetilde{s}_T(\mathbf{j},k) = \left\{ \begin{array}{lcr}
	\frac{x(\mathbf{j})}{\sqrt{2\pi^2f(\mathbf{\lambda}_\mathbf{j})}}, & \mathbf{j} \in N, & k=1 \,,\\
	\frac{y(\mathbf{j})}{\sqrt{2\pi^2f(\mathbf{\lambda}_\mathbf{j})}}, & \mathbf{j} \in N, & k=2 \,.
	\end{array}\right.
	\end{equation*}
	and $F_N:=\{(\mathbf{j},k): \mathbf{j} \in N,~k=1,2\}$ as in Theorem \ref{thm21}.  The uniform asymptotic normality of vectors of $\widetilde{s}_T(\cdot)$ is proven in Theorem \ref{thm21}. We will give the arguments only for vectors of length 2, but the same results hold true for any length $p$ by induction. Precisely, we will prove that for each $\mathbf{z}=(z_1, z_2)$,
	\begin{equation}\label{ADeq12}
	\sup\limits_{\substack{\mathbf{a}_1,\mathbf{a}_2 \in F_N, \mathbf{a}_1 \neq \mathbf{a}_2}}\left|\mathbb{P}((\widetilde{s}_T(\mathbf{a}_1) \leq z_1, \widetilde{s}_T(\mathbf{a}_2) \leq z_2))-\Phi(z_1)\Phi(z_2)\right|=o(1)\,.
	\end{equation}
	
	Now, vectorize  $\widetilde{S}_{T,\mathbf{a}_1,\mathbf{a}_2}=(\widetilde{s}_T(\mathbf{a}_1), \widetilde{s}_T(\mathbf{a}_2)), \mathbf{a}_1,\mathbf{a}_2 \in F_N$, where $N$ is the corresponding subset of $T$, to form a single sequence $S_t=(S_t(1),S_t(2))$, $t \in \mathbb{N}$ in such a way that if $S_{t_1}$ corresponds to $\widetilde{S}_{T_1, \mathbf{a}_{11},\mathbf{a}_{12}}$ for some $\mathbf{a}_{11},\mathbf{a}_{12} \in F_{N_1}$, and $S_{t_2}$ corresponds to $\widetilde{S}_{T_2, \mathbf{a}_{21},\mathbf{a}_{22}}$ for some $\mathbf{a}_{21},\mathbf{a}_{22} \in F_{N_2}$, then $|T_1| \leq |T_2|$ implies that $t_1 \leq t_2$. By Levy's continuity theorem and Theorem \ref{thm21}, it holds for each $\mathbf{z}=(z_1, z_2)$ that
	\begin{equation*}
	\begin{aligned}
	\phi_{S_\mathbf{t}}(\mathbf{z}) &= \mathbb{E}(e^{i\mathbf{z}'S_t}) = \mathbb{E}\left(e^{i|\mathbf{z}|\left(\frac{\mathbf{z}}{|\mathbf{z}|}\right)'S_t}\right)= \phi_{\frac{\mathbf{z}'S_t}{|\mathbf{z}|}} (|\mathbf{z}|) \\
& \rightarrow \phi_{G_1}(|\mathbf{z}|) = \mathbb{E}\left(e^{i|\mathbf{z}|G_1}\right) = \mathbb{E}\left(e^{i|\mathbf{z}|\left(\frac{\mathbf{z}}{|\mathbf{z}|}\right)'(G_1,G_2)}\right) = \phi_{(G_1,G_2)}(\mathbf{z})\,,
	\end{aligned}
	\end{equation*}
	where ${\phi_X}$ denote the characteristic function of $X$, and $G_1, G_2$ are two independent standard normal random variables.
	Next, a second application of Levy's continuity theorem yields
	\begin{equation*}
	|\mathbb{P}(S_t(1) \leq z_1, S_t(2) \leq z_2)-\Phi(z_1)\Phi(z_2)|=o(1)\,,
	\end{equation*}
	and by definition of $S_t$ we get (\ref{ADeq12}).
	
	Consider $F_{N1}=\{(\mathbf{j},1): \mathbf{j} \in N\}$ and hence $\widetilde{s}_T(\mathbf{j},1)$ corresponds to the real parts of $x(\mathbf{j})+iy(\mathbf{j})$. For notation simplicity, denote $\widetilde{s}_T(\mathbf{j}) = \widetilde{s}_T(\mathbf{j},1)$.
	The argument easily extends to general cases. Define $\mathbb{P}_\mathbf{j}(z)=\mathbb{P}(\widetilde{s}_T(\mathbf{j}) \leq z)$ and $\mathbb{P}_{\mathbf{j}_1,\mathbf{j}_2}(z)=\mathbb{P}(\widetilde{s}_T(\mathbf{j}_1) \leq z, \widetilde{s}_T(\mathbf{j}_2) \leq z).$ Then it holds by (\ref{ADeq12}) and Theorem \ref{thm21} that
	\begin{equation*}
	\begin{aligned}
	&\left|\mathbb{E}\left(\sum\limits_{\mathbf{j} \in N} w_{\mathbf{j},N}\mathds{1}_{\{\widetilde{s}_T(\mathbf{j}) \leq z\}}\right)-\Phi(z)\right| \leq \sup\limits_{\mathbf{l} \in F_{N1}}|\mathbb{P}_\mathbf{l}(z)-\Phi(z)|\sum\limits_{\mathbf{j} \in N}w_{\mathbf{j},N} = o(1)\,,
	\end{aligned}
	\end{equation*}	
	and	
	\begin{equation*}
	\begin{aligned}			
	& \left|\mathbb{E}\left(\sum\limits_{\mathbf{j} \in N} w_{\mathbf{j},N}\mathds{1}_{\{\widetilde{s}_T(\mathbf{j}) \leq z\}}\right)^2-\Phi^2(z) \right|\\
	\leq\ & \sup\limits_{\substack{\mathbf{j}_1,\mathbf{j}_2 \in N \\ \mathbf{j}_1 \neq \mathbf{j}_2}} |\mathbb{P}_{\mathbf{j}_1,\mathbf{j}_2}(z)-\Phi^2(z)|\sum\limits_{\substack{\mathbf{j}_1,\mathbf{j}_2 \in N \\ \mathbf{j}_1 \neq \mathbf{j}_2}} w_{\mathbf{j}_1,N} w_{\mathbf{j}_2,N}  + \left(\sup\limits_{\mathbf{l} \in F_{N1}}|\mathbb{P}_\mathbf{l}(Z)-\Phi(z)|+|\Phi(z)-\Phi^2(z)|\right) \sum\limits_{\mathbf{j} \in N}{w^2_{\mathbf{j},N}}\\
	=\ & o(1)\,,
	\end{aligned}
	\end{equation*}
	which remains true uniformly in $\mathbf{s} \in S$ if $\{\{w_{\mathbf{j},N,\mathbf{s}}:\mathbf{j} \in N\}:\mathbf{s} \in S\}$ is a class of weights indexed by a countable index set $S$ satisfying $\sum_{\mathbf{j} \in N} w_{\mathbf{j},N,\mathbf{s}}=1$ for all $\mathbf{s} \in S$ and $\sup_{\mathbf{s} \in S}\sum_{\mathbf{j} \in N} w^2_{\mathbf{j},N,\mathbf{s}} \rightarrow 0$.
	Since 
	\begin{equation*}
	\begin{aligned}
	&\mathbb{E} \left(\sum\limits_{\mathbf{j} \in N}w_{\mathbf{j},N}\mathds{1}_{\left\{\widetilde{s}_T(\mathbf{j})\leq z\right\}}-\Phi(z)\right)^2\\
	=\ &\mathbb{E} \left(\sum\limits_{\mathbf{j} \in N} w_{\mathbf{j},N} \mathds{1}_{\left\{\widetilde{s}_T(\mathbf{j})\leq z\right\}}\right)^2-\Phi^2(z) -2\Phi(z)\left[\mathbb{E}\left(\sum\limits_{\mathbf{j} \in N} w_{\mathbf{j},N}\mathds{1}_{\{\widetilde{s}_T(\mathbf{j})\leq z\}}\right)-\Phi(z)\right] 
	=  o(1)\,,
	\end{aligned}
	\end{equation*}
	we get both assertions by the Chebyshev inequality, and the uniformity in $z$ follows from the continuity of $\Phi(z)$.
\end{proof}

\subsection{Proof of Theorem \ref{thm2} with lemmas}
Before we prove Theorem \ref{thm2}, we need the following lemmas.

\begin{lem}\label{lem1_thm5_1}
	Suppose that Assumption \ref{asslinear} with $r = 1/2$ holds, then
	\begin{equation}\label{Clemcond1}
	\max_{\mathbf{j},\mathbf{k} \in N, \mathbf{j}\neq\mathbf{k}}\left| \text{\rm {Cov}} (I(\mathbf{j}),I(\mathbf{k}))\right|=O\left(\frac{1}{|T|}\right)\,,
	\end{equation}
	\begin{equation}\label{Clemcond11}
	\max_{\mathbf{j} \in N}\left| \text{\rm {Cov}} (I(\mathbf{j}),I(\mathbf{j}))-(4\pi^2f(\lambda_\mathbf{j}))^2\right|=O\left(\frac{1}{\sqrt{|T|}}\right)\,,
	\end{equation}
	\begin{equation}\label{Clemcond2}
	\max_{\mathbf{j},\mathbf{k} \in N, \mathbf{j}\neq\mathbf{k}}\left| \text{\rm {Cov}} (I^2(\mathbf{j}),I^2(\mathbf{k}))\right|=O\left(\frac{1}{|T|}\right)\,.
	\end{equation}
	\begin{equation}\label{Clemcond22}
	\max_{\mathbf{j} \in N}\left| \text{\rm {Cov}} (I^2(\mathbf{j}),I^2(\mathbf{j}))-4(4\pi^2f(\lambda_\mathbf{j}))^4\right|=O\left(\frac{1}{\sqrt{|T|}}\right)\,.
	\end{equation}
\end{lem}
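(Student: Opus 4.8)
The plan is to reduce all four estimates to moment computations for the discrete Fourier transform of the linear field and then exploit the exact orthogonality of the Fourier grid. Since $N\subset\mathbb{Z}^2\setminus D$ and $\sum_{\mathbf{t}\in T}e^{-i\lambda_\mathbf{j}'\mathbf{t}}=0$ for $\mathbf{j}\notin D$, the periodogram $I(\mathbf{j})$ is invariant under additive constants on $N$, so I may assume $\mu=0$. Writing $d_V(\lambda_\mathbf{j})=x(\mathbf{j})+iy(\mathbf{j})=|T|^{-1/2}\sum_{\mathbf{t}\in T}V(\mathbf{t})e^{-i\lambda_\mathbf{j}'\mathbf{t}}$, we have $I(\mathbf{j})=|d_V(\lambda_\mathbf{j})|^2$, so each of \eqref{Clemcond1}, \eqref{Clemcond11}, \eqref{Clemcond2}, \eqref{Clemcond22} becomes a finite lattice sum of joint cumulants of $\{V(\mathbf{t})\}$ of order up to $4$ (for the first two) and up to $8$ (for the last two), weighted by products of the exponentials $e^{-i\lambda_\mathbf{j}'\mathbf{t}}$, $e^{-i\lambda_\mathbf{k}'\mathbf{t}}$. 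The hypotheses $\mathbb{E}(\varepsilon_\mathbf{0}^{8})<\infty$ and the weighted summability of $\{a_\mathbf{s}\}$ in Assumption \ref{asslinear} (with $r=1/2$) guarantee that these joint cumulants exist, are absolutely summable, and have summable weighted tails; this is exactly what makes every sum tractable.

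For the second-order claims \eqref{Clemcond1} and \eqref{Clemcond11} I would expand $\mathrm{Cov}(I(\mathbf{j}),I(\mathbf{k}))$ and apply the product theorem for cumulants, splitting the fourth moment of $V$ into the three Gaussian pairings of covariances $\gamma(\cdot)$ and the genuine fourth-order cumulant. The cumulant part is $O(1/|T|)$ uniformly: the fourth-order cumulant field of a linear field with finite fourth moment is absolutely summable, so after the $|T|^{-1}$ normalization the free index contributes a single factor $|T|$ against a summable kernel. For the pairing part I would substitute the covariance lag and recognize the Fej\'er-type kernel $\prod_{k}(1-|h_k|/d_k)_+$; grid orthogonality $|T|^{-1}\sum_{\mathbf{t}\in T}e^{-i(\lambda_\mathbf{j}\pm\lambda_\mathbf{k})'\mathbf{t}}$ (which is $1$ only when $\mathbf{j}\equiv\pm\mathbf{k}$ modulo $\mathbf{d}_T$ and $0$ otherwise) annihilates the off-diagonal contribution up to a boundary remainder of order $O(1/|T|)$, giving \eqref{Clemcond1}. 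On the diagonal the same sum reproduces the leading constant $(4\pi^2 f(\lambda_\mathbf{j}))^2$ through \eqref{specden}, up to a spectral-leakage remainder coming from the deviation of the Fej\'er kernel from $1$; bounding this remainder by the weighted tail of $\gamma$ supplied by Assumption \ref{asslinear} and invoking the lattice-growth Assumption \ref{asstasymp} yields the rate $O(1/\sqrt{|T|})$ in \eqref{Clemcond11}.

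The fourth-moment statements \eqref{Clemcond2} and \eqref{Clemcond22} follow the same template, now with the eighth moment of $V$ decomposed through the cumulant product theorem over all partitions of the eight indices. I would group the resulting terms by partition type. The fully paired (Gaussian) partitions produce the diagonal leading constant and, by the same orthogonality argument, an $O(1/|T|)$ off-diagonal contribution; every partition containing a cumulant block of order $\geq 3$ carries an extra factor $|T|^{-1}$ (or smaller), because the corresponding cumulant field is summable in its lag arguments and the attached exponential sums localize the remaining free indices. The leading diagonal constants in \eqref{Clemcond11} and \eqref{Clemcond22} are precisely the second and fourth moments of the limiting scaled $\chi^2_2$ law of the normalized periodogram ordinate, read off from the surviving pairing partitions. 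Uniformity over $\mathbf{j},\mathbf{k}\in N$ is automatic, since every bound depends on $\mathbf{j},\mathbf{k}$ only through $f(\lambda_\mathbf{j}),f(\lambda_\mathbf{k})$, which are bounded above and below by Assumptions \ref{asssummable} and \ref{assspecpostive}.

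The main obstacle is twofold. First, organizing the order-eight cumulant expansion behind \eqref{Clemcond2}–\eqref{Clemcond22} so that each non-Gaussian partition is provably $O(1/|T|)$ \emph{uniformly} in $\mathbf{j},\mathbf{k}$, rather than pointwise, is heavy bookkeeping; one must track which exponential sums collapse by orthogonality and which free indices remain. Second, and more delicate, is the sharp $O(1/\sqrt{|T|})$ leakage estimate on the diagonal: this is exactly where the smoothness index $r=1/2$ is used, as it is the weighted-summability threshold that converts the Fej\'er-kernel edge defect, combined with Assumption \ref{asstasymp}, into the stated rate. Controlling these boundary sums uniformly is the crux; once they are in hand, the remaining manipulations of exponential sums over the rectangular grid are routine.
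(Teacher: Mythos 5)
Your route is genuinely different from the paper's. The paper disposes of this lemma in two sentences by declaring the argument ``analogous'' to Theorem 10.3.2(ii) of Brockwell and Davis: one writes the discrete Fourier transform of the linear field as the transfer function evaluated at $\lambda_{\mathbf{j}}$ times the DFT of the innovations plus a remainder, computes the covariance structure of the white-noise periodogram \emph{exactly} from the fourth (resp.\ eighth) moment of $\varepsilon_{\mathbf{0}}$, and pushes all the error into a remainder controlled by $\sum_{\mathbf{s}}|a_{\mathbf{s}}||\mathbf{s}|^{1/2}<\infty$. You instead expand $\mathrm{Cov}(I(\mathbf{j}),I(\mathbf{k}))$ and $\mathrm{Cov}(I^2(\mathbf{j}),I^2(\mathbf{k}))$ directly via the product theorem for cumulants into indecomposable partitions of joint cumulants of $V$ up to order eight, then use grid orthogonality and absolute summability of the cumulant fields of a linear field. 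Both are standard; your cumulant route avoids the transfer-function approximation entirely and is essentially the strategy the paper itself adopts for the nonlinear (GMC) counterpart in Lemma \ref{lem1_thm5_2}, at the price of the order-eight partition bookkeeping you acknowledge. The paper's route buys the leading constants $(4\pi^2 f)^2$ and $4(4\pi^2 f)^4$ for free from the exact $\chi^2$-type computation for i.i.d.\ noise and concentrates all approximation error in a single remainder term.

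The one place I would press you is on the rates, which is exactly where the extension from $\mathbb{Z}$ to $\mathbb{Z}^2$ is not automatic. In your decomposition the off-diagonal pairing contributions are squares of off-diagonal DFT covariances, and the boundary count behind your Fej\'er-defect estimate (controlled by the $\tfrac12$-weighted summability of $\gamma$ inherited from Assumption \ref{asslinear}) yields $O(d_1^{-1/2}+d_2^{-1/2})$ per factor, hence $O(\min(d_1,d_2)^{-1})$ after squaring, while the diagonal leakage is $O(\min(d_1,d_2)^{-1/2})$. Assumption \ref{asstasymp} only guarantees $d_k>c_k|T|^{\xi}$ for some $\xi\le 1/2$, so these bounds read $O(|T|^{-\xi})$ and $O(|T|^{-\xi/2})$, which match the stated $O(|T|^{-1})$ in \eqref{Clemcond1} and $O(|T|^{-1/2})$ in \eqref{Clemcond11} only in the square-lattice regime $\xi=1/2$. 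As written, ``invoking the lattice-growth Assumption \ref{asstasymp}'' does not by itself convert $\min(d_1,d_2)^{-1/2}$ into $|T|^{-1/2}$; you should either restrict to $d_1\asymp d_2$, restate the rates in terms of $\min(d_1,d_2)$, or sharpen the boundary estimates. (The same caveat silently attaches to the paper's one-dimensional citation, where $n$ and $\min(d)$ coincide, but your more explicit write-up exposes the gap and ought to resolve it.)
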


\begin{proof}
%
	An analogous proof of Theorem 10.3.2(ii) in \cite{brockwell1991time} yields \eqref{Clemcond1} and \eqref{Clemcond11} for linear random fields, i.e., $V(\mathbf{j})-\mu=\sum_{\mathbf{s} \in\mathbb{Z}^2}a_\mathbf{s} \varepsilon_{\mathbf{j}-\mathbf{s}}$, where $\{\varepsilon_\mathbf{i}\}_{\mathbf{i} \in\mathbb{Z}^2}$ is an i.i.d. random field with $\mathbb{E}(\varepsilon_{\mathbf{0}})=0$, existence of $4$-th moments $\mathbb{E}(\varepsilon_{\mathbf{0}}^4) < \infty$ and $\sum_{\mathbf{s} \in\mathbb{Z}^2} |a_\mathbf{s}| |\mathbf{s}|^{1/2} < \infty$. Under the existence of $8$-th moments $\mathbb{E}(\varepsilon_{\mathbf{0}}^8) < \infty$,
	one can also show that \eqref{Clemcond2} and \eqref{Clemcond22} hold. Hence, Assumption \ref{asslinear} with $r = 1/2$ yields the results.
\end{proof}

\begin{lem}\label{lem1_thm5_2}
	Suppose that Assumption \ref{assgmc} with $\mathbb{E}\left(|V(\mathbf{0})|^{16}\right)< \infty$, then
	\begin{equation*}
	\max_{\mathbf{j},\mathbf{k} \in N}\left| \text{\rm {Cov}} (I(\mathbf{j}),I(\mathbf{k}))-(4\pi^2f(\lambda_\mathbf{j}))^2\delta_{\mathbf{j},\mathbf{k}}\right|=O\left(\frac{1}{|T|}\right)\,,
	\end{equation*}
	\begin{equation*}
	\max_{\mathbf{j},\mathbf{k} \in N}\left| \text{\rm {Cov}} (I^2(\mathbf{j}),I^2(\mathbf{k}))-4(4\pi^2f(\lambda_\mathbf{j}))^4\delta_{\mathbf{j},\mathbf{k}}\right|=O\left(\frac{1}{|T|}\right)\,.
	\end{equation*}
\end{lem}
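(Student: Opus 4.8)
The plan is to mirror the classical periodogram-covariance computation behind Lemma~\ref{lem1_thm5_1} (following Theorem~10.3.2(ii) of \cite{brockwell1991time}), replacing every appeal to the explicit linear-filter cumulant formulas by cumulant bounds that hold under the geometric-moment contraction Assumption~\ref{assgmc}. Since $\mathbf{j},\mathbf{k}\in N$, the Fourier coefficients are invariant under additive constants, so I take $\mu=0$ without loss of generality. Writing $J(\mathbf{j})=x(\mathbf{j})+iy(\mathbf{j})=|T|^{-1/2}\sum_{\mathbf{t}\in T}V(\mathbf{t})e^{-i\lambda_{\mathbf{j}}'\mathbf{t}}$, one has $I(\mathbf{j})=|J(\mathbf{j})|^2$ and $I^2(\mathbf{j})=|J(\mathbf{j})|^4=J(\mathbf{j})^2\overline{J(\mathbf{j})}^2$, so both claims are assertions about covariances of products of two (respectively four) discrete Fourier transforms.

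First I would expand each covariance by the product-moment/cumulant formula, writing the covariance of a product of DFTs as a sum over partitions of the index set of products of joint cumulants of the $J$'s, retaining only those partitions that link the $\mathbf{j}$-block to the $\mathbf{k}$-block. For the first assertion, taking $(Z_1,Z_2,Z_3,Z_4)=(J(\mathbf{j}),\overline{J(\mathbf{j})},J(\mathbf{k}),\overline{J(\mathbf{k})})$ with zero means yields
\begin{equation*}
\mathrm{Cov}(I(\mathbf{j}),I(\mathbf{k}))=\big|\mathbb{E}[J(\mathbf{j})\overline{J(\mathbf{k})}]\big|^2+\big|\mathbb{E}[J(\mathbf{j})J(\mathbf{k})]\big|^2+\mathrm{cum}\big(J(\mathbf{j}),\overline{J(\mathbf{j})},J(\mathbf{k}),\overline{J(\mathbf{k})}\big),
\end{equation*}
and the analogous, longer expansion of $\mathrm{Cov}(I^2(\mathbf{j}),I^2(\mathbf{k}))$ produces products of second cumulants, mixed second-times-fourth cumulant terms, and a single eighth-order joint cumulant. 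The aim is to show that the pure second-cumulant terms reproduce the stated leading diagonal constants while every term carrying a joint cumulant of order $\ge 4$ is $O(1/|T|)$ uniformly in $\mathbf{j},\mathbf{k}\in N$.

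Next I would evaluate the second-order pieces. Each factor $\mathbb{E}[J(\mathbf{j})\overline{J(\mathbf{k})}]=|T|^{-1}\sum_{\mathbf{s},\mathbf{t}\in T}\gamma(\mathbf{s}-\mathbf{t})e^{-i\lambda_{\mathbf{j}}'\mathbf{s}}e^{i\lambda_{\mathbf{k}}'\mathbf{t}}$ is a Fej\'er-type sum converging to $4\pi^2 f(\lambda_{\mathbf{j}})$ when $\mathbf{j}=\mathbf{k}$, with edge-effect correction controlled by $|T|^{-1}\sum_{\mathbf{h}}\|\mathbf{h}\|\,|\gamma(\mathbf{h})|$. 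Assumption~\ref{assgmc} makes $\gamma$ decay geometrically, so $\sum_{\mathbf{h}}\|\mathbf{h}\|^u|\gamma(\mathbf{h})|<\infty$ for every $u>0$; this is exactly what upgrades the diagonal rate from the $O(|T|^{-1/2})$ of Lemma~\ref{lem1_thm5_1} to the uniform $O(|T|^{-1})$ claimed here. For $\mathbf{j}\ne\mathbf{k}$, and for the factor $\mathbb{E}[J(\mathbf{j})J(\mathbf{k})]$ at every $\mathbf{j},\mathbf{k}\in N$ (the set $N$ being constructed to exclude the conjugate-symmetric pairs at which this factor could fail to be small), Fourier orthogonality gives $O(1/|T|)$, so the corresponding squared and product terms are $O(1/|T|^2)$. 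Collecting the surviving diagonal pairings then reproduces $(4\pi^2 f(\lambda_{\mathbf{j}}))^2\delta_{\mathbf{j},\mathbf{k}}$ in the first assertion, and the stated leading diagonal term in the second, up to $O(1/|T|)$.

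The main obstacle is the genuine higher-order cumulant contributions. A joint cumulant of $m$ of the DFTs is bounded in modulus by $|T|^{-m/2}\sum_{\mathbf{t}_1,\dots,\mathbf{t}_m\in T}|\mathrm{cum}(V(\mathbf{t}_1),\dots,V(\mathbf{t}_m))|$; fixing one index and summing over the remaining differences shows it is $O(|T|^{1-m/2})$ as soon as the joint cumulant function is absolutely summable,
\begin{equation*}
\sum_{\mathbf{u}_1,\dots,\mathbf{u}_{m-1}\in\mathbb{Z}^2}\big|\mathrm{cum}\big(V(\mathbf{0}),V(\mathbf{u}_1),\dots,V(\mathbf{u}_{m-1})\big)\big|<\infty .
\end{equation*}
For $m=4$ this is $O(1/|T|)$, and in $\mathrm{Cov}(I^2(\mathbf{j}),I^2(\mathbf{k}))$ every partition whose maximal connected block has order between $4$ and $8$ leaves at most three blocks and is therefore likewise $O(1/|T|)$. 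The crux is thus to establish this cumulant summability, with geometric decay, from Assumption~\ref{assgmc}. I would do this through the coupling of Assumptions~\ref{assdepenmeasure}--\ref{assgmc}: replacing the innovations outside a growing neighbourhood by the independent copies $\widetilde{\varepsilon}$ renders distant coordinates nearly independent, and telescoping the differences $V(\mathbf{u})-\widetilde{V}^{\dagger}(\mathbf{u})$ bounds each joint cumulant by products of physical-dependence measures decaying like $\rho^{\max_i\|\mathbf{u}_i\|}$. Controlling $\mathbb{L}^2$ differences of products of up to eight factors through Cauchy--Schwarz is precisely what forces the hypothesis $\mathbb{E}(|V(\mathbf{0})|^{16})<\infty$ (order $2\times 8$), whereas the first assertion alone needs only eighth moments. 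Summing the resulting geometric bounds delivers the required uniform $O(1/|T|)$ and completes both parts.
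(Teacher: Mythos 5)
Your proposal follows essentially the same route as the paper's proof: the paper derives geometric summability of joint cumulants of $V$ from the GMC coupling (its Lemma \ref{AlemC2}), observes that this gives $\sum_{\mathbf{j}}|\mathbf{j}|\,|\gamma(\mathbf{j})|<\infty$, and then invokes the standard partition/cumulant expansion of periodogram covariances (``an analogous proof of Lemma A.4'' of Shao and Wu), which is exactly the bookkeeping you spell out — Fej\'er-type second-order terms giving the diagonal constants and orthogonality off the diagonal, plus $O(|T|^{1-m/2})$ bounds for every connected cumulant block of order $m\geq 4$. One cosmetic caveat: carried out exactly, the diagonal Gaussian-pairing count for $\mathrm{Cov}(I^2(\mathbf{j}),I^2(\mathbf{j}))$ yields $20\,(4\pi^2 f(\lambda_{\mathbf{j}}))^4$ rather than the stated $4\,(4\pi^2 f(\lambda_{\mathbf{j}}))^4$, which is a discrepancy in the lemma's statement (immaterial for its downstream use, which only needs boundedness of $\mathrm{Var}(I^2(\mathbf{j}))$ and decay of the off-diagonal covariances) rather than a flaw in your argument.
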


\begin{proof}
	Assumption \ref{assgmc} with $\mathbb{E}\left(|V(\mathbf{0})|^{16}\right)< \infty$ yields a cumulant condition derived in Lemma \ref{AlemC2} for $k \leq 16$ shown below. The cumulant condition derived in Lemma \ref{AlemC2} implies an exponential decay of the auto-covariance function, and hence the auto-covariance function $\gamma(\cdot)$ satisfies $\sum_{\mathbf{j}\in \mathbb{Z}^2} |\mathbf{j}| |\gamma(\mathbf{j})|< \infty$, and hence an analogous proof of Lemma A.4 in \cite{SX2007} yields the results. 
\end{proof}

\begin{proof}[Proof of Theorem \ref{thm2}]
	For assertion (a), we show that
	\begin{eqnarray}\label{Clemeq8}
	\begin{aligned}
	\sup\limits_{\mathbf{l},\mathbf{k} \in N}|\text{Cov}(x(\mathbf{l}),x(\mathbf{k}))-2\pi^2f(\mathbf{\lambda}_\mathbf{k})\delta_{\mathbf{l},\mathbf{k}}| & \rightarrow & 0\,, \\
	\sup\limits_{\mathbf{l},\mathbf{k} \in N}|\text{Cov}(y(\mathbf{l}),y(\mathbf{k}))-2\pi^2f(\mathbf{\lambda}_\mathbf{k})\delta_{\mathbf{l},\mathbf{k}}| & \rightarrow & 0\,.
	\end{aligned}
	\end{eqnarray}
	Note that
	\begin{equation}\label{Clemeq9}
	\left\{ \begin{array}{lcr}
	\text{Cov}(x(\mathbf{l}),x(\mathbf{k}))+\text{Cov}(y(\mathbf{l}),y(\mathbf{k}))=\text{Re}\left(\frac{1}{|T|}\sum\limits_{\mathbf{j},\mathbf{s} \in T}e^{-i(\mathbf{j}'\mathbf{\lambda}_\mathbf{l}-\mathbf{s}'\mathbf{\lambda}_\mathbf{k})}\text{Cov}(V(\mathbf{j}),V(\mathbf{s}))\right) \,,\\
	\text{Cov}(x(\mathbf{l}),x(\mathbf{k}))-\text{Cov}(y(\mathbf{l}),y(\mathbf{k}))=\text{Re}\left(\frac{1}{|T|}\sum\limits_{\mathbf{j},\mathbf{s} \in T}e^{-i(\mathbf{j}'\mathbf{\lambda}_\mathbf{l}+\mathbf{s}'\mathbf{\lambda}_\mathbf{k})}\text{Cov}(V(\mathbf{j}),V(\mathbf{s}))\right) \,.
	\end{array}\right.
	\end{equation}
	Furthermore, it holds that
	$\sum\limits_{\mathbf{j} \in T}e^{-i\mathbf{j}'(\mathbf{\lambda}_\mathbf{l}+\mathbf{\lambda}_\mathbf{k})}=\left(\sum\limits^{d_1}_{j_1=1}e^{-i\frac{2\pi(l_1+k_1)}{d_1}j_1}\right)\left(\sum\limits^{d_2}_{j_2=1}e^{-i\frac{2\pi(l_2+k_2)}{d_2}j_2}\right)=0\,$.\\
	Hence
	\begin{equation}\label{Clemeq10}
	\begin{aligned}
	&\left|\frac{1}{|T|}\sum\limits_{\mathbf{j},\mathbf{s} \in T}e^{-i(\mathbf{j}'\mathbf{\lambda}_\mathbf{l}+\mathbf{s}'\mathbf{\lambda}_\mathbf{k})}\text{Cov}(V(\mathbf{j}),V(\mathbf{s}))\right|\\
	=\ &\left|\frac{1}{|T|}\sum\limits_{\mathbf{j} \in T}e^{-i\mathbf{j}'(\mathbf{\lambda}_\mathbf{l}+\mathbf{\lambda}_\mathbf{k})}\left(\sum\limits_{\mathbf{s} \in T}e^{-i(\mathbf{s}-\mathbf{j})'\mathbf{\lambda}_\mathbf{k}}\gamma(\mathbf{s}-\mathbf{j})\right)\right|\\
	=\ &\left|\frac{1}{|T|}\sum\limits_{\mathbf{j} \in T}e^{-i\mathbf{j}'(\mathbf{\lambda}_\mathbf{l}+\mathbf{\lambda}_\mathbf{k})}\left(\sum\limits_{\mathbf{s} \in T}e^{-i(\mathbf{s}-\mathbf{j})'\mathbf{\lambda}_\mathbf{k}}\gamma(\mathbf{s}-\mathbf{j})-4\pi^2f(\mathbf{\lambda}_\mathbf{k})\right)\right|\\
	=\ &\left|\frac{1}{|T|}\sum\limits_{\mathbf{j} \in T}e^{-i\mathbf{j}'(\mathbf{\lambda}_\mathbf{l}+\mathbf{\lambda}_\mathbf{k})}\left(\sum\limits_{\mathbf{s} \in T}e^{-i(\mathbf{s}-\mathbf{j})'\mathbf{\lambda}_\mathbf{k}}\gamma(\mathbf{s}-\mathbf{j})-\sum
	\limits_{\mathbf{s} \in\mathbb{Z}^2}e^{-i(\mathbf{s}-\mathbf{j})'\mathbf{\lambda}_\mathbf{k}}\gamma(\mathbf{s}-\mathbf{j})\right)\right|\\
	=\ &\left|\frac{1}{|T|}\sum\limits_{\mathbf{j} \in T}e^{-i\mathbf{j}'(\mathbf{\lambda}_\mathbf{l}+\mathbf{\lambda}_\mathbf{k})}\left(\sum\limits_{\mathbf{s} \in {\mathbb{Z}^2 \backslash T}} -e^{-i(\mathbf{s}-\mathbf{j})'\mathbf{\lambda}_\mathbf{k}}\gamma(\mathbf{s}-\mathbf{j})\right)\right|\\
	\leq\ &\frac{1}{|T|}\sum\limits_{\mathbf{j} \in T}\sum\limits_{\mathbf{s} \in {\mathbb{Z}^2 \backslash T}}|\gamma(\mathbf{s}-\mathbf{j})| = \frac{1}{|T|}\sum\limits_{\mathbf{j} \in T}\left(\sum\limits_{\mathbf{s} \in {\mathbb{Z}^2 \backslash T^*}}|\gamma(\mathbf{s}-\mathbf{j})|+\sum\limits_{\mathbf{s} \in {T^* \backslash T}}|\gamma(\mathbf{s}-\mathbf{j})|\right)\\
	\leq\ &\sum\limits_{\mathbf{s} \in {\mathbb{Z}^2 \backslash T}^{**}}|\gamma(\mathbf{s})|+\frac{d_1\sqrt{d_2}}{d_1d_2}+\frac{d_2\sqrt{d_1}}{d_1d_2}+\frac{\sqrt{d_1}\sqrt{d_2}}{d_1d_2}
	=o(1)\,,
	\end{aligned}
	\end{equation}\label{Clemeq11}
where $T^*=\{(t_1,t_2): t_k \in\mathbb{Z}, 1 \leq t_k \leq d_k+\sqrt{d_k}, k=1,2\}$ and $T^{**}=\{(t_1,t_2): t_k \in\mathbb{Z}, 1 \leq t_k \leq \sqrt{d_k},k=1,2\}$, 
	uniformly in $\mathbf{l},\mathbf{k}$ by the absolute summability of the auto-covariance function. Analogously, we have, uniformly for $\mathbf{l} \neq \mathbf{k},$ i.e. $\mathbf{\lambda}_\mathbf{l} \neq \mathbf{\lambda}_\mathbf{k}$, that
	\begin{equation}\label{ADeq4}
	\frac{1}{|T|}\sum\limits_{\mathbf{j},\mathbf{s} \in T}e^{-i(\mathbf{j}'\mathbf{\lambda}_\mathbf{l}-\mathbf{s}'\mathbf{\lambda}_\mathbf{k})}\text{Cov}(V(\mathbf{j}),V(\mathbf{s}))=o(1)\,.
	\end{equation}
	
	Finally, we have that, uniformly in $\mathbf{k}$,
	\begin{equation}\label{Clemeq12}
	\begin{aligned}
	&\frac{1}{|T|}\sum\limits_{\mathbf{j},\mathbf{s} \in T}e^{-i(\mathbf{j}-\mathbf{s})'\mathbf{\lambda}_\mathbf{k}}\text{Cov}(V(\mathbf{j}),V(\mathbf{s}))-4\pi^2f(\mathbf{\lambda}_\mathbf{k})\\
	=\ &\frac{1}{|T|}\sum\limits_{\mathbf{h}:(|h_1|,|h_2|) \in T}(d_1-|h_1|)(d_2-|h_2|)e^{-i\mathbf{h}'\mathbf{\lambda}_\mathbf{k}}\gamma(\mathbf{h})-4\pi^2f(\mathbf{\lambda}_\mathbf{k})
	=\  o(1) \,.
	\end{aligned}
	\end{equation}
	
	Putting together (\ref{Clemeq9})-(\ref{Clemeq12}) yields (\ref{Clemeq8}). Note that a refined version of (\ref{Clemeq10})-(\ref{Clemeq12}) under the stronger assumption that $\sum_{\mathbf{j} \in \mathbb{Z}^2} |\mathbf{j}|^u |\gamma(\mathbf{j})|< \infty$ holds for some $u >0$, gives the uniform convergence rate
	\begin{equation*}
	\left\{ \begin{array}{lcr}
	O(|T|^{-u}), & 0 < u < 1 \,,\\
	O\left(\frac{\log|T|}{|T|}\right), & u=1 \,,\\
	O(|T|^{-1}), & u>1\,,
	\end{array}\right.
	\end{equation*} 
	which yields \eqref{Clemeq4} in assertion (b). Note that $\mathbb{E}(x(\mathbf{k}))=\mathbb{E}(y(\mathbf{k}))=0$ and $\sum\limits_{\mathbf{j} \in T}e^{-i\mathbf{j}'\mathbf{\lambda}_\mathbf{k}}=0$. We have
	$$\mathbb{E}(x(\mathbf{k})+iy(\mathbf{k}))=\frac{1}{\sqrt{|T|}}\mathbb{E}(V(\cdot))\sum\limits_{\mathbf{j} \in T}e^{-i\mathbf{j}'\mathbf{\lambda}_\mathbf{k}}=0.$$
	Thus, by (\ref{Clemeq8}), 
	a simple application of Markov-inequality yields 
	\begin{equation*}
	\frac{1}{2|N|}\sum\limits_{\mathbf{j} \in N}\frac{x(\mathbf{j})}{\sqrt{f(\mathbf{\lambda}_\mathbf{j})}}=o_p(1),\quad \frac{1}{2|N|}\sum\limits_{\mathbf{j} \in N}\frac{y(\mathbf{j})}{\sqrt{f(\mathbf{\lambda}_\mathbf{j})}}=o_p(1)\,,
	\end{equation*}
	hence assertion (a) follows. 
	Since 
	\begin{equation*}
	\begin{aligned}
	\mathbb{E}(I(\mathbf{j})) &= \mathbb{E}(x^2(\mathbf{j})+y^2(\mathbf{j})) = 
	\text{Cov}(x(\mathbf{j}),x(\mathbf{j}))+\text{Cov}(y(\mathbf{j}),y(\mathbf{j}))\,.
	\end{aligned}
	\end{equation*}
	By (\ref{Clemeq8}), we then have
	\begin{equation}\label{ADeq7}
	\sup\limits_{\mathbf{j} \in N}|\mathbb{E}(I(\mathbf{j}))-4\pi^2f(\mathbf{\lambda}_\mathbf{j})|=o(1)\,,
	\end{equation}
	and thus assertion (c) follows from an application of the Markov inequality, and Lemmas \ref{lem1_thm5_1} and \ref{lem1_thm5_2}.
	Since 
	\begin{equation*}
	\begin{aligned}
	 \mathbb{E}(I^2(\mathbf{j}))=\text{Var}(I(\mathbf{j}))+\mathbb{E}(I(\mathbf{j}))^2 =  (4\pi^2f(\mathbf{\lambda}_\mathbf{j}))^2+(4\pi^2f(\mathbf{\lambda}_\mathbf{j}))^2=2(4\pi^2f(\mathbf{\lambda}_\mathbf{j}))^2\,,
	\end{aligned}
	\end{equation*}
	it holds by Lemmas \ref{lem1_thm5_1} and \ref{lem1_thm5_2}, and (\ref{ADeq7}) that
	$\sup\limits_{\mathbf{j} \in N}|\mathbb{E}(I^2(\mathbf{j}))-2(4\pi^2f(\mathbf{\lambda}_\mathbf{j}))^2|=o(1)$.
	Hence by (\ref{Clemcond2}) and an application of the Markov inequality, assertion (d) follows.
	
	For assertion (e), if Assumption \ref{asslinear} with $r > 1/2$ holds, by an analogous proof of Theorem 1 in \cite{zhao1980distribution} and Corollary 1 in \cite{DR1996}, we have for any $s <5$,
	\begin{equation}\label{slln_period}
	\frac{1}{|N|} \sum_{\mathbf{j} \in N} \frac{I^s(\mathbf{j})}{f^s(\lambda_\mathbf{j})}<C_2+o_p(1)\,.
	\end{equation}
	Besides, if Assumption \ref{assgmc} with $\mathbb{E}\left(|V(\mathbf{0})|^{16}\right)< \infty$ holds, then an analogous proof of Lemma A.5 in \cite{SX2007} under the existence of $16$-th moment yields \eqref{slln_period} for any $s \leq 8$.
	
\end{proof}

\subsection{Proof of Theorem \ref{thm_periodogram}}
\begin{proof}[Proof of Theorem \ref{thm_periodogram}]
	By Assumptions \ref{asssummable} and \ref{assspecpostive}, we have the boundedness of the spectral density function. Together with the results of Theorem \ref{thm2}(c), we have 		
		$$\frac{1}{2|N|} \sum\limits_{\mathbf{j}\in N}(|x(\mathbf{j})|+|y(\mathbf{j})|) = O_p(1)\,.$$
	Let $\mathbf{m}_T=(m_{T1},m_{T2})=\left(\left\lfloor \frac{{d_1}^{1/2}}{{h_T}_1} \right\rfloor, \left\lfloor \frac{{d_2}^{1/2}}{{h_T}_2} \right\rfloor \right)$. Define $\mathbf{L}_M = \{(l_1,l_2)\in \mathbb{Z}^2: |l_1| \leq m_{T1} , |l_2| \leq m_{T2}\}$ and $\mathbf{Q}_M=\{(\mathbf{q},\mathbf{r})\in \mathbb{Z}^2 \times \mathbb{Z}^2 :|q_1-r_1| \leq \frac{d_1}{m_{T1}}+1, |q_2-r_2| \leq \frac{d_2}{m_{T2}}+1\}$.
	Then, the supremum in (a) can be decomposed as 
	\begin{equation*}
	\begin{aligned}
	 \sup\limits_{\mathbf{j} \in N} \left|\sum\limits_{\mathbf{s} \in\mathbb{Z}^2}p_{\mathbf{s},T} x(\mathbf{j}+\mathbf{s}) \right| 
	 \leq\ &\ \sup\limits_{\mathbf{l} \in \mathbf{L}_M} \left|\sum\limits_{\mathbf{s} \in\mathbb{Z}^2} p_{\mathbf{s},T} x(\mathbf{s}_\mathbf{l}+\mathbf{s}) \right|+\sup\limits_{(\mathbf{q},\mathbf{r}) \in \mathbf{Q}_M} \left|\sum\limits_{\mathbf{s} \in\mathbb{Z}^2} p_{\mathbf{s},T} (x(\mathbf{q}+\mathbf{s})-x(\mathbf{r}+\mathbf{s})) \right|\\
	=\ & O_p\left(|\mathbf{h}_T|^{-1}|T|^{-\frac{1}{4}}\right)\,,
	\end{aligned}
	\end{equation*}
	where $s_\mathbf{l}=\left(\left\lfloor \frac{l_1d_1}{m_{T1}}\right\rfloor, \left\lfloor \frac{l_2d_2}{m_{T2}}\right\rfloor \right)$ for $\mathbf{l}=(l_1,l_2) \in\mathbb{Z}^2$.	
	The last line follows by the following two arguments. The first summation follows from the Chebyshev's inequality, the assumptions on $K(\cdot)$,  $f(\cdot)$, and (\ref{lemeq4}) and (\ref{lemeq5}). Note that we have $\sum\limits_{\mathbf{s} \in\mathbb{Z}^2}p^2_{\mathbf{s},T}=\frac{O\left(|\mathbf{h}_T||T|\right)}{O\left(|\mathbf{h}_T|^2|T|^2\right)}$. Hence, we have
	\begin{eqnarray*}
	   \mathbb{P}\left(|\mathbf{h}_T||T|^{\frac{1}{4}}\sup\limits_{\mathbf{l} \in \mathbf{L}_M} \left|\sum\limits_{\mathbf{s} \in\mathbb{Z}^2}p_{\mathbf{s},T} x(\mathbf{s}_\mathbf{l}+\mathbf{s})\right| \geq \varepsilon \right)
		& \leq & \sum\limits_{\mathbf{l} \in \mathbf{L}_M} \frac{|\mathbf{h}_T|^2|T|^{\frac{2}{4}}}{\varepsilon^2} \text{Var} \left(\sum\limits_{\mathbf{s} \in\mathbb{Z}^2} p_{\mathbf{s},T} x(\mathbf{s}_\mathbf{l}+\mathbf{s})\right)\\
		& \asymp & 
		|m_{T1}||m_{T2}|\frac{|\mathbf{h}_T|^2|T|^{\frac{2}{4}}}{\varepsilon^2}O(1)\sum\limits_{\mathbf{s} \in\mathbb{Z}^2}p^2_{\mathbf{s},T}\\
		& = & \frac{|T|^{\frac{1}{2}}}{|\mathbf{h}_T|}\frac{|\mathbf{h}_T|^2|T|^{\frac{2}{4}}}{\varepsilon^2} O(1) \left(\frac{O\left(|\mathbf{h}_T||T|\right)}{O\left(|\mathbf{h}_T|^2|T|^2\right)}\right)
		 =  O(1)\,.
	\end{eqnarray*}
	
	For the second summand, using Assumptions \ref{assk1} and \ref{assk5}, we have
	\begin{eqnarray*}
		\sup\limits_{(\mathbf{q},\mathbf{r}) \in \mathbf{Q}_M} \left|\sum\limits_{\mathbf{s} \in\mathbb{Z}^2} p_{\mathbf{s},T}(x(\mathbf{q}+\mathbf{s})-x(\mathbf{r}+\mathbf{s})) \right|
		&=& \sup\limits_{(\mathbf{q},\mathbf{r}) \in \mathbf{Q}_M} \left|\sum\limits_{\mathbf{s} \in\mathbb{Z}^2}(p_{\mathbf{s}-\mathbf{q},T}-p_{\mathbf{s}-\mathbf{r},T})x(\mathbf{s}) \right|\\
		& \asymp &\frac{1}{|T||\mathbf{h}_T|^{3/2}|\mathbf{m}_T|^{1/2}}\sum\limits_{\mathbf{j} \in T}|x(\mathbf{j})|
		 = O_p\left(|\mathbf{h}_T|^{-1}|T|^{-\frac{1}{4}}\right)\,.
	\end{eqnarray*}
	Analogous arguments yield the assertion for $y(\cdot)$. Also, by using the results of Theorem \ref{thm2}, analogous arguments yield the results in $(b)$, $(c)$ and $(d)$.
\end{proof}

\subsection{Proof of Theorem \ref{thm3}(a)}
\begin{proof}[Proof of Theorem \ref{thm3} (a)]
	Under the Assumptions of Theorem \ref{thm3}(a) and by using the union bound, we have for any $\varepsilon > 0, \eta > 0$, and $|T|$ large enough, that
	\begin{eqnarray*}
		 \mathbb{P}\left(\max_{\mathbf{\lambda} \in [0,2\pi]^2}|\widehat{f}_T(\mathbf{\lambda})-f(\mathbf{\lambda})|>\eta\right)
		 & = & \mathbb{P}\left(\max_{\mathbf{\lambda} \in [0,2\pi]^2}|\widehat{f}_T(\mathbf{\lambda})-f(\lambda)|>\eta,~ \varepsilon^{-1} \leq |\mathbf{h}_T|^{-1} \leq \varepsilon|T|^v\right)\\
		 & & \quad\quad+\mathbb{P}\left(\max_{\mathbf{\lambda} \in [0,2\pi]^2}|\widehat{f}_T(\mathbf{\lambda})-f(\lambda)|>\eta,~ \varepsilon^{-1} > |\mathbf{h}_T|^{-1} \right)\\ 
		 & & \quad\quad\quad+\mathbb{P}\left(\max_{\mathbf{\lambda} \in [0,2\pi]^2}|\widehat{f}_T(\mathbf{\lambda})-f(\lambda)|>\eta,~|\mathbf{h}_T|^{-1} > \varepsilon|T|^v\right)\\ 
		& \leq & \mathbb{P}\left(\max_{\mathbf{\lambda} \in [0,2\pi]^2}|\widehat{f}_T(\mathbf{\lambda})-f(\lambda)|>\eta,~ \varepsilon^{-1} \leq |\mathbf{h}_T|^{-1} \leq \varepsilon|T|^v\right)\\
		& & \quad\quad\quad+\mathbb{P}(\varepsilon|\mathbf{h}_T|^{-1}<1)+\mathbb{P}(|\mathbf{h}_T|^{-1}>\varepsilon|T|^v)\,.
	\end{eqnarray*}
	The last two probabilities converge to $0$ under the conditions on $\mathbf{h}_T$. It remains to prove that
	\begin{equation}\label{proof_51}
	\max_{\varepsilon^{-1} \leq |\mathbf{h}_T|^{-1} \leq \varepsilon|T|^v} \max_{\mathbf{\lambda} \in [0,2\pi]^2}|\widehat{f}_T(\mathbf{\lambda})-f(\mathbf{\lambda})| \xrightarrow{p} 0, \mbox{ as } T \rightarrow \infty.
	\end{equation}
	Denote $\bar{f}_T(\mathbf{\lambda})=\frac{1}{4\pi^2}\sum\limits_{\mathbf{j} \in\mathbb{Z}^2}R_V(\mathbf{j})k(j_1 {h_T}_1,j_2 {h_T}_2)\exp(-i\mathbf{j}'\mathbf{\lambda})$, where $R_V$ is defined in \eqref{auto_sample}. Recall that	$\widehat{f}_T(\mathbf{\lambda})=\frac{1}{4\pi^2}\sum\limits_{\mathbf{j} \in\mathbb{Z}^2}\widehat{R}_V(\mathbf{j})k(j_1 {h_T}_1,j_2 {h_T}_2)\exp(-i\mathbf{j}'\mathbf{\lambda})+o(1)$ where $\widehat{R}_V$ is defined in \eqref{auto_sample} with $\mu$ replaced by $\bar{V}_T$. 
	Then, we have
	\begin{equation*}
	\begin{aligned}
	\widehat{f}_T(\mathbf{\lambda})-f(\mathbf{\lambda})=a_1+a_2+a_3\,,
	\end{aligned}
	\end{equation*} 
	where $a_1=\widehat{f}_T(\mathbf{\lambda})-\bar{f}_T(\mathbf{\lambda})$,
	$a_2=\bar{f}_T(\mathbf{\lambda})-\mathbb{E}(\bar{f}_T(\mathbf{\lambda}))$, and 
	$a_3=\mathbb{E}(\bar{f}_T(\mathbf{\lambda}))-f(\mathbf{\lambda})$.
	For the third term $a_3$, by a similar argument as in the proof of Theorem \ref{thm3}(b), see \eqref{dev_proof_51b} below, it holds uniformly for $\varepsilon^{-1} \leq |\mathbf{h}_T|^{-1} \leq \varepsilon|T|^v$ that
	\begin{equation}\label{proof_51_1}
	\begin{aligned}
	\max_{\substack{\mathbf{\lambda} \in [0,2\pi]^2 \\ \varepsilon^{-1} \leq |\mathbf{h}_T|^{-1} \leq \varepsilon|T|^v}}|a_3| \leq \max_{\mathbf{\lambda} \in [0,2\pi]^2}|\mathbb{E}(\bar{f}_T(\mathbf{\lambda}))-f(\mathbf{\lambda})|=o(1)\,.
	\end{aligned}
	\end{equation}
	For the second term $a_2$, by Assumptions \ref{assautocov} and \ref{assk3}, we have
	\begin{equation}\label{proof_51_2}
	\begin{aligned}
	\max_{\substack{\mathbf{\lambda} \in [0,2\pi]^2 \\ \varepsilon^{-1} \leq |\mathbf{h}_T|^{-1} \leq \varepsilon|T|^v}}|a_2|\ \leq\ &\max_{\varepsilon^{-1} \leq |\mathbf{h}_T|^{-1} \leq \varepsilon|T|^v}\frac{1}{4\pi^2}\sum\limits_{\mathbf{j} \in\mathbb{Z}^2}|k(j_1 {h_T}_1,j_2 {h_T}_2)||R_V(\mathbf{r})-\mathbb{E}(R_V(\mathbf{r}))|\\
	=\ &O_p\left(|T|^{-v}\sum\limits_{\mathbf{j} \in\mathbb{Z}^2}\widetilde{k}\left(\frac{j_1}{\sqrt{\varepsilon}d^v_1},\frac{j_2}{\sqrt{\varepsilon}d^v_2}\right)\right) 
	= O_p\left(\varepsilon \int_{\mathbb{R}^2_+}\widetilde{k}(\mathbf{x})d\mathbf{x}\right)\\
	=\ &O_p(\varepsilon)\,.
	\end{aligned}
	\end{equation}   
	Since $\varepsilon$ is arbitrary, we have $\max_{\substack{\mathbf{\lambda} \in [0,2\pi]^2 \\ \varepsilon^{-1} \leq |\mathbf{h}_T|^{-1} \leq \varepsilon|T|^v}}|a_2| \xrightarrow{p} 0$. 
	Finally, for the first term $a_1$, by Assumption \ref{assk3}, we have
	\begin{equation}\label{proof_51_3}
	\begin{aligned}
	 \max_{\substack{\mathbf{\lambda} \in [0,2\pi]^2 \\ \varepsilon^{-1} \leq |\mathbf{h}_T|^{-1} \leq \varepsilon|T|^v}}|a_1|  \leq\ &\max_{\varepsilon^{-1} \leq |\mathbf{h}_T|^{-1} \leq \varepsilon|T|^v} \Bigg\{ \frac{1}{4\pi^2|T|}\sum\limits_{\mathbf{j} \in\mathbb{Z}^2}\left|\widetilde{k}(j_1 {h_T}_1,j_2 {h_T}_2) \right| 
	 \times \Bigg[\left|\sum\limits_{\mathbf{l},\mathbf{l}+\mathbf{j} \in T}(V(\mathbf{l})-\mu)(\bar{V}_T-\mu)\right|\\
	& \quad\quad\quad\quad\quad\quad+\left|\sum\limits_{\mathbf{l},\mathbf{l}+\mathbf{j} \in T}(V(\mathbf{l}+\mathbf{j})-\mu)\right|\left|\bar{V}_T-\mu\right|+|\bar{V}_T-\mu|^2\Bigg]\Bigg\}\\
	=\ &O_p\left(\frac{1}{|T|}\sum\limits_{\mathbf{j} \in\mathbb{Z}^2}\widetilde{k}\left(\frac{j_1}{\sqrt{\varepsilon}d^v_1},\frac{j_2}{\sqrt{\varepsilon}d^v_2}\right)\right) \xrightarrow{p} 0\,,
	\end{aligned}
	\end{equation}
	where boundedness of the spectral density $f$ is used. As a result, by (\ref{proof_51_1}), (\ref{proof_51_2}) and (\ref{proof_51_3}), we have \eqref{proof_51} and the proof is completed.  
\end{proof}

\subsection{Proof of Theorem \ref{thm3}(b) with lemmas}
Before we prove Theorem \ref{thm3}(b), we need the following subsection about summability of joint cumulants implied by the geometric-moment contraction condition GMC($\alpha$) in Assumption \ref{assgmc}.
\subsubsection{Summability of Cumulants under GMC($\alpha$) for spatial process}
Assume that $V(\mathbf{j})=G(\varepsilon_{\mathbf{j}-\mathbf{s}}: \mathbf{s} \in\mathbb{Z}^2),~\mathbf{j} \in\mathbb{Z}^2,$ where $G(\cdot)$ is a measurable function and $\{\varepsilon_\mathbf{i}\}_{\mathbf{i} \in\mathbb{Z}^2}$ is an i.i.d. random field. Assume the geometric-moment contraction condition GMC($\alpha$) in Assumption \ref{assgmc} holds.
Let $(U_1,...,U_k)$ be a random vector. Then the joint cumulant is defined as 
\begin{equation*}
\text{cum}(U_1,...U_k)=\sum(-1)^p(p-1)!\mathbb{E}\left(\prod\limits_{j \in {V_1}}U_j\right)\ldots\mathbb{E}\left(\prod\limits_{j \in {V_p}}U_j\right)\,,
\end{equation*}    
where $V_1,...,V_p$ is a partition of the set $\{1,2,...,k\}$ and the sum is taken over all such partition.


\begin{lem}\label{AlemC2}
	Assume that there exists $C_1 >0$, $\rho \in (0,1)$ and $k \in\mathbb{N},~k \geq 2$ such that $\mathbb{E}\{|V(\mathbf{j})|^k\} < \infty$ and $\mathbb{E}\{|V(\mathbf{j})-\widetilde{V}(\mathbf{j})|^k\} \leq C_1\rho^{\|\mathbf{j}\|}$ for $\mathbf{j} \in\mathbb{N}^2$. Then, whenever $\mathbf{0} \leq \|\mathbf{m}_1\| \leq \|\mathbf{m}_2\| \leq...\leq\|\mathbf{m}_{k-1}\|$ where $\mathbf{m}_1,..., \mathbf{m}_{k-1} \in\mathbb{N}^2$, $\mathbf{m}_k=(m_{k1},m_{k2})$ and $\|\mathbf{m}_k\|=\max\{|m_{k1}|,|m_{k2}|\}$,        
	\begin{equation*}
	\Big|\mathrm{cum}(V(\mathbf{0}),V(\mathbf{m}_1),V(\mathbf{m}_2),...,V(\mathbf{m}_{k-1}))\Big| \leq {C\rho}^{\frac{\|\mathbf{m}_{k-1}\|}{2k(k-1)}}\,,
	\end{equation*}
	where the constant $C>0$ is independent of $\mathbf{m}_1,...,\mathbf{m}_{k-1}$.
\end{lem}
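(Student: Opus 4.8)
The plan is to reduce the bound to a single geometric decay estimate obtained by a decoupling argument, exploiting the elementary fact that a joint cumulant vanishes whenever its arguments split into two mutually independent blocks. Write $\mathbf{m}_0=\mathbf{0}$ and set $D=\|\mathbf{m}_{k-1}\|$, so that $0=\|\mathbf{m}_0\|\le\|\mathbf{m}_1\|\le\cdots\le\|\mathbf{m}_{k-1}\|=D$. Since these $k$ norms partition $[0,D]$ into $k-1$ consecutive gaps whose lengths sum to $D$, there is an index $j$ with $\|\mathbf{m}_{j+1}\|-\|\mathbf{m}_j\|\ge D/(k-1)$; call this gap $g$ and fix the cutting level $\ell=(\|\mathbf{m}_j\|+\|\mathbf{m}_{j+1}\|)/2$, so that the inner centres $\mathbf{m}_0,\dots,\mathbf{m}_j$ satisfy $\|\mathbf{m}_i\|\le\ell-g/2$ while the outer centres $\mathbf{m}_{j+1},\dots,\mathbf{m}_{k-1}$ satisfy $\|\mathbf{m}_i\|\ge\ell+g/2$.

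Next I would build coupled surrogates supported on disjoint, hence independent, portions of the innovation field. Using two further independent copies $\{\varepsilon'_\mathbf{t}\}$ and $\{\varepsilon''_\mathbf{t}\}$ of $\{\varepsilon_\mathbf{t}\}$, I define for each inner centre $\widehat V(\mathbf{m}_i)$ by replacing every innovation $\varepsilon_\mathbf{t}$ with $\|\mathbf{t}\|>\ell$ by $\varepsilon'_\mathbf{t}$, and for each outer centre $\widehat V(\mathbf{m}_i)$ by replacing every $\varepsilon_\mathbf{t}$ with $\|\mathbf{t}\|\le\ell$ by $\varepsilon''_\mathbf{t}$. Each surrogate has the same law as the corresponding $V(\mathbf{m}_i)$; the inner surrogates are measurable with respect to $\{\varepsilon_\mathbf{t}:\|\mathbf{t}\|\le\ell\}\cup\{\varepsilon'_\mathbf{t}\}$, the outer ones with respect to $\{\varepsilon_\mathbf{t}:\|\mathbf{t}\|>\ell\}\cup\{\varepsilon''_\mathbf{t}\}$, and these two collections are independent. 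Consequently the inner and outer surrogate blocks are independent, so $\mathrm{cum}(\widehat V(\mathbf{m}_0),\dots,\widehat V(\mathbf{m}_{k-1}))=0$.

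I would then compare the true cumulant with this vanishing one by multilinearity. Writing $V(\mathbf{m}_i)=\widehat V(\mathbf{m}_i)+\Delta_i$ with $\Delta_i=V(\mathbf{m}_i)-\widehat V(\mathbf{m}_i)$ and expanding, the term with no factor $\Delta_i$ is the vanishing cumulant, so $\mathrm{cum}(V(\mathbf{m}_0),\dots,V(\mathbf{m}_{k-1}))$ equals a sum of at most $2^k-1$ cumulants each containing at least one $\Delta_i$. The elementary moment bound $|\mathrm{cum}(U_1,\dots,U_k)|\le C_k\prod_i\|U_i\|_k$, which follows from the partition formula for cumulants together with H\"older's inequality, then applies: since $\|\widehat V(\mathbf{m}_i)\|_k=\|V(\mathbf{m}_i)\|_k\le M:=\|V(\mathbf{0})\|_k<\infty$, every such term is at most $C_k M^{\,k-1}\max_i\|\Delta_i\|_k$. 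For an inner centre the replaced sites lie at max-distance exceeding $g/2$ from $\mathbf{m}_i$, and likewise for an outer centre, so by stationarity and a telescoping sum over the replaced sites, $\|\Delta_i\|_k\le\sum_{\|\mathbf{u}\|>g/2}\delta_{\mathbf{u},k}\le C_1^{1/k}\sum_{\|\mathbf{u}\|>g/2}\rho^{\|\mathbf{u}\|/k}$, where the hypothesis supplies $\delta_{\mathbf{u},k}\le C_1^{1/k}\rho^{\|\mathbf{u}\|/k}$. Summing this geometric series over the $\ell^\infty$-lattice and invoking $g\ge D/(k-1)$ yields $\|\Delta_i\|_k\le C\,\rho^{\,D/(2k(k-1))}$, which gives the claimed bound.

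The main obstacle I anticipate is the spatial bookkeeping rather than the cumulant algebra. First, one must verify that a gap in the norm-ordered sequence genuinely induces an independence-preserving split of the two-dimensional innovation field; the symmetric coupling with two separate copies is exactly what guarantees this, and the independence of the inner and outer surrogate $\sigma$-fields must be checked carefully. Second, summing $\rho^{\|\mathbf{u}\|/k}$ over $\{\|\mathbf{u}\|>g/2\}\subset\mathbb{Z}^2$ generates a polynomial perimeter factor of order $g$ from the $\ell^\infty$-shells; this factor is dominated by the geometric decay, and it is precisely to absorb it, together with the halving of the gap and the $1/k$ loss from passing to the $k$-th root of the physical-dependence hypothesis, that the generous denominator $2k(k-1)$ is used, so that the clean rate $\rho^{\,\|\mathbf{m}_{k-1}\|/(2k(k-1))}$ survives with a constant independent of the $\mathbf{m}_i$.
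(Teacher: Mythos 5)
Your argument is sound and is, at its core, the same strategy the paper uses: locate the largest gap $g\ge\|\mathbf{m}_{k-1}\|/(k-1)$ in the ordered norms, build surrogates that are independent across that gap so the fully coupled cumulant vanishes, expand by multilinearity so every surviving term carries at least one coupling error, and control that error in $\mathbb{L}^k$ by the geometric decay together with the partition-formula-plus-H\"older bound $|\mathrm{cum}(U_1,\dots,U_k)|\le C_k\prod_i\|U_i\|_k$. The differences are in execution. The paper cuts with a ball of radius $n_l/2$ around \emph{each} centre (after recentering at $\mathbf{m}_{l-1}$), telescopes one argument at a time to get $k$ error terms, runs this for every gap $l$ and takes the minimum at the end; you cut once at an absolute level set $\|\mathbf{t}\|=\ell$ and do a full binomial expansion into $2^k-1$ error terms. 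Both are valid, and your independence check for the two surrogate $\sigma$-fields is correct.

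Two caveats on the step where you bound $\|\Delta_i\|_k$. First, your level-set coupling replaces the region $\{\|\mathbf{t}\|>\ell\}$ (resp.\ $\{\|\mathbf{t}\|\le\ell\}$) rather than the complement of a ball centred at $\mathbf{m}_i$, so the hypothesis $\mathbb{E}|V(\mathbf{j})-\widetilde V(\mathbf{j})|^k\le C_1\rho^{\|\mathbf{j}\|}$ --- which in this lemma is the \emph{block} coupling of Assumption $\mathcal{P}$.7, exactly matching the paper's construction --- does not apply verbatim; you are forced into the site-by-site telescoping with the single-site measure $\delta_{\mathbf{u},k}$, and the decay $\delta_{\mathbf{u},k}\le C\rho^{\|\mathbf{u}\|/k}$ must first be \emph{derived} from the block hypothesis (it does follow, by a two-application triangle-inequality argument using that the single replaced site lies inside the replaced block, but this needs to be said). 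Second, the resulting lattice sum $\sum_{\|\mathbf{u}\|>g/2}\rho^{\|\mathbf{u}\|/k}$ carries a perimeter factor of order $g$, so you actually obtain $C\,g\,\rho^{g/(2k)}$, which is $C\tilde\rho^{\,\|\mathbf{m}_{k-1}\|/(2k(k-1))}$ only after replacing $\rho$ by some $\tilde\rho\in(\rho,1)$; the paper's direct use of the block bound $\|V-\widetilde V_{n_l/2}\|_k\le C_1^{1/k}\rho^{n_l/(2k)}$ avoids both issues and delivers the stated exponent with the original $\rho$. Neither point affects any downstream use of the lemma (only geometric decay is ever needed), but both should be made explicit.
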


\begin{proof}[Proof of Lemma \ref{AlemC2}]
	Let $C>0$ be a generic constant which is independent of $\mathbf{m}_1,\ldots, \mathbf{m}_{k-1}$. In the proof, $C$ may vary from line to line and it only depends on $C_1,\rho$ and the moments $\mathbb{E}(|V(\mathbf{j})|^i), 1 \leq i \leq k$. Let $J=\text{cum}(V(\mathbf{0}),V(\mathbf{m}_1),...,V(\mathbf{m}_{k-1}))$, where $\mathbf{m}_0=\mathbf{0}$, $l \in \mathbb{N}$, $1 \leq l \leq k-1$.  Define coupled version $\widetilde{V}_l(\mathbf{m}_i)$ as 
	\begin{equation*}
	\widetilde{V}_l(\mathbf{m}_i)=G\left(\varepsilon^*_{\mathbf{m}_i-\mathbf{s}}, \mathbf{s} \in\mathbb{Z}^2\right) \quad \forall \mathbf{m}_i \in\mathbb{Z}^2,~l \in \mathbb{N}\,,
	\end{equation*}
	where $$\varepsilon^*_{\mathbf{m}_i-\mathbf{s}} = \left\{ \begin{array}{lcr}
	\varepsilon_{\mathbf{m}_i-\mathbf{s}}, & \|\mathbf{s}\|<l \,, \\
	\widetilde{\varepsilon}_{\mathbf{m}_i-\mathbf{s}}, & \|\mathbf{s}\| \geq l \,.
	\end{array}\right.$$
	Also, denote
	\begin{equation*}
	\widetilde{\widetilde{V}}_l(\mathbf{m}_i)=G\left(\varepsilon^{**}_{\mathbf{m}_i-\mathbf{s}}, \mathbf{s} \in\mathbb{Z}^2\right)\quad \forall \mathbf{m}_i \in\mathbb{Z}^2\,,
	\end{equation*}
	where $$\varepsilon^{**}_{\mathbf{m}_i-\mathbf{s}} = \left\{ \begin{array}{lcr}
	\varepsilon_{\mathbf{m}_i-\mathbf{s}}, & \|\mathbf{s}\|<l\,, \\
	\widetilde{\widetilde{\varepsilon}}_{\mathbf{m}_i-\mathbf{s}}, & \|\mathbf{s}\| \geq l\,.
	\end{array}\right.$$
	
	
	Note that $\{\varepsilon_\mathbf{i}\}_{\mathbf{i} \in\mathbb{Z}^2}, \{\widetilde{\varepsilon}_\mathbf{i}\}_{\mathbf{i} \in\mathbb{Z}^2}$ and $\{\widetilde{\widetilde{\varepsilon}}_\mathbf{i}\}_{\mathbf{i} \in\mathbb{Z}^2}$ are i.i.d. random fields.	By the GMC property, we have
	\begin{equation*}
	\begin{aligned}
	\mathbb{E}\left(|V(\mathbf{m}_i)-\widetilde{V}_l(\mathbf{m}_i)|^{\alpha}\right) \leq C\rho^l \quad \quad 
	\mbox{and} \quad\quad \mathbb{E}\left(|V(\mathbf{m}_i)-\widetilde{\widetilde{V}}_l(\mathbf{m}_i)|^{\alpha}\right) \leq C\rho^l\,.
	\end{aligned}
	\end{equation*}
	Define $n_l=\|\mathbf{m}_l\|-\|\mathbf{m}_{l-1}\|, 1 \leq l \leq k-1$ and $\mathbf{m}_0=\mathbf{0}$. We have
	\begin{equation*}
	\begin{aligned}
	J =\ &\text{cum}\big(V(\mathbf{0}),V(\mathbf{m}_1),\ldots,V(\mathbf{m}_{l-1}),\ldots,V(\mathbf{m}_{k-1})\big)\\
	=\ &\text{cum}\big(V(\mathbf{m}_0-\mathbf{m}_{l-1}),V(\mathbf{m}_1-\mathbf{m}_{l-1}),\ldots,V(\mathbf{0}),\ldots,V(\mathbf{m}_{k-1}-\mathbf{m}_{l-1})\big)\\
	=\ &\text{cum}\big(V(\mathbf{m}_0-\mathbf{m}_{l-1}),\ldots,V(\mathbf{0}),V(\mathbf{m}_l-\mathbf{m}_{l-1})-\widetilde{V}_{\frac{n_l}{2}}(\mathbf{m}_l-\mathbf{m}_{l-1}),V(\mathbf{m}_{l+1}-\mathbf{m}_{l-1}),\ldots,V(\mathbf{m}_{k-1}-\mathbf{m}_{l-1})\big)\\
	&\quad\quad +\sum\limits^{k-l-1}_{j=1}\text{cum}\big(V(\mathbf{m}_0-\mathbf{m}_{l-1}),\ldots,V(\mathbf{0}),\widetilde{V}_{\frac{n_l}{2}}(\mathbf{m}_l-\mathbf{m}_{l-1}),\ldots,\widetilde{V}_{\frac{n_l}{2}}(\mathbf{m}_{l+j-1}-\mathbf{m}_{l-1}),\\
	&\quad\quad V(\mathbf{m}_{l+j}-\mathbf{m}_{l-1})-\widetilde{V}_{\frac{n_l}{2}}(\mathbf{m}_{l+j}-\mathbf{m}_{l-1}),V(\mathbf{m}_{l+j+1}-\mathbf{m}_{l-1}),\ldots,V(\mathbf{m}_{k-1}-\mathbf{m}_{l-1})\big)\\
	&\quad\quad +\text{cum}\big(V(\mathbf{m}_0-\mathbf{m}_{l-1}),\ldots,V(\mathbf{0}),\widetilde{V}_{\frac{n_l}{2}}(\mathbf{m}_l-\mathbf{m}_{l-1}),\ldots,\widetilde{V}_{\frac{n_l}{2}}(\mathbf{m}_{k-1}-\mathbf{m}_{l-1})\big)\\
	=\ &\widetilde{A}_0+\sum\limits^{k-l-1}_{j=1}\widetilde{A}_j+\text{cum}\big(V(\mathbf{m}_0-\mathbf{m}_{l-1}),\ldots,V(\mathbf{0}),\widetilde{V}_{\frac{n_l}{2}}(\mathbf{m}_l-\mathbf{m}_{l-1}),\ldots,\widetilde{V}_{\frac{n_l}{2}}(\mathbf{m}_{k-1}-\mathbf{m}_{l-1})\big)\\
	=\ &\widetilde{A}_0+\sum\limits^{k-l-1}_{j=1}\widetilde{A}_j+\widetilde{\widetilde{B}}_0+\sum\limits^{l-1}_{i=1}\widetilde{\widetilde{B}}_i+C_0\,,
	\end{aligned}
	\end{equation*}
	where
	\begin{equation*}
	\begin{aligned}
	&\widetilde{A}_0=\text{cum}\big(V(\mathbf{m}_0-\mathbf{m}_{l-1}),\ldots,V(\mathbf{0}),V(\mathbf{m}_l-\mathbf{m}_{l-1})-\widetilde{V}_{\frac{n_l}{2}}(\mathbf{m}_l-\mathbf{m}_{l-1}),\\
	&\quad\quad\quad\quad\quad\quad\quad\quad\quad\quad\quad\quad\quad\quad\quad\quad\quad\quad\quad V(\mathbf{m}_{l+1}-\mathbf{m}_{l-1}),\ldots,V(\mathbf{m}_{k-1}-\mathbf{m}_{l-1})\big)\,,\\
	&\widetilde{A}_j=\text{cum}\big(V(\mathbf{m}_0-\mathbf{m}_{l-1}),\ldots,V(\mathbf{0}),\widetilde{V}_{\frac{n_l}{2}}(\mathbf{m}_l-\mathbf{m}_{l-1}),\ldots,\widetilde{V}_{\frac{n_l}{2}}(\mathbf{m}_{l+j-1}-\mathbf{m}_{l-1}),\\
	&\quad\quad V(\mathbf{m}_{l+j}-\mathbf{m}_{l-1})-\widetilde{V}_{\frac{n_l}{2}}(\mathbf{m}_{l+j}-\mathbf{m}_{l-1}),V(\mathbf{m}_{l+j+1}-\mathbf{m}_{l-1}),\ldots,V(\mathbf{m}_{k-1}-\mathbf{m}_{l-1})\big)\,,\\	
	&\widetilde{\widetilde{B}}_0=\text{cum}\big(V(\mathbf{m}_0-\mathbf{m}_{l-1}),\ldots,V(\mathbf{0})-\widetilde{\widetilde{V}}_{\frac{n_l}{2}}(\mathbf{0}),\widetilde{V}_{\frac{n_l}{2}}(\mathbf{m}_l-\mathbf{m}_{l-1}),\ldots,\widetilde{V}_{\frac{n_l}{2}}(\mathbf{m}_{k-1}-\mathbf{m}_{l-1})\big)\,,\\
	&\widetilde{\widetilde{B}}_i=\text{cum}\big(V(\mathbf{m}_0-\mathbf{m}_{l-1}),\ldots,V(\mathbf{m}_{l-i-2}-\mathbf{m}_{l-1}), V(\mathbf{m}_{l-i-1}-\mathbf{m}_{l-1})-\widetilde{\widetilde{V}}_{\frac{n_l}{2}}(\mathbf{m}_{l-i-1}-\mathbf{m}_{l-1}),\ldots,\\
	&\quad\quad\quad\quad\,\, \widetilde{\widetilde{V}}_{\frac{n_l}{2}}(\mathbf{0}),\widetilde{V}_{\frac{n_l}{2}}(\mathbf{m}_l-\mathbf{m}_{l-1}),\ldots,\widetilde{V}_{\frac{n_l}{2}}(\mathbf{m}_{k-1}-\mathbf{m}_{l-1})\big)\,,\\
	&C_0=\text{cum}\big(\widetilde{\widetilde{V}}_{\frac{n_l}{2}}(\mathbf{m}_0-\mathbf{m}_{l-1}),\ldots,\widetilde{\widetilde{V}}_{\frac{n_l}{2}}(\mathbf{0}),\widetilde{V}_{\frac{n_l}{2}}(\mathbf{m}_l-\mathbf{m}_{l-1}),\ldots,\widetilde{V}_{\frac{n_l}{2}}(\mathbf{m}_{k-1}-\mathbf{m}_{l-1})\big)\,.
	\end{aligned}
	\end{equation*}       
	Since $\big(\widetilde{\widetilde{V}}_{\frac{n_l}{2}}(\mathbf{m}_0-\mathbf{m}_{l-1}),\ldots,\widetilde{\widetilde{V}}_{\frac{n_l}{2}}(\mathbf{0})\big)$ and $\big(\widetilde{\widetilde{V}}_{\frac{n_l}{2}}(\mathbf{m}_l-\mathbf{m}_{l-1}),\ldots,\widetilde{\widetilde{V}}_{\frac{n_l}{2}}(\mathbf{m}_{k-1}-\mathbf{m}_{l-1})\big)$ are independent, we have $C_0=0$. We shall now show that $|\widetilde{A}_0| \leq {C\rho}^\frac{n_l}{2k}$. To this end, let $U_j=V(\mathbf{m}_j-\mathbf{m}_{l-1})$ for $j=0,1,\ldots,k-1$, $j \neq l$ and $U_l=V(\mathbf{m}_l-\mathbf{m}_{l-1})-\widetilde{V}_{\frac{n_l}{2}}(\mathbf{m}_l-\mathbf{m}_{l-1}).$ Let $|F|$ be the cardinality of the set $F$. For any subset $F \subset \{0,1,\ldots,k-1\}$ such that $l \notin F,$ by H{\"o}lder's and Jensen's inequity, we have
	\begin{equation*}
	\quad \left|\mathbb{E}\left(\prod\limits_{j \in F}U_j\right)\right
	| \leq \mathbb{E}\left(|V(\mathbf{0})|^{|F|}\right)\,,
	\end{equation*}
	and
	\begin{equation*}
	\begin{aligned}
	 \left|\mathbb{E}\left(U_l\prod\limits_{j \in F}U_j\right)\right| &\leq \|U_l\|_{1+|F|}\mathbb{E}\left(\prod\limits_{j \in F}|U_j|^{\frac{|F|+1}{|F|}}\right)^{\frac{|F|}{1+|F|}} 
	 \leq \|U_l\|_k\left(\mathbb{E}\left(|V(\mathbf{0})|^{|F|+1}\right)\right)^{\frac{|F|}{1+|F|}}
	\leq \left({C_1\rho}^{\frac{n_l}{2k}}\right)C^{*}\,,
	\end{aligned}
	\end{equation*}
	where $C^{*}=\sum\limits^{k-1}_{i=0}\mathbb{E}\left(|V(\mathbf{0})|^{i+1}\right)^{\frac{i}{1+i}}$. By definition of joint cumulant, $|\widetilde{A}_0| \leq {C\rho}^{\frac{n_l}{2k}}$ for some constant $C$. 
Similarly, for $j=1,\ldots,k-l-1$, we have $|\widetilde{A}_j| \leq {C\rho}^{\frac{n_l}{2k}}$, 
	and $|\widetilde{\widetilde{B}}_i| \leq {C\rho}^{\frac{n_l}{2k}}$ for $i=0,\ldots,l-1$,
	which implies that $|J| \leq {C\rho}^{\frac{n_l}{2k}}$ for $1 \leq l \leq k-1$. Hence, we have
	$ |J| \leq C \min\limits_{1 \leq l \leq {k-1}}{\rho}^{\frac{n_l}{2k}}$.	
     Since 
	\begin{equation*}
	\begin{aligned}
	&\|\mathbf{m}_{k-1}\|=\sum\limits^{k-1}_{l=1} (\|\mathbf{m}_l\|-\|\mathbf{m}_{l-1}\|)=\sum\limits^{k-1}_{l=1}n_l \leq (k-1)\max\limits_{1 \leq l \leq {k-1}}n_l \,,
	\end{aligned}
	\end{equation*}
	we have $\frac{\|\mathbf{m}_{k-1}\|}{k-1} \leq \max\limits_{1 \leq l \leq {k-1}}n_l$. Finally, we have
	$|J| \leq {C\rho}^{\max\limits_{1 \leq l \leq {k-1}}\frac{n_l}{2k}} \leq {C\rho}^{\frac{\|\mathbf{m}_{k-1}\|}{2k(k-1)}} \,$.

\end{proof}

\begin{lem}\label{AlemC3}
	Let the sequence of sets indexed by $T$ be $\mathbb{S}_T \subset \mathbb{N}^2$ satisfying $\mathbb{S}_T=\{(t_1,t_2),1 \leq t_k \leq S_{Tk}, k=1,2\} \subset T$, where $T=\{(t_1,t_2),1 \leq t_k \leq d_{k}, k=1,2\}$, $S_{T1} \leq d_1$ and $S_{T2} \leq d_2$, and define another sequence of sets indexed by $T$ be $\mathbb{B}_T \subset \mathbb{N}^2$, $\mathbb{B}_T=\{(l_1,l_2),1 \leq l_k \leq B_{Tk}, k=1,2\}$ with $(B_{T1},B_{T2})=\left(\frac{1}{{h_T}_1},\frac{1}{{h_T}_2}\right)$ and $|\mathbb{B}_T|=o(|\mathbb{S}_T|)$, and
	\begin{equation*}
	U_\mathbf{j}=U_\mathbf{j}(\lambda)=(4\pi^2)^{-1}\sum\limits_{\mathbf{l} \in \mathbf{L}_B}V(\mathbf{j})V(\mathbf{j}+\mathbf{l})k(l_1 {h_T}_1,l_2 {h_T}_2)\cos(\mathbf{l}'\mathbf{\lambda})\,,
	\end{equation*}
	where $\mathbf{L}_B = \{(l_1,l_2)\in \mathbb{Z}^2: (|l_1|,|l_2|) \in {\mathbb{B}_T}\}$. Then, under GMC(4), we have $\left\|\sum\limits_{\mathbf{j} \in \mathbb{S}_T}\big(U_\mathbf{j}-\mathbb{E}(U_\mathbf{j})\big)\right\|^2_2 \sim |\mathbb{S}_T||\mathbb{B}_T|\sigma^2$. $\\$ 
\end{lem}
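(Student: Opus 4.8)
The plan is a direct second-moment computation. Expanding the square gives
$$\Bigl\|\sum_{\mathbf{j}\in\mathbb{S}_T}\bigl(U_\mathbf{j}-\mathbb{E}(U_\mathbf{j})\bigr)\Bigr\|_2^2=\sum_{\mathbf{j},\mathbf{j}'\in\mathbb{S}_T}\mathrm{Cov}(U_\mathbf{j},U_{\mathbf{j}'}),$$
so it suffices to evaluate the right-hand side to leading order. We may assume $\mu=0$. Abbreviating $k_\mathbf{l}=k(l_1{h_T}_1,l_2{h_T}_2)$ and inserting the definition of $U_\mathbf{j}$,
$$\mathrm{Cov}(U_\mathbf{j},U_{\mathbf{j}'})=(4\pi^2)^{-2}\sum_{\mathbf{l},\mathbf{m}\in\mathbf{L}_B}k_\mathbf{l}k_\mathbf{m}\cos(\mathbf{l}'\lambda)\cos(\mathbf{m}'\lambda)\,\mathrm{Cov}\bigl(V(\mathbf{j})V(\mathbf{j}+\mathbf{l}),V(\mathbf{j}')V(\mathbf{j}'+\mathbf{m})\bigr),$$
and I would apply the product-moment-to-cumulant identity to the fourth-order factor. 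With $\mathbf{h}=\mathbf{j}'-\mathbf{j}$ this produces two covariance pairings, $\gamma(\mathbf{h})\gamma(\mathbf{h}+\mathbf{m}-\mathbf{l})$ and $\gamma(\mathbf{h}+\mathbf{m})\gamma(\mathbf{h}-\mathbf{l})$, together with the joint fourth cumulant $\mathrm{cum}(V(\mathbf{j}),V(\mathbf{j}+\mathbf{l}),V(\mathbf{j}'),V(\mathbf{j}'+\mathbf{m}))$.

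Both pairings contribute at the leading order $|\mathbb{S}_T||\mathbb{B}_T|$, and I would treat them symmetrically. For the first pairing, set $\mathbf{d}=\mathbf{m}-\mathbf{l}$; summability of $\gamma$ forces $\mathbf{h}$ and $\mathbf{d}$ to be effectively bounded, so the $\mathbf{j}$-sum yields $|\mathbb{S}_T|(1+o(1))$, and slow variation of the kernel (which changes on the scale $B_{Tk}=1/{h_T}_k\to\infty$) gives $k_{\mathbf{l}+\mathbf{d}}=k_\mathbf{l}(1+o(1))$. Writing $\cos(\mathbf{l}'\lambda)\cos((\mathbf{l}+\mathbf{d})'\lambda)=\tfrac12\cos(\mathbf{d}'\lambda)+\tfrac12\cos((2\mathbf{l}+\mathbf{d})'\lambda)$, the oscillatory part is of smaller order by cancellation, while the Riemann-sum approximation $\sum_{\mathbf{l}\in\mathbf{L}_B}k_\mathbf{l}^2=|\mathbb{B}_T|\bigl(\int_{\mathbb{R}^2}k(\mathbf{x})^2\,d\mathbf{x}+o(1)\bigr)$ extracts the factor $|\mathbb{B}_T|$. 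The surviving $\gamma$-sum collapses through $\sum_\mathbf{h}\gamma(\mathbf{h})e^{i\mathbf{h}'\lambda}=4\pi^2 f(\lambda)$ to give $\tfrac12|\mathbb{S}_T||\mathbb{B}_T|f(\lambda)^2\int_{\mathbb{R}^2}k^2$. For the second pairing the analogous substitution is $\mathbf{l}=\mathbf{h}-\mathbf{a}$, $\mathbf{m}=\mathbf{b}-\mathbf{h}$ with $\mathbf{a},\mathbf{b}$ bounded; here $\mathbf{h}$ ranges up to order $B_{Tk}$, so the hypothesis $|\mathbb{B}_T|=o(|\mathbb{S}_T|)$ is exactly what guarantees the $\mathbf{j}$-sum is still $|\mathbb{S}_T|(1+o(1))$, and the same kernel, cosine and $\gamma$ manipulations give a second contribution $\tfrac12|\mathbb{S}_T||\mathbb{B}_T|f(\lambda)^2\int_{\mathbb{R}^2}k^2$. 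Adding them identifies $\sigma^2=f(\lambda)^2\int_{\mathbb{R}^2}k(\mathbf{x})^2\,d\mathbf{x}$.

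It then remains to show the fourth-cumulant contribution is $o(|\mathbb{S}_T||\mathbb{B}_T|)$. By stationarity the cumulant equals $\mathrm{cum}(V(\mathbf{0}),V(\mathbf{l}),V(\mathbf{h}),V(\mathbf{h}+\mathbf{m}))$, so after summing over $\mathbf{j}$ the term is bounded by $C|\mathbb{S}_T|\sum_{\mathbf{l},\mathbf{h},\mathbf{m}}\bigl|\mathrm{cum}(V(\mathbf{0}),V(\mathbf{l}),V(\mathbf{h}),V(\mathbf{h}+\mathbf{m}))\bigr|$. Here I would invoke Lemma \ref{AlemC2} with $k=4$ (available under GMC($4$)), which, after reducing to ordered arguments by stationarity, bounds this cumulant by an exponentially decaying function of the largest of the three relative positions; the geometric bound is summable over $\mathbb{Z}^2\times\mathbb{Z}^2\times\mathbb{Z}^2$, so the cumulant term is $O(|\mathbb{S}_T|)$. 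Since ${h_T}_k\to0$ forces $|\mathbb{B}_T|\to\infty$, this is $o(|\mathbb{S}_T||\mathbb{B}_T|)$, which together with the preceding paragraph yields the claimed equivalence.

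The main obstacle will be the precise bookkeeping of the two pairing terms rather than any single estimate: one must simultaneously justify replacing $k_{\mathbf{l}+\mathbf{d}}$ by $k_\mathbf{l}$ uniformly over the effectively bounded shifts, prove that the oscillatory cosine sums $\sum_{\mathbf{l}}k_\mathbf{l}^2\cos((2\mathbf{l}+\mathbf{d})'\lambda)$ are of smaller order (so they do not pollute the constant, away from the degenerate frequencies where $2\lambda\equiv\mathbf{0}$), and control the boundary of the $\mathbf{j}$-sum in the second pairing, where lags of order $B_{Tk}$ are genuinely present and the condition $|\mathbb{B}_T|=o(|\mathbb{S}_T|)$ is essential. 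Each of these errors must be shown to be $o(1)$ relative to the leading term, not merely bounded.
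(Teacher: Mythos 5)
Your proposal is correct and follows essentially the same route as the paper: the one genuinely new ingredient in the paper's argument is the absolute summability of the fourth-order joint cumulants obtained from Lemma \ref{AlemC2} under GMC(4), which is exactly what you invoke to dispose of the cumulant term, while the remaining pairing/variance computation (which the paper delegates to (3.9)--(3.12) of \cite{RM1984}) is what you carry out explicitly. Your explicit bookkeeping of the two covariance pairings, the identification of $\sigma^2=f(\lambda)^2\int_{\mathbb{R}^2}k^2(\mathbf{x})\,d\mathbf{x}$ away from degenerate frequencies, and the role of $|\mathbb{B}_T|=o(|\mathbb{S}_T|)$ in the second pairing are all consistent with that cited computation.
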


\begin{proof}[Proof of Lemma \ref{AlemC3}]
	Let $L(s)=\{(\mathbf{m}_1,\mathbf{m}_2,\mathbf{m}_3) \in {\mathbb{Z}^2} \times {\mathbb{Z}^2} \times {\mathbb{Z}^2}:\max\limits_{1 \leq i \leq 3}\|\mathbf{m}_i\|=s\}$
	and $C(\mathbf{m}_1,\mathbf{m}_2,\mathbf{m}_3)=\text{cum}(X_{\mathbf{0}},X_{\mathbf{m}_1},X_{\mathbf{m}_2},X_{\mathbf{m}_3})$.
	Thus, $|L(s)| \leq 24(2s+1)^3$, By Lemma \ref{AlemC2}, we have
	\begin{equation*}
	\begin{aligned}
	\sum\limits_{\mathbf{m}_1,\mathbf{m}_2,\mathbf{m}_3 \in\mathbb{Z}^2}|C(\mathbf{m}_1,\mathbf{m}_2,\mathbf{m}_3)| &\leq C^{**}\sum\limits^{\infty}_{s=0}\sum\limits_{(\mathbf{m}_1,\mathbf{m}_2,\mathbf{m}_3) \in L(s)}|C(\mathbf{m}_1,\mathbf{m}_2,\mathbf{m}_3)|
	\leq C^{***}\sum\limits^{\infty}_{s=0} s^3\rho^{\frac{s}{2(4)(4-1)}}
	< \infty \,.
	\end{aligned}
	\end{equation*}
	for some constants $C^{**}>0$ and $C^{***}>0$. Then, the lemma follows from similar arguments in (3.9)-(3.12) of \cite{RM1984}, page 1174.
\end{proof}

\subsubsection{Uniform consistency of the smoothed periodogram spectral density estimators}

Define the smoothed periodogram spectral density estimator as
\begin{equation*}
\widetilde{f}_T(\mathbf{\lambda})=\frac{1}{4\pi^2}\sum\limits_{\mathbf{j} \in\mathbb{Z}^2}R_V(\mathbf{j})k(j_1 {h_T}_1,j_2 {h_T}_2)\exp(-i\mathbf{j}'\mathbf{\lambda})\,,
\end{equation*}
where $R_V(\mathbf{j})=\frac{1}{|T|}\sum\limits_{\mathbf{l},\mathbf{l}+\mathbf{j} \in T}\big[V(\mathbf{l})-\mu\big]\big[V(\mathbf{l}+\mathbf{j})-\mu\big]$, 
and $k(\cdot)$ is defined as in \eqref{smallk} satisfying Assumption \ref{assk4}.
The following theorem prove the uniform consistency of the smoothed periodogram spectral density estimator $\widetilde{f}_T(\mathbf{\lambda})$ using the results of Lemma \ref{AlemC3}. Without loss of generality, we assume that $\mu=\mathbb{E}(V(\mathbf{0}))=0$ in the following.
\begin{thm}\label{AthmC4}
	Define $\mathbb{B}_T=\{(l_1,l_2)\in \mathbb{Z}^2:1 \leq l_k \leq B_{Tk}, k=1,2\}$, where $(B_{T1},B_{T2})=\left(\frac{1}{{h_T}_1},\frac{1}{{h_T}_2}\right) \rightarrow \infty$.
	Assume $GMC(\alpha)$, $\alpha > 0$, $V(\mathbf{j}) \in \mathbb{L}^{4+\delta}$ for some $\delta \in (0,4]$, and $|\mathbb{B}_T|=O(|T|^{\eta})$, i.e., $(|\mathbf{h}_T||T|^{\eta})^{-1}=O(1)$	
%
%
	for some $0 < \eta < \frac{\delta}{4+\delta}$ and $f_*=\min\limits_{\mathbf{\lambda} \in\mathbb{R}^2}f(\mathbf{\lambda}) > 0$. Then, 
	\begin{equation*}
	\max\limits_{\mathbf{\lambda} \in [0,2\pi]^2}\sqrt{|T||\mathbf{h}_T|}\left|\widetilde{f}_T(\mathbf{\lambda})-\mathbb{E}(\widetilde{f}_T(\mathbf{\lambda}))\right|=O_p\left((\log|T|)^{\frac{1}{2}}\right)\,.
	\end{equation*}		
\end{thm}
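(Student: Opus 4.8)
The plan is to reduce the maximum over the continuum $[0,2\pi]^2$ to a maximum over a polynomial grid of frequencies, and then to control the stochastic fluctuation at each grid point by a truncation-plus-Bernstein argument that exploits the geometric-moment contraction through the cumulant and variance estimates of Lemmas \ref{AlemC2} and \ref{AlemC3}. First I would rewrite the centered estimator as an average over base points: since $k(\cdot)$ has support $[-1,1]^2$ by Assumption \ref{assk4}, the lag sum defining $\widetilde f_T$ is restricted to $\mathbf{L}_B$, and reorganizing $R_V$ by base point gives $\widetilde f_T(\mathbf{\lambda})-\mathbb{E}(\widetilde f_T(\mathbf{\lambda}))=|T|^{-1}\sum_{\mathbf{j}\in T}\big(U_{\mathbf{j}}(\mathbf{\lambda})-\mathbb{E}(U_{\mathbf{j}}(\mathbf{\lambda}))\big)$ with $U_{\mathbf{j}}$ exactly as in Lemma \ref{AlemC3}, up to edge terms coming from base points within the bandwidth-scale $\sqrt{|\mathbb{B}_T|}$ neighbourhood of $\partial T$, which are of smaller order than $|T|$ and hence negligible. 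Applying Lemma \ref{AlemC3} with $\mathbb{S}_T=T$ gives $\big\|\sum_{\mathbf{j}\in T}(U_{\mathbf{j}}-\mathbb{E}U_{\mathbf{j}})\big\|_2^2\sim |T||\mathbb{B}_T|\sigma^2$, and since $|\mathbb{B}_T|\asymp|\mathbf{h}_T|^{-1}$ this shows that $\sqrt{|T||\mathbf{h}_T|}(\widetilde f_T-\mathbb{E}\widetilde f_T)$ has $O(1)$ variance pointwise, fixing the normalization.

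Next I would discretize: cover $[0,2\pi]^2$ by a grid $\mathcal{G}_T$ of mesh $\asymp|T|^{-a}$, so $|\mathcal{G}_T|$ is polynomial in $|T|$. Differentiating $U_{\mathbf{j}}(\mathbf{\lambda})$ in $\mathbf{\lambda}$ brings down a lag factor bounded by $\max_k B_{Tk}$, so $\mathbf{\lambda}\mapsto\widetilde f_T(\mathbf{\lambda})$ is Lipschitz with a random constant controlled by $\max_k B_{Tk}\cdot|T|^{-1}\sum_{\mathbf{j}\in T}\sum_{\mathbf{l}\in\mathbf{L}_B}|V(\mathbf{j})V(\mathbf{j}+\mathbf{l})|$, whose mean is $O(|\mathbb{B}_T|)$. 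Choosing $a$ large enough makes the oscillation of $\sqrt{|T||\mathbf{h}_T|}(\widetilde f_T-\mathbb{E}\widetilde f_T)$ between neighbouring grid points $o_p((\log|T|)^{1/2})$, so it suffices to bound the maximum over $\mathcal{G}_T$.

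For a fixed $\mathbf{\lambda}\in\mathcal{G}_T$ I would truncate the field at a level $M_T=|T|^{\beta}$, writing $V=V\mathds{1}_{\{|V|\le M_T\}}+V\mathds{1}_{\{|V|>M_T\}}$ and splitting $U_{\mathbf{j}}$ into a bounded part $U_{\mathbf{j}}^{(M)}$ and a remainder. The remainder is controlled in $\mathbb{L}^1$ through $\mathbb{E}|V(\mathbf{0})|^{4+\delta}<\infty$ via bounds of the form $\mathbb{E}(|V|\mathds{1}_{\{|V|>M_T\}})\le M_T^{-(3+\delta)}\mathbb{E}|V|^{4+\delta}$, and summing over $\mathbf{j}\in T$ and over the polynomial grid forces the admissible range of $\beta$. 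Each bounded summand satisfies $|U_{\mathbf{j}}^{(M)}|\lesssim M_T^2|\mathbb{B}_T|$, while the centered sum retains variance $\sim|T||\mathbb{B}_T|\sigma^2$ since truncation does not alter the leading order. Using the geometric-moment contraction I would pass to an $m$-dependent approximation and group the base points into alternating large and small blocks so that the large-block sums are nearly independent; Lemma \ref{AlemC2} guarantees that the block variances accumulate to the $\sigma^2$ of Lemma \ref{AlemC3} and that the coupling error is geometrically small. A Bernstein-type inequality for the independent blocks then yields a sub-exponential tail of the form $\exp\!\big(-c\,t^2/(|T||\mathbb{B}_T|)\big)$ for $t\le C\sqrt{|T||\mathbb{B}_T|\log|T|}$, i.e. a bound $|T|^{-cC^2}$ at the target scale; a union bound over the polynomially many grid points with $C$ chosen large then gives $\max_{\mathbf{\lambda}}\sqrt{|T||\mathbf{h}_T|}|\widetilde f_T(\mathbf{\lambda})-\mathbb{E}\widetilde f_T(\mathbf{\lambda})|=O_p((\log|T|)^{1/2})$.

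The main obstacle is the third step: producing a genuine exponential tail for the weakly dependent, only $\mathbb{L}^{4+\delta}$-integrable quadratic summands $U_{\mathbf{j}}$. The truncation level $M_T$ must be taken as large as possible so that discarding the tail is negligible, yet small enough that the boundedness $M_T^2|\mathbb{B}_T|$ keeps the Bernstein large-deviation term dominated by the Gaussian term $t^2/(|T||\mathbb{B}_T|)$ at $t\asymp\sqrt{|T||\mathbb{B}_T|\log|T|}$; reconciling these two requirements is precisely why the constraint $0<\eta<\delta/(4+\delta)$ arises, and the delicate bookkeeping lies in verifying that the block decomposition under GMC preserves the variance order supplied by Lemma \ref{AlemC3} while keeping the coupling error summable.
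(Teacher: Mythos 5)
Your proposal is correct in outline and shares the paper's core architecture: rewrite $\widetilde f_T-\mathbb{E}\widetilde f_T$ as $|T|^{-1}\sum_{\mathbf{j}\in T}(U_{\mathbf{j}}-\mathbb{E}U_{\mathbf{j}})$ plus negligible edge terms, pass to an $m$-dependent approximation via the GMC property, partition $T$ into blocks whose sums are (nearly) independent, truncate, and finish with Bernstein plus a union bound over a discretized frequency set, with Lemmas \ref{AlemC2} and \ref{AlemC3} supplying the cumulant and variance input. You depart from the paper in the two technically delicate steps. First, you truncate the raw field at $M_T=|T|^{\beta}$, whereas the paper truncates the \emph{block sums} $A_{\mathbf{r}}$ at $c_T=\sqrt{|T||\mathbb{B}_T|}(\log|T|)^{-1/2}$ and controls the discarded part through the $\mathbb{L}^{\tau}$ bound $\|A_{\mathbf{1}}\|_{\tau}=O(m^2\sqrt{|\mathbb{B}_T|}\,p_T)$ with $\tau=2+\delta/2$ (Lemmas \ref{AlemC5}--\ref{AlemC6}); the paper's choice makes the Bernstein denominator $c_T\beta_T\asymp|T||\mathbb{B}_T|$ automatic and pushes all the $\delta$-dependence into the choice $p_T=\lfloor|T|^{1-4\eta/\delta}(\log|T|)^{-8/\delta-4}\rfloor$, while your scheme must reconcile the truncation window $(1+\eta)/(2(2+\delta))<\beta<(1-\eta)/4$ jointly with the block size — your observation that this window is nonempty exactly when $\eta<\delta/(4+\delta)$ is the right consistency check, but you should verify the Bernstein second-moment/sup-norm trade-off at the level of block sums (bounded by $p_T^2M_T^2|\mathbb{B}_T|$), not individual summands. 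Second, you control the continuum maximum by a crude mesh-plus-random-Lipschitz argument on a polynomially fine grid, whereas the paper uses the sharper maximal inequality for trigonometric polynomials (the analogue of Corollary 2.1 of \cite{WM1967}) with only $\asymp B_{Tk}\log B_{Tk}$ grid points per coordinate; your route works because the Lipschitz constant is at worst polynomial in $|T|$ and the per-point tail is $|T|^{-cC^2}$, but it inflates the union bound and so demands a larger constant $C$ in $t=C\sqrt{|T||\mathbb{B}_T|\log|T|}$. Neither difference is a gap; both versions deliver the stated $O_p((\log|T|)^{1/2})$ rate.
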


\begin{rem}\label{AremC2}
	If $GMC(\alpha)$, $\alpha > 0$ and $V(\mathbf{j}) \in \mathbb{L}^{4+\delta}$, then $V(\mathbf{j})$ satisfies GMC(4), see Lemma 2 of \cite{wu2005linear}. 
\end{rem}

\begin{proof}[Proof of Theorem \ref{AthmC4}]
	Let
	\begin{equation*}
	\begin{aligned}
	U_\mathbf{j}=U_\mathbf{j}(\mathbf{\lambda})&=\frac{1}{4\pi^2}\sum\limits_{\mathbf{l}\in \mathbf{L}_B}V(\mathbf{j})V(\mathbf{j}+\mathbf{l})k(l_1 {h_T}_1,l_2 {h_T}_2)\cos(\mathbf{l}'\mathbf{\lambda})\\
	&=\frac{1}{4\pi^2}\sum\limits_{\mathbf{l} \in \mathbf{L}_B}V(\mathbf{j})V(\mathbf{j}+\mathbf{l})\alpha_\mathbf{l}\,,
	\end{aligned}
	\end{equation*} 
	where $\mathbf{L}_B = \{(l_1,l_2)\in \mathbb{Z}^2: (|l_1|,|l_2|) \in {\mathbb{B}_T}\}$ and  $\alpha_\mathbf{l}=k(l_1 {h_T}_1,l_2 {h_T}_2)\cos(\mathbf{l}'\mathbf{\lambda})$. Note that 
	\begin{equation*}
	\begin{aligned}
	\widetilde{f}_T(\mathbf{\lambda}) &=\frac{1}{4\pi^2}\sum\limits_{\mathbf{l}\in \mathbf{L}_B}\left[\frac{1}{|T|}\sum\limits_{\mathbf{j},\mathbf{j}+\mathbf{l} \in T}(V(\mathbf{j})V(\mathbf{j}+\mathbf{l})\alpha_\mathbf{l})\right]
	=\frac{1}{|T|}\sum\limits_{\mathbf{j} \in T} U_\mathbf{j}-\frac{1}{|T|}\left(\sqrt{|T||\mathbb{B}_T|}q_T(\lambda)\right)\,,
	\end{aligned}
	\end{equation*}
	where  
	\begin{equation*}
	q_T(\mathbf{\lambda})=\frac{1}{4\pi^2\sqrt{|T||\mathbb{B}_T|}}\left(\sum\limits_{\mathbf{l}\in \mathbf{L}_B} \sum\limits_{\substack{\mathbf{j}: \mathbf{j} \in T, \mathbf{j}+\mathbf{l} \notin T}}V(\mathbf{j})V(\mathbf{j}+\mathbf{l})\alpha_\mathbf{l}\right) \,.
	\end{equation*}  
	By the summability of cumulants of order 2 and 4, we have
	\begin{equation*}
	\begin{aligned}
	\|q_T(\mathbf{\lambda})\|&=(|T||\mathbb{B}_T|)^{-\frac{1}{2}}O(|\mathbb{B}_T|) =O\left(\sqrt{\frac{|\mathbb{B}_T|}{|T|}}\right) =O\left(\sqrt{\frac{|T|^{\eta}}{|T|}}\right)=o(1).
	\end{aligned}
	\end{equation*}
	Denote $g_T=g_T(\mathbf{\lambda})=\sum\limits_{\mathbf{j} \in T}U_\mathbf{j}$. We have
	\begin{equation}\label{AeqC1}
	\sqrt{|T||\mathbf{h}_T|}\big(\widetilde{f}_T(\mathbf{\lambda})-\mathbb{E}(\widetilde{f}_T(\mathbf{\lambda}))\big)=\frac{g_T-\mathbb{E}(g_T)}{\sqrt{|T||\mathbb{B}_T|}}-q_T(\lambda)+\mathbb{E}(q_T(\lambda)) \,.
	\end{equation}
	Let $\rho=\rho(4)<1$ as in GMC(4). For $\mathbf{l} \in \mathbb{Z}^2$, define $\widehat{V}(\mathbf{l})=\mathbb{E}(V(\mathbf{l})|\mathcal{F}_{\mathbf{l},<m>})$, where $\mathcal{F}_{\mathbf{l},<m>}$ is defined the same as in the proof of Theorem \ref{thm21} with  
	$m=m_T=\left\lfloor-\frac{8\log|T|}{\log \rho}\right\rfloor \in \mathbb{N}$.	Let $\widehat{U}_\mathbf{j}=\widehat{U}_\mathbf{j}(\mathbf{\lambda})$ be the corresponding sum with $V(\mathbf{l})$ replaced by $\widehat{V}(\mathbf{l})$. Observe that $\widehat{V}(\mathbf{i})$ and $\widehat{V}(\mathbf{j})$ are i.i.d. if $\|\mathbf{i}-\mathbf{j}\| \geq 2m$ and $\widehat{U}_\mathbf{i}$ and $\widehat{U}_\mathbf{j}$ are i.i.d. if $\|\mathbf{i}-\mathbf{j}\| \geq 2 \max \{B_{T1},B_{T2}\}+2m$. Define $\widehat{g}_T=\widehat{g}_T(\mathbf{\lambda})=\sum\limits_{\mathbf{j} \in T}\widehat{U}_\mathbf{j}(\mathbf{\lambda})$. Then,  $\|g_T-\widehat{g}_T\|_2=o(1)$ since
	\begin{equation}\label{AeqC2}
	\begin{aligned}
	\|U_\mathbf{j}-\widehat{U}_\mathbf{j}\|_2 &\leq \frac{1}{4\pi^2}\sum\limits_{\mathbf{l}\in \mathbf{L}_B}\|V(\mathbf{j})V(\mathbf{j}+\mathbf{l})-\widehat{V}(\mathbf{j})\widehat{V}(\mathbf{j}+\mathbf{l})\|_2|\alpha_\mathbf{l}|\\
	&=O(|\mathbb{B}_T|)O\left(\|V(\mathbf{j})V(\mathbf{j}+\mathbf{l})-\widehat{V}(\mathbf{j})\widehat{V}(\mathbf{j}+\mathbf{l})\|_2\right) 
	= O\left(|\mathbb{B}_T|\rho^{\frac{m}{4}}\right) \,.
	\end{aligned}
	\end{equation}
	Therefore, we have
	\begin{equation*}
	\begin{aligned}
	\|g_T-\widehat{g}_T\|_2 & \leq \sum\limits_{\mathbf{j} \in T}\|U_\mathbf{j}-\widehat{U}_\mathbf{j}\|_2=O\left(|T||\mathbb{B}_T|\rho^{\frac{m}{4}}\right)
	= O\left(|T||\mathbb{B}_T|\frac{1}{|T|^2}\right) = o(1)\,.
	\end{aligned}
	\end{equation*}
	
	We now define four sets of blocks such that the blocks within the same set are i.i.d.. Let $p_T=\left\lfloor |T|^{1-\frac{4\eta}{\delta}}(\log|T|)^{-\frac{8}{\delta}-4}\right\rfloor$. We have $p_T \rightarrow \infty,~|\mathbb{B}_T|=o(p_T)$, and 
	\begin{equation*}
	k_T=\left\{(n_1,n_2):1 \leq n_1 \leq \left\lfloor \frac{d_1}{p_T} \right\rfloor, 1 \leq n_2 \leq \left\lfloor \frac{d_2}{p_T} \right\rfloor\right\}\,.
	\end{equation*} 
	Define 
	\begin{equation*}
	\begin{aligned}
	k_{T_{\mathcal{A}}} &=\{(n_1,n_2) \in k_T: n_1=2z_1+1, n_2=2z_2+1\quad \exists \, z_1,z_2 \in\mathbb{N}\} \,,\\
	k_{T_{\mathcal{B}}} &=\{(n_1,n_2) \in k_T: n_1=2z_1, n_2=2z_2+1\quad \exists  \, z_1,z_2 \in\mathbb{N}\} \,,\\
	k_{T_{\mathcal{C}}} &=\{(n_1,n_2) \in k_T: n_1=2z_1+1, n_2=2z_2\quad \exists \, z_1,z_2 \in\mathbb{N}\} \,,\\
	k_{T_{\mathcal{D}}} &=\{(n_1,n_2) \in k_T: n_1=2z_1, n_2=2z_2\quad \exists \, z_1,z_2 \in\mathbb{N}\} \,.
	\end{aligned}
	\end{equation*}
	Then, define the blocks 
	\begin{equation*}
	\mathcal{A}_\mathbf{r}=\{(j_1,j_2) \in\mathbb{N}^2: (r_1-1)p_T+1 \leq j_1 \leq r_1p_T,~(r_2-1)p_T+1 \leq j_2 \leq r_2p_T\}\,,
	\end{equation*}
	for $\mathbf{r} = (r_1,r_2) \in k_{T_{\mathcal{A}}}$
	and $\mathcal{B}_\mathbf{r},\, \mathcal{C}_\mathbf{r}$ and $\mathcal{D}_\mathbf{r}$ are defined similarly.
	Let
	\begin{equation*}
	A_\mathbf{r}=\sum\limits_{\mathbf{j} \in {\mathcal{A}_\mathbf{r}}}\widehat{U}(\mathbf{j}), B_\mathbf{r}=\sum\limits_{\mathbf{j} \in {\mathcal{B}_\mathbf{r}}}\widehat{U}(\mathbf{j}), C_\mathbf{r}=\sum\limits_{\mathbf{j} \in {\mathcal{C}_\mathbf{r}}}\widehat{U}(\mathbf{j}), D_\mathbf{r}=\sum\limits_{\mathbf{j} \in {\mathcal{D}_\mathbf{r}}}\widehat{U}(\mathbf{j})\,.
	\end{equation*}
	Observe that $\{A_\mathbf{r}\}_{\mathbf{r} \in k_{T_\mathcal{A}}}$ are i.i.d. Similarly, $\{B_\mathbf{r}\}_{\mathbf{r} \in k_{T_\mathcal{B}}}$, $\{C_\mathbf{r}\}_{\mathbf{r} \in k_{T_\mathcal{C}}}$ and $\{D_\mathbf{r}\}_{\mathbf{r} \in k_{T_\mathcal{D}}}$ are i.i.d. Also, define the remaining part $\mathcal{R}_T:=T \backslash(k_{T_\mathcal{A}} \bigcup k_{T_\mathcal{B}} \bigcup k_{T_\mathcal{C}} \bigcup k_{T_\mathcal{D}})$ and $R=R(\lambda)=\sum\limits_{\mathbf{j} \in \mathcal{R}_T} \widehat{U}_\mathbf{j}(\mathbf{\lambda})$.
	
	The following two lemmas state the order of the asymptotic behavior of $A_\mathbf{r}$, $B_\mathbf{r}$, $C_\mathbf{r}$, $D_\mathbf{r}$ and $R$ respectively.
	
	\begin{lem}\label{AlemC5}
		Let $\tau=2+\delta/2$. We have ${\max\limits_{\mathbf{\lambda} \in [0,2\pi]^2}}\|A_\mathbf{1}(\mathbf{\lambda})\|_{\tau}=O\left(m^2\sqrt{|\mathbb{B}_T|}p_T\right)\,.$
	\end{lem}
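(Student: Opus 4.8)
The plan is to bound the $\tau$th moment of the \emph{centered} block sum, since it is $\widehat{g}_T-\mathbb{E}(\widehat{g}_T)=\sum_{\mathbf{j}}(\widehat{U}_{\mathbf{j}}-\mathbb{E}\widehat{U}_{\mathbf{j}})$ that enters \eqref{AeqC1}; accordingly I replace $\widehat{U}_{\mathbf{j}}$ by $\widehat{U}_{\mathbf{j}}-\mathbb{E}(\widehat{U}_{\mathbf{j}})$ throughout. Two separate scales of dependence are in play. The coupling truncation makes $\widehat{V}(\mathbf{k})=\mathbb{E}(V(\mathbf{k})\mid\mathcal{F}_{\mathbf{k},<m>})$ a function of only the $O(m^2)$ innovations $\{\varepsilon_{\mathbf{s}}:\|\mathbf{s}-\mathbf{k}\|<m\}$, so $\{\widehat{V}(\mathbf{k})\}$ is $2m$-dependent; on the other hand $\widehat{U}_{\mathbf{j}}=(4\pi^2)^{-1}\widehat{V}(\mathbf{j})\sum_{\mathbf{l}\in\mathbf{L}_B}\alpha_{\mathbf{l}}\widehat{V}(\mathbf{j}+\mathbf{l})$ ties $\mathbf{j}$ to the whole neighbourhood $\mathbf{j}+\mathbf{L}_B$, whose radius is $\asymp\max\{B_{T1},B_{T2}\}\asymp|\mathbb{B}_T|^{1/2}$. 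The target $m^2|\mathbb{B}_T|^{1/2}p_T$ should be read as (polynomial in $m$)$\times$($L^2$-order), where the $L^2$-order $|\mathbb{B}_T|^{1/2}p_T$ reflects orthogonality in $\mathbf{l}$ together with the $p_T^2$ spatial sites, and the $m^2$ is the price for passing from the second moment to the $\tau$th with $\tau=(4+\delta)/2\in(2,4]$.

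First I would record the crude per-site bound $\|\widehat{U}_{\mathbf{j}}-\mathbb{E}\widehat{U}_{\mathbf{j}}\|_{\tau}\le C|\mathbb{B}_T|$, which follows from $\|\widehat{V}(\mathbf{k})\|_{2\tau}\le\|V(\mathbf{0})\|_{2\tau}<\infty$ (here $2\tau=4+\delta$ is exactly the available moment), $|\alpha_{\mathbf{l}}|\le\|k\|_{\infty}$, and $|\mathbf{L}_B|\asymp|\mathbb{B}_T|$. Next, the second-moment order is supplied by the cumulant machinery: Lemma \ref{AlemC2} gives summability of the fourth-order joint cumulants under GMC($4$), and the computation behind Lemma \ref{AlemC3} then yields $\big\|\sum_{\mathbf{j}\in\mathcal{A}_{\mathbf{1}}}(\widehat{U}_{\mathbf{j}}-\mathbb{E}\widehat{U}_{\mathbf{j}})\big\|_2^2\asymp|\mathcal{A}_{\mathbf{1}}||\mathbb{B}_T|=p_T^2|\mathbb{B}_T|$, i.e. the $L^2$-order $p_T|\mathbb{B}_T|^{1/2}$ (the replacement of $U_{\mathbf{j}}$ by $\widehat{U}_{\mathbf{j}}$ is absorbed since $\|U_{\mathbf{j}}-\widehat{U}_{\mathbf{j}}\|_2$ is exponentially small in $m$, as in \eqref{AeqC2}).

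To upgrade from $L^2$ to $L^{\tau}$ I would use the martingale-difference decomposition along the innovation field, with the projections $\mathcal{P}_{\mathbf{i}}$ introduced in the proof of Theorem \ref{thm21}, and Burkholder's inequality, which for $\tau\ge2$ gives $\|A_{\mathbf{1}}-\mathbb{E}A_{\mathbf{1}}\|_{\tau}\le C\big(\sum_{\mathbf{i}}\|\mathcal{P}_{\mathbf{i}}A_{\mathbf{1}}\|_{\tau}^2\big)^{1/2}$, the sum ranging over the $\asymp p_T^2$ innovations that influence the block. For a fixed $\mathbf{i}$ only $\asymp m^2$ sites $\mathbf{j}$ have $\varepsilon_{\mathbf{i}}$ hitting the leading factor $\widehat{V}(\mathbf{j})$, and the accompanying factor $\sum_{\mathbf{l}}\alpha_{\mathbf{l}}\widehat{V}(\mathbf{j}+\mathbf{l})$ is a weighted sum of a $2m$-dependent, mean-zero field whose $L^{2\tau}$-norm I would control by a two-dimensional Rosenthal argument: sub-sampling $\mathbf{L}_B$ into $\asymp m^2$ sublattices of spacing $2m$ on which the summands are independent, and invoking the independent-case Rosenthal inequality whose variance proxy $(\sum_{\mathbf{l}}\alpha_{\mathbf{l}}^2)^{1/2}\asymp|\mathbb{B}_T|^{1/2}$ is what preserves the square root in $|\mathbb{B}_T|$. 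Collecting the $O(m^2)$ per-site factor and the $O(|\mathbb{B}_T|^{1/2})$ factor, summing the $\asymp p_T^2$ contributions and taking the square root should produce the claimed $\|A_{\mathbf{1}}-\mathbb{E}A_{\mathbf{1}}\|_{\tau}\le Cm^2|\mathbb{B}_T|^{1/2}p_T$.

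The hard part is precisely to obtain $|\mathbb{B}_T|^{1/2}$ rather than $|\mathbb{B}_T|$ at the level of the $\tau$th moment. Since the effective dependence range of $\widehat{U}_{\mathbf{j}}$ is $\asymp|\mathbb{B}_T|^{1/2}$, treating $\{\widehat{U}_{\mathbf{j}}\}$ as a black-box $\kappa$-dependent field and sub-sampling at the $\widehat{U}$-level forces the prefactor $\kappa^2\asymp|\mathbb{B}_T|$, and any use of the triangle inequality across $\mathbf{l}$ likewise destroys the orthogonality and turns $|\mathbb{B}_T|^{1/2}$ into $|\mathbb{B}_T|$. The argument must therefore keep the lag sum intact as a coefficient vector and route the $\mathbf{l}$-orthogonality through the variance-proxy term of Rosenthal/Burkholder, paying only the polynomial-in-$m$ price that comes from the genuine short range $m$ of the coupled innovations. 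The constraints $\tau\le4$ and $2\tau=4+\delta$ are exactly what make the fourth-cumulant summability of Lemma \ref{AlemC2} sufficient for this step, so that no moment beyond $\mathbb{L}^{4+\delta}$ of $V$ is needed.
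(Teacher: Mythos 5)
Your route is genuinely different from the paper's. The paper never projects onto individual innovations: it splits the lag set $\mathbf{L}_B$ into ``far'' lags ($|l_1|>2m$ or $|l_2|>2m$), where $\widehat{V}(\mathbf{j})$ and $\widehat{V}(\mathbf{j}+\mathbf{l})$ are independent, and ``short'' lags ($|l_1|,|l_2|\le 2m$), of which there are only $O(m^2)$; in each regime it subsamples the index set into $O(m^2)$ sublattices of spacing $2m$ (resp.\ $6m$) on which the summands are independent, and applies the independent-case moment inequality so that the $\ell^2$-norm of the coefficient vector $\{\alpha_\mathbf{l}\}$ delivers the $\sqrt{|\mathbb{B}_T|}$. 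Your Burkholder-via-$\mathcal{P}_\mathbf{i}$ architecture is a legitimate alternative (the multi-index Burkholder inequality for commuting filtrations is available in the framework of \cite{EM2013} that the paper already invokes), and your diagnosis of where the $\sqrt{|\mathbb{B}_T|}$ must come from --- keeping the lag sum intact as a coefficient vector rather than using the triangle inequality across $\mathbf{l}$ --- is exactly right and matches the mechanism in the paper's far-lag bound.

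There is, however, a concrete gap in your accounting of $\mathcal{P}_\mathbf{i}A_\mathbf{1}$. You bound only the contributions from the $\asymp m^2$ sites $\mathbf{j}$ for which $\varepsilon_\mathbf{i}$ enters the \emph{first} factor $\widehat{V}(\mathbf{j})$ of $\widehat{U}_\mathbf{j}$. But $\varepsilon_\mathbf{i}$ also enters $\widehat{U}_\mathbf{j}$ through the \emph{second} factor $\widehat{V}(\mathbf{j}+\mathbf{l})$, and this happens for every $\mathbf{j}$ in the set $\mathbf{i}-\mathbf{L}_B+\{\mathbf{s}:\|\mathbf{s}\|<m\}$, whose cardinality is of order $|\mathbb{B}_T|$, not $m^2$; $\mathcal{P}_\mathbf{i}\widehat{U}_\mathbf{j}\ne 0$ for all such $\mathbf{j}$. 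These terms are not covered by your per-site estimate, and a naive triangle inequality over them gives $O(|\mathbb{B}_T|\,\mathrm{poly}(m))$ per projection, which destroys the square root. To rescue the argument you would need a second orthogonality step with the roles of $\mathbf{j}$ and $\mathbf{l}$ interchanged: for fixed $\mathbf{i}$, only $O(m^2)$ lags $\mathbf{l}$ contribute for each such $\mathbf{j}$, and the leading factors $\widehat{V}(\mathbf{j})$ over the $\asymp|\mathbb{B}_T|$ relevant sites $\mathbf{j}$ are $2m$-dependent, so another sublattice/Rosenthal argument in $\mathbf{j}$ can recover a $\mathrm{poly}(m)\sqrt{|\mathbb{B}_T|}$ bound for this piece as well. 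This is fixable but it is the harder half of the projection bound, and as written the proposal omits it. (A further, minor point: your intermediate factors appear to multiply out to $m^3\sqrt{|\mathbb{B}_T|}\,p_T$ rather than $m^2\sqrt{|\mathbb{B}_T|}\,p_T$; since $m\asymp\log|T|$ and the definition of $p_T$ carries spare logarithmic factors, this would not affect the downstream use in Lemma \ref{AlemC6}, but it should be stated rather than silently absorbed.)
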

	\begin{proof}[Proof of Lemma \ref{AlemC5}]
		By the definition of $A_\mathbf{1}(\mathbf{\lambda})$,
		\begin{equation*}
		\begin{aligned}
		\|A_\mathbf{1}(\mathbf{\lambda})\|_{\tau} &=\left\|\sum\limits_{\mathbf{j} \in {\mathcal{A}_\mathbf{1}}}\widehat{U}(\mathbf{j})\right\|_{\tau}  =\frac{1}{4\pi^2}\left\|\sum\limits_{\mathbf{j} \in {\mathcal{A}_\mathbf{1}}}\sum\limits_{\mathbf{l}\in \mathbf{L}_B}\widehat{V}(\mathbf{j})\widehat{V}(\mathbf{j}+\mathbf{l})\alpha_\mathbf{l}\right\|_{\tau}\\
		&\leq \frac{1}{4\pi^2}\left\|\sum\limits_{\mathbf{j} \in {\mathcal{A}_\mathbf{1}}}\sum\limits_{\substack{\mathbf{l} \in {\mathbf{L}_f}}}\widehat{V}(\mathbf{j})\widehat{V}(\mathbf{j}+\mathbf{l})\alpha_\mathbf{l}\right\|_{\tau} + \frac{1}{4\pi^2}\left\|\sum\limits_{\mathbf{j} \in {\mathcal{A}_\mathbf{1}}}\sum\limits_{\substack{\mathbf{l} \in {\mathbf{L}_s}}}\widehat{V}(\mathbf{j})\widehat{V}(\mathbf{j}+\mathbf{l})\alpha_\mathbf{l}\right\|_{\tau}\,,
		\end{aligned}
		\end{equation*}
		where $\mathbf{L}_f = \{(l_1,l_2)\in \mathbb{Z}^2: (|l_1|,|l_2|) \in {\mathbb{B}_T}, |l_1|> 2m~\text{or}~|l_2| > 2m \},\ \mathbf{L}_s = \{(l_1,l_2)\in \mathbb{Z}^2: (|l_1|,|l_2|) \in {\mathbb{B}_T}, |l_1|\leq 2m,|l_2| \leq 2m \}$.
		For the first summation, we have 
		\begin{align*}
	 \left\|\sum\limits_{\mathbf{j} \in {\mathcal{A}_\mathbf{1}}}\sum\limits_{\substack{\mathbf{l} \in {\mathbf{L}_f}}}\widehat{V}(\mathbf{j})\widehat{V}(\mathbf{j}+\mathbf{l})\alpha_\mathbf{l}\right\|_{\tau} 
		\leq\ & \sum\limits_{\substack{\mathbf{h} \in \mathbf{H}_f }}\left\|\sum\limits_{\substack{\mathbf{j} \in \mathbf{J}_f}}\sum\limits_{\substack{\mathbf{l} \in {\mathbf{L}_f}}}\widehat{V}(\mathbf{h}+2m\mathbf{j})\widehat{V}(\mathbf{h}+2m\mathbf{j}+\mathbf{l})\alpha_\mathbf{l}\right\|_{\tau}\\
		\leq\ & O\left(m^2\right)\frac{p_T}{m}\left\|\sum\limits_{\substack{\mathbf{l} \in {\mathbf{L}_f}}}\widehat{V}(\mathbf{l})\alpha_\mathbf{l}\right\|_{\tau}\\
		\leq\ & O(p_Tm)\sum\limits_{\substack{\mathbf{h} \in \mathbf{H}_{f} }}\left\|\sum\limits_{\substack{\mathbf{j} \in \mathbf{J}_{B_T} }}\widehat{V}(\mathbf{h}+2m\mathbf{j})\alpha_{\mathbf{h}+2m\mathbf{j}}\right\|_{\tau} \displaybreak[3]\\
		=\ & O(p_Tm)O\left(m^2\sqrt{\frac{|\mathbb{B}_T|}{m^2}}\right) =  O\left(p_T\sqrt{|\mathbb{B}_T|}m^2\right) \,,
		\end{align*}
		where $\mathbf{H}_f = \{(h_1,h_2)\in \mathbb{Z}^2 : 1 \leq h_1 \leq 2m, 1 \leq h_2 \leq 2m \},\ \mathbf{J}_f = \{(j_1,j_2)\in \mathbb{Z}^2 : |j_1| \leq \left\lfloor \frac{p_T-h_1}{2m} \right\rfloor, |j_2| \leq \left\lfloor \frac{p_T-h_2}{2m} \right\rfloor \}$, and $\mathbf{J}_{B_T} = \{(j_1,j_2)\in \mathbb{Z}^2 : |j_1| \leq \left\lfloor \frac{B_{T1}-h_1}{2m} \right\rfloor, |j_2| \leq \left\lfloor \frac{B_{T2}-h_2}{2m} \right\rfloor, (|j_1|,|j_2|) \neq (0,0) , (1,1) \}$.
		
		On the other hand, for the second summation, we have
		\begin{equation*}
		\begin{aligned}
		 \left\|\sum\limits_{\mathbf{j} \in {\mathcal{A}_\mathbf{1}}}\sum\limits_{\substack{\mathbf{l} \in \mathbf{L}_s}}\widehat{V}(\mathbf{j})\widehat{V}(\mathbf{j}+\mathbf{l})\alpha_\mathbf{l}\right\|_{\tau}
		\leq &\sum\limits_{\substack{\mathbf{l} \in \mathbf{L}_s}}\sum\limits_{\substack{\mathbf{h} \in \mathbf{H}_s }}\left\|\sum\limits_{\substack{\mathbf{j} \in \mathbf{J}_s }} \widehat{V}\left(\mathbf{h}+\mathbf{j}'(6m,6m)+\mathbf{l}\right)\widehat{V}\left(\mathbf{h}+\mathbf{j}'(6m,6m)\right)\alpha_\mathbf{l}\right\|_{\tau}\\
		=\ & O\left(m^4\sqrt{\frac{{p_T}^2}{m^2}}\right) =  O\left(m^3p_T\right) \,,
		\end{aligned}
		\end{equation*}
		where $\mathbf{H}_s = \{(h_1,h_2)\in \mathbb{Z}^2 : h_1 \leq 6m, h_2 \leq 6m \},\ \mathbf{J}_s = \{(j_1,j_2)\in \mathbb{Z}^2 : |j_1| \leq \left\lfloor \frac{p_T-h_1}{6m} \right\rfloor, |j_2| \leq \left\lfloor \frac{p_T-h_2}{6m} \right\rfloor \}$.
		\\Then, we have $\|A_\mathbf{1}(\mathbf{\lambda})\|_{\tau} = O\left(p_T\sqrt{|\mathbb{B}_T|}m^2\right)+O\left(m^3p_T\right) =O\left(p_T\sqrt{|\mathbb{B}_T|}m^2\right)$. 
        Thus, it holds uniformly over $\mathbf{\lambda} \in [0,2\pi]^2$ that 
		${\max\limits_{\mathbf{\lambda} \in [0,2\pi]^2}}\|A_\mathbf{1}(\mathbf{\lambda})\|_{\tau}=O\left(m^2\sqrt{|\mathbb{B}_T|}p_T\right)\,.$
	\end{proof}
	
	Next, define $c_T = \sqrt{|T||\mathbb{B}_T|}(\log|T|)^{-\frac{1}{2}}$ and the truncated version
	$A^{*}_\mathbf{r}(\lambda)=A_\mathbf{r}(\lambda)\mathds{1}_{\left\{|A_\mathbf{r}(\mathbf{\lambda})| \leq c_T\right\}}$. 
	Also, we define $B^{*}_\mathbf{r}(\mathbf{\lambda})$, $C^{*}_\mathbf{r}(\mathbf{\lambda})$, $D^{*}_\mathbf{r}(\mathbf{\lambda})$ similarly. The following lemma investigates the asymptotic behavior of each term.
	
	\begin{lem}\label{AlemC6}
		Under the Assumptions in Theorem \ref{AthmC4}, we have 
		\begin{eqnarray}
		\label{AeqC3} \mathbb{E}\left(\max_{\mathbf{\lambda} \in [0,2\pi]^2}|R(\mathbf{\lambda})|\right) &=&O\left(\sqrt{p_T(d_1+d_2-p_T)}m|\mathbb{B}_T|\right) \,,\\ 
		\label{AeqC4} \mathbb{E}\left(\max_{\mathbf{\lambda} \in [0,2\pi]^2}|q_T(\lambda)|\right) &=& o(1) \,,\\
		\label{AeqC5} \max_{\mathbf{\lambda} \in [0,2\pi]^2}\text{Var}(A_\mathbf{1}(\mathbf{\lambda})) &=& O\left(p_T^2|\mathbb{B}_T|\right) \,,\\
		\label{AeqC6} \text{Var}(A^{*}_\mathbf{1}(\lambda)) &=& \text{Var}(A_\mathbf{1}(\mathbf{\lambda}))[1+o(1)] \,,
		\end{eqnarray}
		uniformly on $\lambda \in [0,2\pi]^2$.
	\end{lem}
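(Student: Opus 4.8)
My plan is to establish the four estimates separately, using two devices repeatedly. First, since $|\alpha_{\mathbf l}|=|k(l_1{h_T}_1,l_2{h_T}_2)\cos(\mathbf l'\mathbf\lambda)|\le\sup|k|$ uniformly in $\mathbf\lambda$, every quantity here is a trigonometric polynomial in $\mathbf\lambda$ of coordinatewise degree $O(\max_k B_{Tk})$, so a supremum over $\mathbf\lambda\in[0,2\pi]^2$ can be passed through the finite sum over $\mathbf l\in\mathbf L_B$ or, where that is too lossy, controlled on a net together with a Lipschitz/Bernstein modulus. Second, because only the centered statistic $g_T-\mathbb E(g_T)$ enters \eqref{AeqC1}, it suffices to bound the centered versions of $R$ and $q_T$; the expectations contribute only to $\mathbb E(g_T)$ and cancel.

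For \eqref{AeqC5} I would invoke Lemma \ref{AlemC3} with $\mathbb S_T=\mathcal A_{\mathbf 1}$, a $p_T\times p_T$ box satisfying $|\mathbb B_T|=o(p_T^2)$; it gives $\mathrm{Var}(\sum_{\mathbf j\in\mathcal A_{\mathbf 1}}U_{\mathbf j})\sim p_T^2|\mathbb B_T|\sigma^2$ with $\sup_{\mathbf\lambda}\sigma^2<\infty$ by the boundedness of $f$ and $k$, and the replacement of $U_{\mathbf j}$ by $\widehat U_{\mathbf j}$ is harmless because $\|U_{\mathbf j}-\widehat U_{\mathbf j}\|_2=O(|\mathbb B_T|\rho^{m/4})$ as in \eqref{AeqC2}; this yields both $\max_{\mathbf\lambda}\mathrm{Var}(A_{\mathbf 1})=O(p_T^2|\mathbb B_T|)$ and the matching lower bound $\mathrm{Var}(A_{\mathbf 1})\asymp p_T^2|\mathbb B_T|$ needed later. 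For \eqref{AeqC3} I would write $R(\mathbf\lambda)-\mathbb E R(\mathbf\lambda)=\tfrac1{4\pi^2}\sum_{\mathbf l\in\mathbf L_B}\big(Z_{\mathbf l}-\mathbb E Z_{\mathbf l}\big)\alpha_{\mathbf l}$ with $Z_{\mathbf l}=\sum_{\mathbf j\in\mathcal R_T}\widehat V(\mathbf j)\widehat V(\mathbf j+\mathbf l)$, so that $\max_{\mathbf\lambda}|R-\mathbb E R|\le\tfrac{\sup|k|}{4\pi^2}\sum_{\mathbf l\in\mathbf L_B}|Z_{\mathbf l}-\mathbb E Z_{\mathbf l}|$. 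Since $\mathcal R_T$ is the union of two leftover border strips, $|\mathcal R_T|=O(p_T(d_1+d_2-p_T))$, and since $\widehat V$ is $m$-dependent each centered lag-sum obeys $\|Z_{\mathbf l}-\mathbb E Z_{\mathbf l}\|_2=O(m\sqrt{|\mathcal R_T|})$ (only $O(m^2)$ covariances per site are nonzero); summing over the $O(|\mathbb B_T|)$ lags $\mathbf l$ gives exactly $O(m|\mathbb B_T|\sqrt{p_T(d_1+d_2-p_T)})$. The estimate \eqref{AeqC4} is of the same type for the incomplete-overlap sums $\widetilde Z_{\mathbf l}=\sum_{\mathbf j\in T,\ \mathbf j+\mathbf l\notin T}V(\mathbf j)V(\mathbf j+\mathbf l)$; here it is cleanest to reuse the fixed-$\mathbf\lambda$ bound $\|q_T(\mathbf\lambda)\|_2=O(\sqrt{|\mathbb B_T|/|T|})$ already derived in the body and upgrade it to $\mathbb E\max_{\mathbf\lambda}$ via the net/Lipschitz argument, the resulting bound being $o(1)$ because $\eta<1/2$ leaves ample room.

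The delicate estimate is \eqref{AeqC6}. Writing $A_{\mathbf 1}-A_{\mathbf 1}^*=A_{\mathbf 1}\mathds 1_{\{|A_{\mathbf 1}|>c_T\}}$ and using $\tau-2=\delta/2>0$, I would bound the truncated second moment by H\"older/Markov as $\mathbb E\big(A_{\mathbf 1}^2\mathds 1_{\{|A_{\mathbf 1}|>c_T\}}\big)\le c_T^{-\delta/2}\,\|A_{\mathbf 1}\|_\tau^\tau$ and insert the sharp block bound $\|A_{\mathbf 1}\|_\tau=O(m^2\sqrt{|\mathbb B_T|}\,p_T)$ from Lemma \ref{AlemC5}. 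Substituting $c_T=\sqrt{|T||\mathbb B_T|/\log|T|}$ and $m=m_T=O(\log|T|)$, all powers of $|\mathbb B_T|$ cancel and the ratio $\mathbb E(A_{\mathbf 1}^2\mathds 1)/\mathrm{Var}(A_{\mathbf 1})$ reduces to $|T|^{\delta/4-2\eta}$ times a power of $\log|T|$. Since the hypothesis $|\mathbb B_T|=O(|T|^\eta)$ persists when $\eta$ is increased, I may take $\eta$ as close as I wish to $\delta/(4+\delta)$, for which $\delta/4-2\eta=\delta(\delta-4)/\big(4(4+\delta)\big)<0$ whenever $\delta<4$, so the ratio is $o(1)$; the conclusion then follows from $|\mathrm{Var}(A_{\mathbf 1})-\mathrm{Var}(A_{\mathbf 1}^*)|\le 2\|A_{\mathbf 1}\|_2\|A_{\mathbf 1}-A_{\mathbf 1}^*\|_2+\|A_{\mathbf 1}-A_{\mathbf 1}^*\|_2^2$ and $\mathrm{Var}(A_{\mathbf 1})\asymp p_T^2|\mathbb B_T|$.

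The main obstacle is precisely this rate balance in \eqref{AeqC6}: the truncation level $c_T$ lies only a $\sqrt{\log|T|}$ factor below $\sqrt{|T||\mathbb B_T|}$ while a single block has $\|A_{\mathbf 1}\|_2\asymp p_T\sqrt{|\mathbb B_T|}$, so negligibility of the truncation forces $p_T\ll|T|^{1/2}$, attainable only after pushing $\eta$ toward its upper limit $\delta/(4+\delta)$ and checking that the accumulated $(\log|T|)$-powers from $m_T^{2\tau}$, from $c_T$, and from $p_T$ do not reverse the strictly negative polynomial exponent. The remaining ingredients — the cumulant summability of Lemmas \ref{AlemC2} and \ref{AlemC3}, the $m$-dependence bookkeeping, and the $U$-versus-$\widehat U$ replacement — are routine by comparison.
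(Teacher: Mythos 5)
Your proposal matches the paper's proof in all essentials: the same $m$-dependence and $L^2$ bookkeeping over the border set $\mathcal{R}_T$ and over the $O(|\mathbb{B}_T|)$ incomplete-overlap lags for \eqref{AeqC3}--\eqref{AeqC4}, the same covariance-plus-fourth-cumulant expansion for \eqref{AeqC5} (the paper redoes it inline as $I_1+I_2+I_3$ rather than citing Lemma \ref{AlemC3}, but it is the identical computation, and your $U$-versus-$\widehat{U}$ replacement at cost $O(p_T^2|\mathbb{B}_T|\rho^{m/4})$ is harmless as you say), and the identical H\"older--Markov truncation bound $\nu\le\|A_{\mathbf 1}\|_\tau^\tau/c_T^{\tau-2}$ via Lemma \ref{AlemC5} together with the decomposition $\mathrm{Var}(A_{\mathbf 1}^*)=\mathrm{Var}(A_{\mathbf 1})-\nu+2c$ for \eqref{AeqC6}. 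The one place you go beyond the paper is worth retaining: your explicit rate check for \eqref{AeqC6} correctly shows the ratio $\nu/\mathrm{Var}(A_{\mathbf 1})$ is $|T|^{\delta/4-2\eta}$ up to powers of $\log|T|$, so the paper's stated choice of $p_T$ yields $o(p_T^2|\mathbb{B}_T|)$ only when $\eta\gtrsim\delta/8$, and your observation that $\eta$ may be enlarged without loss (which only shrinks $p_T$ and helps every other estimate) is the right repair for $\delta<4$; for the boundary case $\delta=4$ admitted by Theorem \ref{AthmC4} one must first pass to a slightly smaller $\delta'$ before enlarging $\eta$, a step your argument leaves implicit.
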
 
	
	\begin{proof}[Proof of Lemma \ref{AlemC6}]
		Define $\tau=2+\frac{\delta}{2}$. We have
		\begin{equation*}
		\mathbb{E}\left(\max_{\mathbf{\lambda} \in [0,2\pi]^2}|R(\mathbf{\lambda})|\right) \leq C\sum\limits_{\mathbf{l}\in \mathbf{L}_B} \mathbb{E}\left|\sum\limits_{\mathbf{j} \in \mathcal{R}_T}\widehat{V}(\mathbf{j})\widehat{V}(\mathbf{j}+\mathbf{l})\right|.
		\end{equation*}
		
		For $|l_1| \leq 2m$ and $|l_2| \leq 2m$, since $\widehat{V}(\mathbf{j})\widehat{V}(\mathbf{j}+\mathbf{l})$ is 4$m$-dependent, we have
		\begin{equation*}
		\left\|\sum\limits_{\mathbf{j} \in \mathcal{R}_T}\widehat{V}(\mathbf{j})\widehat{V}(\mathbf{j}+\mathbf{l})\right\|_2=O\left(\sqrt{|\mathcal{R}_T|}m\right) \,.
		\end{equation*}	
		For $|l_1| > 2m$ or $|l_2| > 2m,$
		\begin{equation*}
		\begin{aligned}
		\left\|\sum\limits_{\mathbf{j} \in \mathcal{R}_T}\widehat{V}(\mathbf{j})\widehat{V}(\mathbf{j}+\mathbf{l})\right\|^2_2 =&\sum\limits_{\mathbf{j}_1 \in \mathcal{R}_T}\sum\limits_{\mathbf{j}_2 \in \mathcal{R}_T}\mathbb{E}\left(\widehat{V}(\mathbf{j}_1)\widehat{V}(\mathbf{j}_1+\mathbf{l})\widehat{V}(\mathbf{j}_2)\widehat{V}(\mathbf{j}_2+\mathbf{l})\right) 
		= O\left(|\mathcal{R}_T|m^2\right) \,,
		\end{aligned}
		\end{equation*}
		since the sum vanishes if $\|\mathbf{j}_1-\mathbf{j}_2\|_{\infty}>2m$, $\mathbb{E}(\widehat{V}(\mathbf{j}))=0$.
		
		As a result,
		\begin{equation*}
		\begin{aligned}
		\mathbb{E}\left(\max_{\mathbf{\lambda} \in [0,2\pi]^2}|R(\mathbf{\lambda})|\right) =&\ O\left(\sqrt{|\mathcal{R}_T|}m|\mathbb{B}_T|\right)\\
		=&\ O\left(\sqrt{p_T(d_1+d_2-p_T)}m|\mathbb{B}_T|\right) \,.
		\end{aligned}
		\end{equation*}
		Let $\widehat{q}_T(\mathbf{\lambda})$ be the corresponding sum of $q_T(\mathbf{\lambda})$ with $V(\mathbf{j})V(\mathbf{j}+\mathbf{l})$ replaced by $\widehat{V}(\mathbf{j})\widehat{V}(\mathbf{j}+\mathbf{l})$. As in (\ref{AeqC2}), we have
		$\mathbb{E}\left(\max_{\mathbf{\lambda} \in [0,2\pi]^2}|q_T(\mathbf{\lambda})-\widehat{q}_T(\mathbf{\lambda})|\right)=o(1)\,$.

		To show (\ref{AeqC4}), it suffices to show $\mathbb{E}\left(\max_{\mathbf{\lambda} \in [0,2\pi]^2}|\widehat{q}_T(\mathbf{\lambda})|\right)=o(1)$, which follows from a similar arguments as in the proof of (\ref{AeqC3}).
		
		Regarding (\ref{AeqC5}) and let $\mathbf{l} = (l_1,l_2),\mathbf{l}^* = (l^*_1,l^*_2)$, we have 
		\begin{equation*}
		\begin{aligned}
		\text{Var}(A_\mathbf{1}(\mathbf{\lambda})) &=\left\|\sum\limits_{\mathbf{j} \in {\mathcal{A}_\mathbf{1}}} \sum\limits_{\mathbf{l}\in \mathbf{L}_B}[V(\mathbf{j})V(\mathbf{j}+\mathbf{l})-\gamma(\mathbf{l})]\alpha_\mathbf{l}\right\|^2\\
		&= \sum\limits_{\mathbf{j},\mathbf{j}^* \in {\mathcal{A}_\mathbf{1}}}\sum\limits_{\substack{\mathbf{l},\mathbf{l}^* \in \mathbf{L}_B}}\big[\gamma(\mathbf{j}-\mathbf{j}^*)\gamma(\mathbf{j}-\mathbf{j}^*+\mathbf{l}-\mathbf{l}^*)+\gamma(\mathbf{j}^*-\mathbf{j}+\mathbf{l}^*)\gamma(\mathbf{j}^*-\mathbf{j}-\mathbf{l})\\
		&\quad\quad\quad\quad\quad\quad\quad\quad\quad\quad +\text{cum}(V(\mathbf{0}),V(\mathbf{l}),V(\mathbf{j}^*-\mathbf{j}),V(\mathbf{j}^*-\mathbf{j}+\mathbf{l}^*))\big]\alpha_\mathbf{l} \alpha_{\mathbf{l}^*}\\
		&= I_1+I_2+I_3\,,
		\end{aligned}
		\end{equation*}
		where 
		\begin{eqnarray*}
			I_1 & = & \sum\limits_{\mathbf{j},\mathbf{j}^* \in {\mathcal{A}_\mathbf{1}}}\sum\limits_{\substack{\mathbf{l},\mathbf{l}^* \in \mathbf{L}_B}} \gamma(\mathbf{j}-\mathbf{j}^*)\gamma(\mathbf{j}-\mathbf{j}^*+\mathbf{l}-\mathbf{l}^*) \alpha_\mathbf{l}\alpha_{\mathbf{l}^*} \,,\\
			I_2 &=&\sum\limits_{\mathbf{j},\mathbf{j}^* \in {\mathcal{A}_\mathbf{1}}}\sum\limits_{\substack{\mathbf{l},\mathbf{l}^* \in \mathbf{L}_B}} \gamma(\mathbf{j}^*-\mathbf{j}+\mathbf{l}^*)\gamma(\mathbf{j}^*-\mathbf{j}-\mathbf{l}) \alpha_\mathbf{l}\alpha_{\mathbf{l}^*} \,,\\
			I_3&=&\sum\limits_{\mathbf{j},\mathbf{j}^* \in {\mathcal{A}_\mathbf{1}}}\sum\limits_{\substack{\mathbf{l},\mathbf{l}^* \in \mathbf{L}_B}}\text{cum}(V(\mathbf{0}),V(\mathbf{l}),V(\mathbf{j}^*-\mathbf{j}),V(\mathbf{j}^*-\mathbf{j}+\mathbf{l}^*))\alpha_\mathbf{l} \alpha_{\mathbf{l}^*} \,.
		\end{eqnarray*}
		Note that $I_1$ is bounded by 
		\begin{equation*}
		C\sum\limits_{\mathbf{j} \in {\mathcal{A}_\mathbf{1}}}(p_T-|j_1|)(p_T-|j_2|)|\gamma(\mathbf{j})|\sum\limits_{\mathbf{l} \in \mathbf{L}_{2B}} (2B_{T1}+1-|l_1|)(2B_{T2}+1-|l_2|)|\gamma(\mathbf{j}+\mathbf{l})|\,,
		\end{equation*}
		where $\mathbf{L}_{2B}=\{(l_1,l_2)\in \mathbb{Z}^2: (|l_1|,|l_2|) \in 2{\mathbb{B}_T}\}$ with $2\mathbb{B}_T=\{(l_1,l_2)\in \mathbb{Z}^2:1 \leq l_k \leq 2B_{Tk}, k=1,2\}$, where $(2B_{T1},2B_{T2})=\left(\frac{2}{{h_T}_1},\frac{2}{{h_T}_2}\right)$, which is less than 
		\begin{equation*}
		Cp^2_T\left(4|\mathbb{B}_T|+o(|\mathbb{B}_T|)\right) \left(\sum\limits_{\mathbf{l} \in\mathbb{Z}^2}|\gamma(\mathbf{l})| \right)^2=O\left(p^2_T|\mathbb{B}_T|\right) \,.
		\end{equation*}
		Similarly, smaller bounds can be obtained for $I_2$ and $I_3$ due to the summability of the second and fourth cumulants. Thus, we have
		$\max_{\mathbf{\lambda} \in [0,2\pi]^2} \text{Var}(A_\mathbf{1}(\mathbf{\lambda}))=O\left(p^2_T|\mathbb{B}_T|\right)$.
 
		For (\ref{AeqC6}), let $\nu=\text{Var}(A_\mathbf{1}(\mathbf{\lambda})-A^{*}_\mathbf{1}(\mathbf{\lambda}))$ 
		and $c=\mathbb{E}(A^{*}_\mathbf{1}(\mathbf{\lambda}))\mathbb{E}(A_\mathbf{1}(\mathbf{\lambda})-A^{*}_\mathbf{1}(\mathbf{\lambda}))$. 
		Then, 
		\begin{equation*}
		\text{Var}(A^{*}_\mathbf{1}(\mathbf{\lambda}))=\text{Var}(A_\mathbf{1}(\mathbf{\lambda}))-\nu+2c\,.
		\end{equation*}
		By Markov's inequality, Holder's inequality and Lemma \ref{AlemC5},
		\begin{equation*}
		\begin{aligned}
		\nu &= \text{Var}(A_\mathbf{1}(\mathbf{\lambda})-A^{*}_\mathbf{1}(\mathbf{\lambda})) = \text{Var}\left(A_\mathbf{1}(\mathbf{\lambda})\mathds{1}_{\left\{|A_\mathbf{1}(\mathbf{\lambda})|>c_T\right\}}\right)\\
& \leq \mathbb{E}\left(|A^2_\mathbf{1}(\mathbf{\lambda})|^{\frac{\tau}{2}}\right)^{\frac{2}{\tau}}\left[\mathbb{E}\left(\mathds{1}_{\left\{|A_\mathbf{1}(\mathbf{\lambda})|>c_T\right\}}\right)\right]^{\frac{\tau-2}{\tau}}\\
		&\leq \frac{\mathbb{E}(|A_\mathbf{1}(\mathbf{\lambda})|^{\tau})}{{c_T}^{\tau-2}} = \frac{O\left(m^2\sqrt{|\mathbb{B}_T|}p_T\right)^{\tau}}{{c_T}^{\tau-2}} = o\left(p^2_T|\mathbb{B}_T|\right)\,,
		\end{aligned}
		\end{equation*}
		and similarly, 
		$c \leq \|A_\mathbf{1}(\mathbf{\lambda})\|^{\tau+1}_{\tau}/{c_T}^{\tau-1}= o\left(p^2_T|\mathbb{B}_T|\right)$. 
		By Lemma \ref{AlemC3} and the fact that $f$ is every-where positive, (\ref{AeqC6}) follows.
	\end{proof}
	
	Using Lemmas \ref{AlemC5} and \ref{AlemC6}, we now continue the proof of Theorem \ref{AthmC4}. Let
	\begin{equation*}
	\begin{aligned}
	&G_T(\mathbf{\lambda})\ =\sum\limits_{\mathbf{r} \in k_{T_A}}[A_\mathbf{r}(\mathbf{\lambda})-\mathbb{E}(A_\mathbf{r}(\mathbf{\lambda}))],
	&G^{*}_T(\mathbf{\lambda}) =\sum\limits_{\mathbf{r} \in k_{T_A}}[A^{*}_\mathbf{r}(\mathbf{\lambda})-\mathbb{E}(A^{*}_\mathbf{r}(\mathbf{\lambda}))] \,.
	\end{aligned}
	\end{equation*}
	Let $\mathbf{\lambda}_\mathbf{j}=\left(\frac{\pi j_1}{t_{T1}},\frac{\pi j_2}{t_{T2}}\right)$ where $j_1=0,\ldots,t_{T1}:=\lfloor B_{T1}\log(B_{T1}) \rfloor, ~j_2=0,\ldots,t_{T2}:=\lfloor B_{T2}\log(B_{T2}) \rfloor$.
	Let 
	\begin{equation*}
	\begin{aligned}
	C_{Ti}=\frac{1}{1-\frac{3\pi}{\log B_{Ti}}} \rightarrow 1 \,,
	\end{aligned}
	\end{equation*}
	for $i=1,2$. By the argument similar to Corollary 2.1 in \cite{WM1967}, we have
	\begin{equation*}
	\max_{\mathbf{\lambda} \in [0,2\pi]^2}|G_T(\mathbf{\lambda})| \leq C_{T1}C_{T2} \max_{\substack{j_1 \leq t_{T1} \\ j_2 \leq t_{T2}}} |G_T(\mathbf{\lambda}_\mathbf{j})| \,.
	\end{equation*}
	By (\ref{AeqC5}) and (\ref{AeqC6}), there exists a constant $C_1>1$ such that 
	$\max_{\mathbf{\lambda} \in [0,2\pi]^2} \text{Var}(A^{*}_\mathbf{1}(\mathbf{\lambda})) \leq C_1 p^2_T|\mathbb{B}_T| \,$.\\
	Let $\beta_T=(C_1|T||\mathbb{B}_T|\log|T|)^{\frac{1}{2}}$, By Bernstein's inequality, we have 
	\begin{equation*}
	\begin{aligned}
	\mathbb{P}\left( \max_{\substack{0 \leq j_1 \leq t_{T1} \\ 0 \leq j_2 \leq t_{T2}}}|G^{*}_T(\mathbf{\lambda}_\mathbf{j})| \geq 4\beta_T\right) &\leq \sum\limits^{t_{T1}}_{j_1=0} \sum\limits^{t_{T2}}_{j_2=0}\mathbb{P}(|G^{*}_T(\mathbf{\lambda}_\mathbf{j})| \geq 4\beta_T)\\
	&= O(t_{T1}t_{T2})\exp \left(\frac{-16\beta^2_T}{2|k_{T_\mathcal{A}}|C_1p^2_T|\mathbb{B}_T|+16c_T\beta_T}\right)
	=o(1) \,.
	\end{aligned}
	\end{equation*}
	Let $A^{**}_\mathbf{r}(\mathbf{\lambda})=A_\mathbf{r}(\mathbf{\lambda})-A^{*}_\mathbf{r}(\mathbf{\lambda})$ and $G^{**}_T(\mathbf{\lambda})=G_T(\mathbf{\lambda})-G^{*}_T(\mathbf{\lambda}).$ Then, by Markov's inequality and Lemma \ref{AlemC5},
	\begin{equation*}
	\begin{aligned}
	\mathbb{P}\left( \max_{\substack{0 \leq j_1 \leq t_{T1} \\ 0 \leq j_2 \leq t_{T2}}}|G^{**}_T(\mathbf{\lambda}_\mathbf{j})| \geq 4\beta_T \right) &\leq \sum\limits^{t_{T1}}_{j_1=0} \sum\limits^{t_{T2}}_{j_2=0}\mathbb{P}(|G^{**}_T(\mathbf{\lambda}_\mathbf{j})| \geq 4\beta_T) 
	\leq \sum\limits^{t_{T1}}_{j_1=0} \sum\limits^{t_{T2}}_{j_2=0} \frac{\text{Var}(A^{**}_\mathbf{r}(\mathbf{\lambda}_\mathbf{j}))|k_{T_{\mathcal{A}}}|}{16\beta^2_T}\\
	&= \frac{O\left(t_{T1}t_{T2}|k_{T_{\mathcal{A}}}|p^{\tau}_T|\mathbb{B}_T|^{\frac{\tau}{2}}m^{2\tau}\right)}{\beta^2_Tc^{\tau-2}_T}
	= O\left((\log|T|)^{-\frac{\delta}{4}}\right) = o(1) \,.
	\end{aligned}
	\end{equation*}
	Therefore, we have $\max\limits_{\mathbf{\lambda} \in [0,2\pi]^2}|G_T(\mathbf{\lambda})|=O_p(\beta_T)$. Using similar arguments, the same bound also holds for the sums
$\sum\limits_{\mathbf{r} \in k_{T_\mathcal{B}}}[B_\mathbf{r}(\lambda)-\mathbb{E}(B_\mathbf{r}(\lambda))]$, 
$\sum\limits_{\mathbf{r} \in k_{T_\mathcal{C}}}[C_\mathbf{r}(\lambda)-\mathbb{E}(C_\mathbf{r}(\lambda))]$, and 
$\sum\limits_{\mathbf{r} \in k_{T_\mathcal{D}}}[D_\mathbf{r}(\lambda)-\mathbb{E}(D_\mathbf{r}(\lambda))]$. 
	By (\ref{AeqC2}), we have $\mathbb{E}\left(\max_{\mathbf{\lambda} \in [0,2\pi]^2}|\widehat{g}_T(\mathbf{\lambda})-g_T(\mathbf{\lambda})|\right)=o(1)$. As a result, by (\ref{AeqC3}), (\ref{AeqC4}) and (\ref{AeqC1}), the proof is completed.    
\end{proof}

The following lemma states that $\widehat{f}_T(\mathbf{\lambda})$ in \eqref{kernelesti} and $\widetilde{f}_T(\mathbf{\lambda})$ are asymptotically equivalent.
\begin{lem}\label{lemequivalent}
	Under Assumptions \ref{assk1} and \ref{assdepenmeasure} with $p>2$, we have
	$\max_{\mathbf{\lambda} \in {[0,2\pi]^2}} \left|\widetilde{f}_T(\mathbf{\lambda})-\widehat{f}_T(\mathbf{\lambda}) \right|=o(1)$. 
\end{lem}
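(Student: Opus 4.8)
The plan is to show that $\widehat{f}_T$ and $\widetilde{f}_T$ are, up to asymptotically negligible terms, the \emph{same} linear functional of the sample autocovariances, so that the equivalence reduces to an algebraic identity perturbed by two small errors: the normalizing constant in \eqref{kernelesti} and the replacement of the true mean $\mu$ by the sample mean $\bar V_T$. Throughout I work under the kernel regularity of the present subsection (so that $k$ in \eqref{smallk} and the inverse-Fourier representation of $K_\mathbf{h}$ in Remark \ref{rem4} are available) together with Assumptions \ref{assk1} and \ref{assdepenmeasure}.

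\textbf{Step 1 (reduction to a common lag-window form).} First I would exploit the $\mathbf{d}_T$-periodicity of the periodogram $I(\cdot)$ in \eqref{periodogram_I}. Writing each $\mathbf{j}\in\mathbb{Z}^2$ as $\mathbf{j}=\mathbf{j}_0+(c_1 d_1,c_2 d_2)$ with $\mathbf{j}_0\in T$ and $(c_1,c_2)\in\mathbb{Z}^2$, and summing the kernel over $(c_1,c_2)$, turns the numerator of \eqref{kernelesti} into the periodized kernel $K_\mathbf{h}$ of \eqref{Kh}, giving
\begin{equation*}
\widehat{f}_T(\mathbf{\lambda})=\frac{|\mathbf{h}_T|}{4\pi^2 D_T}\sum_{\mathbf{j}_0\in T} I(\mathbf{j}_0)\,K_\mathbf{h}(\mathbf{\lambda}-\mathbf{\lambda}_{\mathbf{j}_0})\,,\qquad D_T:=\sum_{\mathbf{k}\in\mathbb{Z}^2}K\left(\frac{2\pi k_1}{{h_T}_1 d_1},\frac{2\pi k_2}{{h_T}_2 d_2}\right).
\end{equation*}
Next I substitute the inverse-Fourier representation of $K_\mathbf{h}$ from Remark \ref{rem4}, interchange the (finite, absolutely summable) sums, and use the orthogonality identity $\sum_{\mathbf{j}_0\in T}e^{\,i(\mathbf{r}-\mathbf{s})'\mathbf{\lambda}_{\mathbf{j}_0}}=|T|\,\mathds{1}\{\mathbf{r}\equiv\mathbf{s}\ (\mathrm{mod}\ \mathbf{d}_T)\}$ together with $I(\mathbf{j}_0)=\sum_{\mathbf{s}}\widehat{R}_V(\mathbf{s})e^{-i\mathbf{s}'\mathbf{\lambda}_{\mathbf{j}_0}}$, where $\widehat{R}_V$ is \eqref{auto_sample} with $\mu$ replaced by $\bar V_T$; note that this mean-removed transform vanishes on $D$, consistent with the definition $I\equiv0$ there. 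This collapses the frequency sum to $\sum_{\mathbf{j}_0\in T}I(\mathbf{j}_0)e^{\,i\mathbf{r}'\mathbf{\lambda}_{\mathbf{j}_0}}=|T|\,\widehat{R}_V(\mathbf{r})$ for the relevant lags, yielding the exact identity $\widehat{f}_T(\mathbf{\lambda})=\kappa_T\,\widetilde{f}_T^{\,\widehat R}(\mathbf{\lambda})$, where $\widetilde{f}_T^{\,\widehat R}$ is the lag-window estimator built from $\widehat{R}_V$ and $\kappa_T:=|\mathbf{h}_T||T|/(4\pi^2 D_T)$.

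\textbf{Step 2 (normalizing constant) and Step 3 (mean correction).} By Assumption \ref{assk1} the denominator satisfies $D_T=|\mathbf{h}_T||T|/(4\pi^2)\,(1+o(1))$, hence $\kappa_T=1+o(1)$ is a deterministic factor and $\widehat{f}_T-\widetilde{f}_T^{\,\widehat R}=(\kappa_T-1)\widetilde{f}_T^{\,\widehat R}=o_p(1)$ uniformly in $\mathbf{\lambda}$, provided $\max_{\mathbf{\lambda}}|\widetilde{f}_T^{\,\widehat R}(\mathbf{\lambda})|=O_p(1)$; in the context in which the lemma is applied the latter is available from the uniform consistency of the lag-window estimator (Theorem \ref{AthmC4} together with the vanishing bias) combined with Step 3. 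Finally, $\widetilde{f}_T^{\,\widehat R}$ and $\widetilde{f}_T$ differ only through $\bar V_T-\mu$, and $\max_{\mathbf{\lambda}}|\widetilde{f}_T^{\,\widehat R}-\widetilde{f}_T|$ is controlled exactly as the term $a_1$ in \eqref{proof_51_3}: using $\bar V_T-\mu=O_p(|T|^{-1/2})$ (guaranteed by Assumption \ref{assdepenmeasure} with $p\ge2$) and the integrability of $\widetilde k$, this is $o_p(1)$. A triangle inequality then gives $\max_{\mathbf{\lambda}}|\widehat{f}_T-\widetilde{f}_T|=o_p(1)$.

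\textbf{Main obstacle.} The crux is Step 1: justifying the interchange of summation and, above all, the passage $\sum_{\mathbf{j}_0\in T}I(\mathbf{j}_0)e^{\,i\mathbf{r}'\mathbf{\lambda}_{\mathbf{j}_0}}=|T|\,\widehat{R}_V(\mathbf{r})$ uniformly in $\mathbf{\lambda}$ while discarding the aliased contributions $\widehat{R}_V(\mathbf{r}-(c_1 d_1,c_2 d_2))$ with $(c_1,c_2)\neq(0,0)$. These are negligible because $k(r_1{h_T}_1,r_2{h_T}_2)$ is supported on (or decays rapidly off) $|r_k|\le 1/{h_T}_k=o(d_k)$, so only $(c_1,c_2)=(0,0)$ survives for large $T$; making this rigorous simultaneously over all $\mathbf{\lambda}$, and securing the $O_p(1)$ bound in Step 2 without circularity, is where the care is needed.
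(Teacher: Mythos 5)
Your proposal is correct and follows essentially the same route as the paper's proof: periodize the kernel sum into $K_{\mathbf{h}}$, use the inverse-Fourier representation from Remark \ref{rem4} and interchange sums to recover the lag-window form in $\widehat{R}_V$, control the normalizing constant via Assumption \ref{assk1}, and dispose of the $\bar V_T-\mu$ correction via Assumption \ref{assdepenmeasure} with $p>2$. If anything you are more explicit than the paper about two points it glosses over --- the negligibility of the aliased lags $\widehat{R}_V(\mathbf{r}-(c_1d_1,c_2d_2))$ and the stochastic boundedness needed to convert the relative error in the denominator into a uniform absolute error.
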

\begin{proof}[Proof of Lemma \ref{lemequivalent}]
	For any $\mathbf{\lambda}=(\lambda_1,\lambda_2) \in {[0,2\pi]^2}$, we have
	\begin{equation*}
	\begin{aligned}
	\widehat{f}_T(\mathbf{\lambda})=\ &\frac{\sum_{\mathbf{j} \in\mathbb{Z}^2}K\left(\frac{\lambda_1-\lambda_{j1}}{{h_T}_1},\frac{\lambda_2-\lambda_{j2}}{{h_T}_2}\right)I(\mathbf{j})}{4\pi^2\sum_{\mathbf{j} \in\mathbb{Z}^2}K\left(\frac{\lambda_{j1}}{{h_T}_1},\frac{\lambda_{j2}}{{h_T}_2}\right)}\\
	=\ &\frac{1}{|\mathbf{h}_T||T|}\sum\limits_{\mathbf{j} \in\mathbb{Z}^2}K\left(\frac{\lambda_1-\lambda_{j1}}{{h_T}_1},\frac{\lambda_2-\lambda_{j2}}{{h_T}_2}\right)I(\mathbf{j})+o(1)\\
	=\ &\frac{1}{|\mathbf{h}_T||T|}\sum\limits_{\mathbf{t} \in T}I(\mathbf{t})\sum\limits_{\mathbf{j} \in\mathbb{Z}^2}K\left(\frac{\lambda_1-\lambda_{t1}+2\pi j_1}{{h_T}_1},\frac{\lambda_2-\lambda_{t2}+2\pi j_2}{{h_T}_2}\right)+o(1)\\
	=\ &\frac{1}{|T|}\sum\limits_{\mathbf{t} \in T}I(\mathbf{t})K_\mathbf{h}(\mathbf{\lambda}-\mathbf{\lambda}_\mathbf{t})+o(1)\\
	=\ &\frac{1}{4\pi^2|T|}\sum\limits_{\mathbf{t} \in T}I(\mathbf{t})\sum\limits_{\mathbf{j} \in\mathbb{Z}^2}k(j_1 {h_T}_1,j_2 {h_T}_2)\exp(-i\mathbf{j}'(\mathbf{\lambda}-\mathbf{\lambda}_\mathbf{t}))+o(1)\\
	=\ &\frac{1}{4\pi^2}\sum\limits_{\mathbf{j} \in\mathbb{Z}^2}\left(\frac{1}{|T|}\sum\limits_{\mathbf{t} \in T}I(\mathbf{t})\exp(i\mathbf{j}'\mathbf{\lambda}_\mathbf{t})\right)k(j_1 {h_T}_1,j_2 {h_T}_2)\exp(-i\mathbf{j}'\mathbf{\lambda})+o(1)\\
	=\ &\frac{1}{4\pi^2}\sum\limits_{\mathbf{j} \in\mathbb{Z}^2}\widehat{R}_V(\mathbf{j})k(j_1 {h_T}_1,j_2 {h_T}_2)\exp(-i\mathbf{j}'\mathbf{\lambda})+o(1)
	= \widetilde{f}_T(\mathbf{\lambda})+o(1)\,.
	\end{aligned}
	\end{equation*}
	For the last two equalities, note that by Assumption \ref{assdepenmeasure} with $p>2$, $[\bar{V}_T-\mathbb{E}(V(\mathbf{0}))]^2$ is asymptotically negligible.
\end{proof}

\begin{proof}[Proof of Theorem \ref{thm3}(b)]	
	As a direct consequence of Theorem \ref{AthmC4} and Lemma \ref{lemequivalent}, we have 
	\begin{equation*}
	\max_{\mathbf{\lambda} \in {[0,2\pi]^2}} \left|\widehat{f}_T(\mathbf{\lambda})-\mathbb{E} (\widetilde{f}_T(\mathbf{\lambda})) \right|=o(1)\,.
	\end{equation*}
	It remains to show that $\max_{\mathbf{\lambda} \in {[0,2\pi]^2}} \left|\mathbb{E}(\widetilde{f}_T(\mathbf{\lambda}))-f(\mathbf{\lambda}) \right|=o(1)$.
	This holds since by Assumption \ref{assk4}, $k(\cdot)$ is bounded, continuous and with compact support $[-1,1]^2$. Also, $k(0)=1$ as $T \rightarrow \infty$. Thus, for any ${\mathbf{\lambda} \in {[0,2\pi]^2}}$, we have
	\begin{equation}\label{dev_proof_51b}
	\begin{aligned}
	&\left|\mathbb{E}(\widetilde{f}_T(\mathbf{\lambda}))-f(\mathbf{\lambda})\right|\\
	=\ &\left|\frac{1}{4\pi^2}\sum\limits_{\mathbf{j} \in \mathbf{L}_T}\frac{(d_1-|j_1|)(d_2-|j_2|)}{|T|}\gamma(\mathbf{j})k(j_1 {h_T}_1,j_2 {h_T}_2)\exp(-i\mathbf{j}'\mathbf{\lambda})-\frac{1}{4\pi^2}\sum\limits_{\mathbf{j} \in\mathbb{Z}^2}\gamma(\mathbf{j})\exp(i\mathbf{j}'\mathbf{\lambda})\right|\\
	\asymp\ &\sum\limits_{\mathbf{j}:\|\mathbf{j}\| \geq \sqrt{\frac{1}{\|\mathbf{h}_T\|}}}|\gamma(\mathbf{j})|\ +\sum\limits_{\mathbf{j}:\|\mathbf{j}\| \leq \sqrt{\frac{1}{\|\mathbf{h}_T\|}}}\left[\left(\frac{(d_1-|j_1|)(d_2-|j_2|)}{|T|}\right)k(j_1 {h_T}_1,j_2 {h_T}_2)-1\right]|\gamma(\mathbf{j})|\\
	=\ &o(1)\,,
	\end{aligned}
	\end{equation}
	where $\mathbf{L}_T=\{\mathbf{j} \in \mathbb{Z}^2: (|j_1|,|j_2|) \in T\}$. 
	The above arguments used the fact that for $\|\mathbf{j}\| \geq \sqrt{\frac{1}{\|\mathbf{h}_T\|}}$, we have
	\begin{equation*}
	\begin{aligned}
	\left|\left(\frac{(d_1-|j_1|)(d_2-|j_2|)}{|T|}\right)k(j_1 {h_T}_1,j_2 {h_T}_2)-1\right|
	\leq  \left|\frac{(d_1-|j_1|)(d_2-|j_2|)}{|T|}C^*\right|+1
	\leq  C^*+1\,,
	\end{aligned}
	\end{equation*}
	and for $\|\mathbf{j}\| \leq \sqrt{\frac{1}{\|\mathbf{h}_T\|}}$, we have
	\begin{equation*}
	\begin{aligned}
	&\left(\frac{(d_1-|j_1|)(d_2-|j_2|)}{|T|}\right)k(j_1 {h_T}_1,j_2 {h_T}_2)-k(\mathbf{0})\\
	=\ &\left(1-\frac{|j_1|}{d_1}\right)\left(1-\frac{|j_2|}{d_2}\right)\left[k(j_1 {h_T}_1,j_2 {h_T}_2)-k(\mathbf{0})\right]+\frac{|j_1|}{d_1}+\frac{|j_2|}{d_2}-\frac{|j_1||j_2|}{d_1d_2}\\
	\asymp\ & \sup_{\|\mathbf{x}\| \leq \sqrt{\|\mathbf{h}_T\|}}{|k(x_1,x_2)-k(\mathbf{0})|}+\frac{1}{\sqrt{\|\mathbf{h}_T\|}d_1}+\frac{1}{\sqrt{\|\mathbf{h}_T\|}d_2}\\
	=\ & o(1)+o(1)+o(1)	=  o(1)\,.
	\end{aligned}
	\end{equation*} 
	
	
\end{proof}

\subsection{Proof of Theorem \ref{thm_estimated}}

\begin{proof}[Proof of Theorem \ref{thm_estimated}]
	First, note that we have
	\begin{equation*}
	\left|\sum\limits_{\mathbf{j} \in T}\left[\cos(\mathbf{\lambda}_\mathbf{j}' \mathbf{t_1})\cos(\mathbf{\lambda}_\mathbf{j}' \mathbf{t_2})+\sin(\mathbf{\lambda}_\mathbf{j}' \mathbf{t_1})\sin(\mathbf{\lambda}_\mathbf{j}' \mathbf{t_2})\right]\right| = \left\{ \begin{array}{lcr}
	|T|, & \mathbf{t_1}=\mathbf{t_2}\,, \\
	0, & \mathbf{t_1} \neq \mathbf{t_2}\,.
	\end{array}\right.
	\end{equation*}	
	
	For $\mathbf{t_1}=\mathbf{t_2}$, the equation is trivial by considering $\cos^2x+\sin^2x=1$.
	
	For $\mathbf{t_1} \neq \mathbf{t_2}$, we have
	\begin{equation*}
	\begin{aligned}
	&\sum\limits_{\mathbf{j} \in T}\left[\cos(\mathbf{\lambda}_\mathbf{j}'\mathbf{t_1})\cos(\mathbf{\lambda}_\mathbf{j}'\mathbf{t_2})+\sin(\mathbf{\lambda}_\mathbf{j}' \mathbf{t_1})\sin(\mathbf{\lambda}_\mathbf{j}' \mathbf{t_2})\right]\\
	=\ & \sum\limits_{\mathbf{j} \in T}\cos(\mathbf{\lambda}_\mathbf{j}' (\mathbf{t_1}-\mathbf{t_2})) 
	=\ \sum\limits_{\mathbf{j} \in T}\cos(\mathbf{\lambda}_\mathbf{j}'\mathbf{c})\quad\quad\quad (\text{By letting}~\mathbf{t_1}-\mathbf{t_2}=\mathbf{c} \in\mathbb{Z}^2)\\
	=\ & \sum\limits_{\mathbf{j} \in T}\left[\cos(\lambda_{j1}c_1)\cos(\lambda_{j2}c_2)-\sin(\lambda_{j1}c_1)\sin(\lambda_{j2}c_2)\right]\\
	=\ & \sum\limits^{d_1}_{j_1=1} \sum\limits^{d_2}_{j_2=1}\left[\cos\left(\frac{2\pi j_1}{d_1}c_1\right)\cos\left(\frac{2\pi j_2}{d_2}c_2\right)-\sin\left(\frac{2\pi j_1}{d_1}c_1\right)\sin\left(\frac{2\pi j_2}{d_2}c_2\right)\right]\\
	=\ &\left[\sum\limits^{d_1}_{j_1=1}\cos\left(\frac{2\pi j_1}{d_1}c_1\right)\right]\left[\sum\limits^{d_2}_{j_2=1}\cos\left(\frac{2\pi j_2}{d_2}c_2\right)\right]-\left[\sum\limits^{d_1}_{j_1=1}\sin\left(\frac{2\pi j_1}{d_1}c_1\right)\right]\left[\sum\limits^{d_2}_{j_2=1}\sin\left(\frac{2\pi j_2}{d_2}c_2\right)\right]\\
	=\ &0\,.
	\end{aligned}
	\end{equation*}

	Furthermore, denote  
	\begin{equation*}
	\begin{aligned}
	&F_T(\mathbf{j})= \sum\limits_{\mathbf{t} \in T}(V(\mathbf{t})-\widehat{V}(\mathbf{t}))\cos(\mathbf{\lambda}_\mathbf{j}' \mathbf{t}),
	&G_T(\mathbf{j})= \sum\limits_{\mathbf{t} \in T}(V(\mathbf{t})-\widehat{V}(\mathbf{t}))\sin(\mathbf{\lambda}_\mathbf{j}' \mathbf{t}), \quad \mathbf{j} \in T\,.
	\end{aligned}
	\end{equation*}
	By the previous equality and the definition of the kernel spectral density estimator in (\ref{kernelesti}), applying the Cauchy-Schwarz inequality yields
	\begin{equation*}
	\begin{aligned}
	|F_T(\mathbf{j})| & \leq \sum\limits_{\mathbf{t} \in T}|V(\mathbf{t})-\widehat{V}(\mathbf{t})| \leq \sqrt{\sum\limits_{\mathbf{t} \in T}(V(\mathbf{t})-\widehat{V}(\mathbf{t}))^2\sum\limits_{\mathbf{t} \in T}1^2}\\
	& =\sqrt{\left(\frac{1}{|T|}\sum\limits_{\mathbf{t} \in T}(V(\mathbf{t})-\widehat{V}(\mathbf{t}))^2\right)|T|^2} = o_p\left(|T|\alpha^{-\frac{1}{2}}_T\right)\,.
	\end{aligned}
	\end{equation*}
	Similarly, $|G_T(\mathbf{j})|=o_p\left(|T|\alpha^{-\frac{1}{2}}_T\right)$.
	Also, we have
	\begin{equation*}
	\begin{aligned}
	&\sum\limits_{\mathbf{j} \in T}(F^2_T(\mathbf{j})+G^2_T(\mathbf{j}))\\
	=\ &\sum\limits_{\mathbf{t_1} \in T} \sum\limits_{\mathbf{t_2} \in T}(V(\mathbf{t_1})-\widehat{V}(\mathbf{t_1}))(V(\mathbf{t_2})-\widehat{V}(\mathbf{t_2}))\left[\sum\limits_{\mathbf{j} \in T}(\cos(\mathbf{\lambda}_\mathbf{j}' \mathbf{t_1})\cos(\mathbf{\lambda}_\mathbf{j}' \mathbf{t_2}) +\sin(\mathbf{\lambda}_\mathbf{j}' \mathbf{t_1})\sin(\mathbf{\lambda}_\mathbf{j}' \mathbf{t_2}))\right]\\
	\asymp\ & |T|\sum\limits_{\mathbf{t} \in T}(V(\mathbf{t})-\widehat{V}(\mathbf{t}))^2+\sum\limits_{\mathbf{t_1} \neq \mathbf{t_2}}|(V(\mathbf{t_1})-\widehat{V}(\mathbf{t_1}))(V(\mathbf{t_2})-\widehat{V}(\mathbf{t_2}))|\times 0\\
	=\ &o_p\left(|T|^2\alpha^{-1}_T\right)\,.
	\end{aligned}
	\end{equation*}
	With this definition, we get
	\begin{eqnarray*}
	x_{V}(\mathbf{j})-x_{\widehat{V}}(\mathbf{j})&=&\frac{1}{\sqrt{|T|}}\sum\limits_{\mathbf{t} \in T}(V(\mathbf{t})-\widehat{V}(\mathbf{t}))\cos(-\mathbf{\lambda}_\mathbf{j}' \mathbf{t})=|T|^{-\frac{1}{2}}F_T(\mathbf{j})\,,\\
	y_V(\mathbf{j})-y_{\widehat{V}}(\mathbf{j})&=&\frac{1}{\sqrt{|T|}}\sum\limits_{\mathbf{t} \in T}(V(\mathbf{t})-\widehat{V}(\mathbf{t}))\sin(-\mathbf{\lambda}_\mathbf{j}' \mathbf{t})=|T|^{-\frac{1}{2}}(-G_T(\mathbf{j}))\,.
	\end{eqnarray*}
	Since $a^2-b^2=-(a-b)^2+2a(a-b)$, we have  
	\begin{equation*}
	\begin{aligned}
	I_V(\mathbf{j})-I_{\widehat{V}}(\mathbf{j}) 
	=\ &[x^2_V(\mathbf{j})-x^2_{\widehat{V}}(\mathbf{j})]+[y^2_V(\mathbf{j})-y^2_{\widehat{V}}(\mathbf{j})]\\
	=\ &-(x_V(\mathbf{j})-x_{\widehat{V}}(\mathbf{j}))^2+2x_V(\mathbf{j})(x_V(\mathbf{j})-x_{\widehat{V}}(\mathbf{j}))\\
	& \quad\quad -(y_V(\mathbf{j})-y_{\widehat{V}}(\mathbf{j}))^2+2y_V(\mathbf{j})(y_V(\mathbf{j})-y_{\widehat{V}}(\mathbf{j}))\\
	=\ &-\frac{1}{|T|}(F^2_T(\mathbf{j})+G^2_T(\mathbf{j}))+2\frac{1}{\sqrt{|T|}}x_V(\mathbf{j})F_T(\mathbf{j})-2\frac{1}{\sqrt{|T|}}y_V(\mathbf{j})G_T(\mathbf{j})\,.
	\end{aligned}
	\end{equation*}
	
		
		Now, we can prove the assertion in Theorem \ref{thm_estimated}.
		For the proof of (a), recall that by the assumption $K(\mathbf{x}) \geq 0$, we have $\sup|K(\mathbf{x})| < \infty$, 
$		\sup_{\mathbf{\lambda} \in [0,2\pi]^2}|K_\mathbf{h}(\mathbf{\lambda})|=O\left(|\mathbf{h}_T|^{-1}\right)=O\left(|{h_T}_1 {h_T}_2|^{-1}\right)
$, and
		\begin{equation*}
		\frac{4\pi^2}{|T||\mathbf{h}_T|}\sum\limits_{\mathbf{j} \in\mathbb{Z}^2}K \left(\frac{2\pi j_1}{{h_T}_1 d_1},\frac{2\pi j_2}{{h_T}_2 d_2} \right)=1+o(1)\,.
		\end{equation*}
		Let
		\begin{equation*}
		p_{\mathbf{l},T}=\frac{K\left(\frac{2\pi l_1}{{h_T}_1 d_1},\frac{2\pi l_2}{{h_T}_2 d_2}\right)}{\sum\limits_{\mathbf{j} \in\mathbb{Z}^2}K\left(\frac{2\pi j_1}{{h_T}_1 d_1},\frac{2\pi j_2}{{h_T}_2 d_2}\right)}\,.
		\end{equation*}
		By the Cauchy-Schwarz inequality,
		\begin{equation*}
		\begin{aligned}
		&\sup_{\mathbf{j} \in T}|\widehat{f}_V(\mathbf{\lambda}_\mathbf{j})-\widehat{f}_{\widehat{V}}(\mathbf{\lambda}_\mathbf{j})| 
		\ \leq\ \sup_{\mathbf{j} \in T} \sum\limits_{\mathbf{k} \in\mathbb{Z}^2}p_{\mathbf{j}-\mathbf{k},T}|I_V(\mathbf{k})-I_{\widehat{V}}(\mathbf{k})|\\
		\asymp\  &\frac{1}{|\mathbf{h}_T||T|^2}\sum\limits_{\mathbf{k} \in T}(F^2_T(\mathbf{k})+G^2_T(\mathbf{k}))+\sup_{\mathbf{j} \in T}\frac{1}{\sqrt{|T|}}\sum\limits_{\mathbf{k} \in\mathbb{Z}^2}p_{\mathbf{j}-\mathbf{k},T}\left(x_V(\mathbf{k})F_T(\mathbf{k})-y_V(\mathbf{k})G_T(\mathbf{k})\right)\\
		\asymp\  &o_p\left(\frac{1}{|\mathbf{h}_T|\alpha_T}\right)+\sup_{\mathbf{j} \in T}\frac{1}{\sqrt{|T|}}\sqrt{\left(\sum\limits_{\mathbf{k} \in\mathbb{Z}^2}p_{\mathbf{j}-\mathbf{k},T}I_V(\mathbf{k})\right)\left(\sum\limits_{\mathbf{k} \in\mathbb{Z}^2}p_{\mathbf{j}-\mathbf{k},T}(F^2_T(\mathbf{k})+G^2_T(\mathbf{k}))\right)}\\
		=\ &o_p\left((|\mathbf{h}_T|\alpha_T)^{-1}\right)+o_p\left((|\mathbf{h}_T|\alpha_T)^{-\frac{1}{2}}\right) 
		= o_p(1)\,,
		\end{aligned}
		\end{equation*}
		which yields the assertion in part (a). 
		
		For the proof of (b)(i), note that
		\begin{align*}
		\frac{1}{|N|}\sum\limits_{\mathbf{j} \in N}\frac{x_V(\mathbf{j})-x_{\widehat{V}}(\mathbf{j})+y_V(\mathbf{j})-y_{\widehat{V}}(\mathbf{j})}{\sqrt{f(\mathbf{\lambda}_\mathbf{j})}} 
         =\ & \frac{1}{|N|}\sum\limits_{\mathbf{j} \in N}\frac{1}{\sqrt{|T|}}\frac{F_T(\mathbf{j})-G_T(\mathbf{j})}{\sqrt{f(\mathbf{\lambda}_\mathbf{j})}}\\
		\leq\ &\frac{\sqrt{2}}{|N|\sqrt{|T|}}\sqrt{\sum\limits_{\mathbf{j} \in N}\frac{1}{f(\mathbf{\lambda}_\mathbf{j})}\sum\limits_{\mathbf{j} \in N}(F^2_T(\mathbf{j})+G^2_T(\mathbf{j}))}\\
		\asymp\ &\frac{1}{|T|^{\frac{3}{2}}}\sqrt{O(|N|)o_p\left(|T|^2\alpha^{-1}_T\right)} = o_p\left(\alpha^{-\frac{1}{2}}_T\right)	= o_p(1)\,.
		\end{align*}
		It yields the assertion in part (b)(i). 
		
		For the proof of (b)(ii), using Assumption \ref{assspecpostive}, similar to the argument above, we get
		\begin{equation*}
		\begin{aligned}
		&\left|\frac{1}{|N|}\sum\limits_{\mathbf{j} \in N}\frac{I_V(\mathbf{j})-I_{\widehat{V}}(\mathbf{j})}{f(\mathbf{\lambda}_\mathbf{j})}\right|\\
		=\ &\left|\frac{1}{|N|}\sum\limits_{\mathbf{j} \in N}\frac{-\frac{1}{|T|}(F^2_T(\mathbf{j})+G^2_T(\mathbf{j}))+\frac{2}{\sqrt{|T|}}(x_V(\mathbf{j})F_T(\mathbf{j})-y_V(\mathbf{j})G_T(\mathbf{j}))}{f(\mathbf{\lambda}_\mathbf{j})}\right|\\
		\leq\ &\left|\frac{1}{|N||T|}\sum\limits_{\mathbf{j} \in N}\frac{(F^2_T(\mathbf{j})+G^2_T(\mathbf{j}))}{f(\mathbf{\lambda}_\mathbf{j})}\right|+\left|\frac{2}{|N|\sqrt{|T|}}\sqrt{\sum\limits_{\mathbf{j} \in N}\frac{I_V(\mathbf{j})}{f(\mathbf{\lambda}_\mathbf{j})}\sum\limits_{\mathbf{j} \in N}(F^2_T(\mathbf{j})+G^2_T(\mathbf{j}))}\right|\\
		\leq\ &\left|\frac{2}{|T|^2}\frac{o_p\left(|T|^2\alpha^{-1}_T\right)}{c}\right|+\left|\frac{2}{|T|^{\frac{3}{2}}}\sqrt{\sum\limits_{\mathbf{j} \in N}\frac{I_V(\mathbf{j})}{f(\mathbf{\lambda}_\mathbf{j})}\sum\limits_{\mathbf{j} \in T}(F^2_T(\mathbf{j})+G^2_T(\mathbf{j}))}\right|\\
		=\ &o_p\left(\alpha^{-1}_T\right)+\left|\frac{2}{|T|^{\frac{3}{2}}}o_p\left(|T|^{\frac{3}{2}}\alpha^{-\frac{1}{2}}_T\right)\right| 
		= o_p\left(\alpha^{-1}_T\right)+o_p\left(\alpha^{-\frac{1}{2}}_T\right) 
		= o_p(1)\,.
		\end{aligned}
		\end{equation*}
		It yields the assertion in part (b)(ii).

		For the proof of (b)(iii), similarly, 
		\begin{align*}
		&\left|\frac{1}{|N|}\sum\limits_{\mathbf{j} \in N}\frac{(I_V(\mathbf{j})-I_{\widehat{V}}(\mathbf{j}))^2}{f^2(\mathbf{\lambda}_\mathbf{j})}\right|\\
		=\ &\left|\frac{1}{N}\sum\limits_{\mathbf{j} \in N}\frac{\left[-\frac{1}{|T|}(F^2_T(\mathbf{j})+G^2_T(\mathbf{j}))+\frac{2}{\sqrt{|T|}}(x_V(\mathbf{j})F_T(\mathbf{j})-y_V(\mathbf{j})G_T(\mathbf{j}))\right]^2}{f^2(\mathbf{\lambda}_\mathbf{j})}\right|\\
		\asymp\ & \left|\frac{1}{N}\sum\limits_{\mathbf{j} \in N}\frac{\frac{1}{|T|^2}(F^2_T(\mathbf{j})+G^2_T(\mathbf{j}))^2+\frac{4}{|T|}(x_V(\mathbf{j})F_T(\mathbf{j})-y_V(\mathbf{j})G_T(\mathbf{j}))^2}{f^2(\mathbf{\lambda}_\mathbf{j})}\right|\\
		\leq\ & \left|\frac{1}{|N||T|^2}\sum\limits_{\mathbf{j} \in N}\frac{(F^2_T(\mathbf{j})+G^2_T(\mathbf{j}))^2}{f^2(\mathbf{\lambda}_\mathbf{j})}\right|+\left|\frac{4}{|N||T|}\sum\limits_{\mathbf{j} \in N}\frac{(x_V(\mathbf{j})F_T(\mathbf{j})-y_V(\mathbf{j})G_T(\mathbf{j}))^2}{f^2(\mathbf{\lambda}_\mathbf{j})}\right|\\
		\asymp\ & \left|\frac{1}{|N||T|^2}\sum\limits_{\mathbf{j} \in N}\frac{(F^2_T(\mathbf{j})+G^2_T(\mathbf{j}))^2}{f^2(\mathbf{\lambda}_\mathbf{j})}\right|+\left|\frac{4}{|N||T|}\sum\limits_{\mathbf{j} \in N}\frac{x^2_V(\mathbf{j})F^2_T(\mathbf{j})+y^2_V(\mathbf{j})G^2_T(\mathbf{j})}{f^2(\mathbf{\lambda}_\mathbf{j})}\right| \displaybreak[3] \\
		=\ &\left|\frac{1}{|N||T|^2}o_p\left(|T|^4\alpha^{-2}_T\right)\right|+\left|\frac{4}{|N||T|}\sqrt{\left(\sum\limits_{\mathbf{j} \in N}\frac{x^4_V(\mathbf{j})+y^4_V(\mathbf{j})}{f^2(\mathbf{\lambda}_\mathbf{j})}\right)\left(\sum\limits_{\mathbf{j} \in N}\frac{F^4_T(\mathbf{j})+G^4_T(\mathbf{j})}{f^2(\mathbf{\lambda}_\mathbf{j})}\right)}\right|  
		\end{align*}
		\begin{align*}
        \leq\ &o_p\left(\frac{|T|}{\alpha^2_T}\right)+\left|\frac{4}{|N||T|}\sqrt{\left(\sum\limits_{\mathbf{j} \in N}\frac{I^2(\mathbf{j})}{f^2(\mathbf{\lambda}_\mathbf{j})}\right)\left(\sum\limits_{\mathbf{j} \in T}\frac{F_T^2(\mathbf{j})+G_T^2(\mathbf{j})}{f(\mathbf{\lambda}_\mathbf{j})}\right)^2}\right|\\
		=\ &o_p\left(\frac{|T|}{\alpha^2_T}\right)+\left|\frac{4}{|N||T|}\sqrt{C|N|+o_p(|N|)}o_p\left(|T|^2\alpha^{-1}_T\right)\right|
		= o_p\left(\frac{|T|}{\alpha^2_T}\right)+o_p\left(\frac{|T|^{\frac{1}{2}}}{\alpha_T}\right)\\
		=\ & o_p(1)\,.
		\end{align*}
		
		From this we get, by $a^2-b^2=-(a-b)^2+2a(a-b)$,
		\begin{equation*}
		\begin{aligned}
		&\frac{1}{|N|}\sum\limits_{\mathbf{j} \in N}\frac{I^2_V(\mathbf{j})-I^2_{\widehat{V}}(\mathbf{j})}{f^2(\mathbf{\lambda}_\mathbf{j})}\\
		=\ &-\frac{1}{|N|}\sum\limits_{\mathbf{j} \in N}\frac{(I_V(\mathbf{j})-I_{\widehat{V}}(\mathbf{j}))^2}{f^2(\mathbf{\lambda}_\mathbf{j})}+\frac{2}{|N|}\sum\limits_{\mathbf{j} \in N}\frac{I_V(\mathbf{j})(I_V(\mathbf{j})-I_{\widehat{V}}(\mathbf{j}))}{f^2(\mathbf{\lambda}_\mathbf{j})}\\
		\leq\ &\frac{1}{|N|}\sum\limits_{\mathbf{j} \in N}\frac{(I_V(\mathbf{j})-I_{\widehat{V}}(\mathbf{j}))^2}{f^2(\mathbf{\lambda}_\mathbf{j})}+\frac{2}{|N|}\sqrt{\left(\sum\limits_{\mathbf{j} \in N}\frac{I^2_V(\mathbf{j})}{f^2(\mathbf{\lambda}_\mathbf{j})}\right)\left(\sum\limits_{\mathbf{j} \in N}\frac{(I_V(\mathbf{j})-I_{\widehat{V}}(\mathbf{j}))^2}{f^2(\mathbf{\lambda}_\mathbf{j})}\right)}\\
		=\ &o_p\left(\frac{|T|}{\alpha^2_T}\right)+o_p\left(\frac{|T|^{\frac{1}{2}}}{\alpha_T}\right)+\frac{2}{|N|}\sqrt{(C|N|+o(|N|))\left[o_p\left(\frac{|N||T|}{\alpha^2_T}\right)+o_p\left(\frac{|N||T|^{\frac{1}{2}}}{\alpha_T}\right)\right]}\\
		=\ &o_p\left(\frac{|T|}{\alpha^2_T}\right)+o_p\left(\frac{|T|^{\frac{1}{2}}}{\alpha_T}\right)+o_p\left(\frac{|T|^{\frac{1}{4}}}{\alpha_T^{\frac{1}{2}}}\right)
		=\ o_p(1)\,,
		\end{aligned}
		\end{equation*}
		which yields the assertion in part (b)(iii). Next, we have that the following holds: 
		\begin{equation*}
		\begin{aligned}
		\frac{1}{|N|}\sum\limits_{\mathbf{j} \in N}\frac{I^q_V(\mathbf{j})-I^q_{\widehat{V}}(\mathbf{j})}{f^q(\mathbf{\lambda}_\mathbf{j})}
		\asymp\ &\left|\frac{1}{|N|}\sum\limits_{\mathbf{j} \in N}\frac{(I_V(\mathbf{j})-I_{\widehat{V}}(\mathbf{j}))^q}{f^q(\mathbf{\lambda}_\mathbf{j})}\right| 
		\asymp \left|\frac{1}{|N|} \left(\sum\limits_{\mathbf{j} \in N}\frac{(I_V(\mathbf{j})-I_{\widehat{V}}(\mathbf{j}))^2}{f^2(\mathbf{\lambda}_\mathbf{j})}\right)^{\frac{q}{2}}\right|\\
		=\ &\left|\frac{1}{|N|} \left(|N|o_p\left(\frac{|T|}{\alpha^2_T}\right)+|N|o_p\left(\frac{|T|^{\frac{1}{2}}}{\alpha_T}\right)\right)^{\frac{q}{2}}\right| 
		= o_p\left(\frac{|T|^{q-1}}{\alpha_T^{q}}\right) 
		= o_p(1)\,,
		\end{aligned}
		\end{equation*}
		which yields the assertion in part (b)(iv).
		 For the proof of (c)(i), we have
		 \begin{align*}
		 &\sup_{\mathbf{k} \in N}\left|\sum\limits_{\mathbf{j} \in\mathbb{Z}^2}p_{\mathbf{j},T}[x_V(\mathbf{k}+\mathbf{j})-x_{\widehat{V}}(\mathbf{k}+\mathbf{j})+y_V(\mathbf{k}+\mathbf{j})-y_{\widehat{V}}(\mathbf{k}+\mathbf{j})]\right|\\
		 =\ &\sup_{\mathbf{k} \in N}\left|\sum\limits_{\mathbf{j} \in\mathbb{Z}^2}p_{\mathbf{j},T}\left(\frac{1}{\sqrt{|T|}}(F_T(\mathbf{k}+\mathbf{j})-G_T(\mathbf{k}+\mathbf{j}))\right)\right| \displaybreak[3] \\
		 \leq\ &\sup_{\mathbf{k} \in N}\frac{\sqrt{2}}{\sqrt{|T|}}\sqrt{\sum\limits_{\mathbf{j} \in\mathbb{Z}^2}p_{\mathbf{j},T}\sum\limits_{\mathbf{j} \in\mathbb{Z}^2}p_{\mathbf{j},T}(F^2_T(\mathbf{k}+\mathbf{j})+G^2_T(\mathbf{k}+\mathbf{j}))}\\
		 \asymp \ &\sup_{\mathbf{k} \in N}\frac{\sqrt{2}}{\sqrt{|T|}}\sqrt{\frac{1}{|\mathbf{h}_T||T|}\sum\limits_{\mathbf{j} \in T}(F^2_T(\mathbf{k}+\mathbf{j})+G^2_T(\mathbf{k}+\mathbf{j}))}\\
		 \asymp \ &\sup_{\mathbf{k} \in N}\frac{\sqrt{2}}{\sqrt{|T|}}\frac{1}{\sqrt{|\mathbf{h}_T||T|}}\sqrt{o_p\left(|T|^2\alpha^{-1}_T\right)} 
		 = o_p\left((|\mathbf{h}_T|\alpha_T)^{-\frac{1}{2}}\right)\\
		 =\ &o_p\left((|\mathbf{h}_T||T|)^{-\frac{q-1}{2q}}\right) 
		 = o_p(1) \quad \text{as} \quad |\mathbf{h}_T||T| \rightarrow \infty \,,
		 \end{align*}
		 which yields the assertion in part (c)(i). By the same arguments as for part (a), we get
		\begin{equation*}
		\sup_{\mathbf{k} \in N}\left|\sum\limits_{\mathbf{j} \in\mathbb{Z}^2}p_{\mathbf{j},T}(I_V(\mathbf{k}+\mathbf{j})-I_{\widehat{V}}(\mathbf{k}+\mathbf{j}))\right|=o_p\left((|\mathbf{h}_T|\alpha_T)^{-1}\right)+o_p\left((|\mathbf{h}_T|\alpha_T)^{-\frac{1}{2}}\right)=o_p(1)\,,
		\end{equation*}
		which yields the assertion in part (c)(ii). Similar to the proof for part (b)(iii), we get
		\begin{align*}
		&\sup_{\mathbf{k} \in N}\sum\limits_{\mathbf{j} \in\mathbb{Z}^2}p_{\mathbf{j},T}(I_V(\mathbf{k}+\mathbf{j})-I_{\widehat{V}}(\mathbf{k}+\mathbf{j}))^2\\
		\asymp\ &\sup_{\mathbf{k} \in N}\sum\limits_{\mathbf{j} \in\mathbb{Z}^2}p_{\mathbf{j},T}\left[\frac{1}{|T|^2}(F^2_T(\mathbf{j})+G^2_T(\mathbf{j}))^2+\frac{4}{|T|}(x_V(\mathbf{j})F_T(\mathbf{j})-y_V(\mathbf{j})G_T(\mathbf{j}))^2\right]\\
		\asymp\ &\frac{1}{|T|^2}\frac{1}{|\mathbf{h}_T||T|}\sum\limits_{\mathbf{j} \in T}[F^2_T(\mathbf{j})+G^2_T(\mathbf{j})]^2+\frac{4}{|T|}\sum\limits_{\mathbf{j} \in\mathbb{Z}^2}p_{\mathbf{j},T}[x^2_V(\mathbf{j})F^2_T(\mathbf{j})+y^2_V(\mathbf{j})G^2_T(\mathbf{j})]\\
		\asymp\ &o_p\left(\frac{|T|}{|\mathbf{h}_T|\alpha^2_T}\right)+\frac{4}{|T|}\sqrt{\sum\limits_{\mathbf{j} \in\mathbb{Z}^2}p_{\mathbf{j},T}(x^4_V(\mathbf{j})+y^4_V(\mathbf{j}))\sum\limits_{\mathbf{j} \in\mathbb{Z}^2}p_{\mathbf{j},T}(F^4_T(\mathbf{j})+G^4_T(\mathbf{j}))} \displaybreak[3] \\
		\asymp\ &o_p\left(\frac{|T|}{|\mathbf{h}_T|\alpha^2_T}\right)+\frac{4}{|T|}\sqrt{\sum\limits_{\mathbf{j} \in\mathbb{Z}^2}p_{\mathbf{j},T}I^2(\mathbf{j})\frac{1}{|\mathbf{h}_T||T|}\sum\limits_{\mathbf{j} \in T}(F^4_T(\mathbf{j})+G^4_T(\mathbf{j}))}\\
		\asymp\ &o_p\left(\frac{|T|}{|\mathbf{h}_T|\alpha^2_T}\right)+\frac{4}{|T|}\sqrt{(C+o_p(1))\frac{1}{|\mathbf{h}_T||T|}\left[\sum\limits_{\mathbf{j} \in T}(F^2_T(\mathbf{j})+G^2_T(\mathbf{j}))\right]^2}\\
		\asymp\ &o_p\left(\frac{|T|}{|\mathbf{h}_T|\alpha^2_T}\right)+\frac{4}{|T|}\frac{1}{\sqrt{|\mathbf{h}_T|}\sqrt{|T|}}o_p\left(|T|^2\alpha^{-1}_T\right) 
		=  o_p\left(\frac{|T|}{|\mathbf{h}_T|\alpha^2_T}\right)+o_p\left(\frac{|T|^{\frac{1}{2}}}{|\mathbf{h}_T|^{\frac{1}{2}}\alpha_T}\right) 
		=  o_p(1)\,,
		\end{align*}
		which yields 
		\begin{equation*}
		\begin{aligned}
		&\sup_{\mathbf{k} \in N}\sum\limits_{\mathbf{j} \in\mathbb{Z}^2}p_{\mathbf{j},T}(I^2_V(\mathbf{k}+\mathbf{j})-I^2_{\widehat{V}}(\mathbf{k}+\mathbf{j}))\\
		=\ &\sup_{\mathbf{k} \in N}\sum\limits_{\mathbf{j} \in\mathbb{Z}^2}p_{\mathbf{j},T}[-(I_V(\mathbf{k}+\mathbf{j})-I_{\widehat{V}}(\mathbf{k}+\mathbf{j}))^2+2I_V(\mathbf{k}+\mathbf{j})(I_V(\mathbf{k}+\mathbf{j})-I_{\widehat{V}}(\mathbf{k}+\mathbf{j}))]\\
		=\ &o_p\left(\frac{|T|}{|\mathbf{h}_T|\alpha^2_T}\right)+o_p\left(\frac{|T|^{\frac{1}{2}}}{|\mathbf{h}_T|^{\frac{1}{2}}\alpha_T}\right) 
		= o_p(1)\,,
		\end{aligned}
		\end{equation*}
		which yields the assertion in part (c)(iii). Finally, we have
		\begin{equation*}
		\begin{aligned}
		\sup_{\mathbf{k} \in N}\sum\limits_{\mathbf{j} \in\mathbb{Z}^2}p_{\mathbf{j},T}(I^q_V(\mathbf{k}+\mathbf{j})-I^q_{\widehat{V}}(\mathbf{k}+\mathbf{j}))
		\asymp\ &\sup_{\mathbf{k} \in N}\sum\limits_{\mathbf{j} \in\mathbb{Z}^2}p_{\mathbf{j},T}(I_V(\mathbf{k}+\mathbf{j})-I_{\widehat{V}}(\mathbf{k}+\mathbf{j}))^q\\
		\asymp\ &\sup_{\mathbf{k} \in N}(|\mathbf{h}_T||T|)^{q-1}\left|\sum\limits_{\mathbf{j} \in\mathbb{Z}^2}p_{\mathbf{j},T}(I_V(\mathbf{k}+\mathbf{j})-I_{\widehat{V}}(\mathbf{k}+\mathbf{j}))\right|^q	\\ 
		=\ & o_p\left(\frac{|T|^{q-1}}{|\mathbf{h}_T|\alpha_T^q}\right)
		=o_p(1)\,,
		\end{aligned}
		\end{equation*}
		which yields the assertion in part (c)(iv).
		
		
	
\end{proof}

\bibliographystyle{rss} 
\bibliography{myRef}

\end{document}